\theoremstyle{plain}
\newtheorem{thm}{Theorem}[section]
\newtheorem{prop}[thm]{Proposition}
\newtheorem{cor}[thm]{Corollary}
\newtheorem{lem}[thm]{Lemma}
\theoremstyle{definition}
\newtheorem{defn}[thm]{Definition}
\begin{document}

\title{On Lattices over Valuation Rings of Arbitrary Rank}

\author{Shaul Zemel\thanks{This work was supported by the Minerva Fellowship
(Max-Planck-Gesellschaft).}}


\maketitle

\section*{Introduction}

A well-known result states that for any odd prime $p$, the isomorphism classes
of $p$-adic lattices correspond to the possible symbols of the form
$\prod_{e=0}^{N}(p^{e})^{\varepsilon_{e}n_{e}}$, where
$\varepsilon_{e}\in\{\pm1\}$ and $n_{e}\in\mathbb{N}$ for every $e$. Moreover,
the Witt Cancellation Theorem holds for $p$-adic lattices, as is shown in
\cite{[J1]}. The same assertions hold for lattices over the ring of integers
$\mathcal{O}$ in a finite field extension of $\mathbb{Q}_{p}$. In all
references known to the author (e.g., \cite{[J1]}, \cite{[J3]}, \cite{[C]},
\cite{[O]}, etc.) the proof is based on the $p$-adic valuation being
\emph{discrete}, or at least of rank 1 (see \cite{[D]}). Recall that a valuation
has rank 1 if the value group can be embedded in $(\mathbb{R},+)$ as an ordered
group. The first aim of this paper is to generalize these assertions to lattices
over any 2-Henselian valuation ring with a finite residue field whose
characteristic is not 2. Indeed, a very simple variation of the short argument
appearing in Section 4 of Chapter 1 of \cite{[MH]} suffices to prove this
result (see also Section 3 in Chapter 8 of \cite{[C]} for the case of lattices
over the ring $\mathbb{Z}_{p}$ of $p$-adic numbers).

We remark that several texts deal with non-unimodular lattices (also in the
Hermitian setting) under various degrees of generality (see, e.g., \cite{[BFF]}
or \cite{[BFFM]}). However, these references use abstract tools such as
quadratic forms over Hermitian categories. The book \cite{[Knu]} also deals with
related topics, but mostly in the quadratic setting, while we do not assume that
our bilinear form comes from a quadratic form (in the sense of \cite{[A]} in
characteristic 2---see more details in Section \ref{Inv}). The book \cite{[Kne]}
considers bilinear forms over valuation rings, but treats them up to isomorphism
over the quotient field rather than just over the valuation ring itself. It
seems that our results are independent of the results appearing in \cite{[Knu]}
and \cite{[Kne]}. In any case our proofs are very concrete and simple, and show
how the desired assertions can be obtained solely from 2-Henselianity.

Next we consider unimodular rank 2 lattices, which contain the only non-trivial
indecomposable lattices over valuation rings (up to multiplying the bilinear
form by a scalar). We focus on the residue characteristic 2 case, where indeed
such indecomposable lattices exist. We define an invariant for isomorphism
classes of these lattices, which in some sense generalizes the Arf invariant
defined in \cite{[A]}. We then show how two invariants characterize the
isomorphism classes of such lattices, in case they contain a primitive
element whose norm is divisible by 2. We conclude by giving some relations
between different Jordan decompositions (in residue characteristic 2) which
yield isomorphic lattices, taking a Jordan decomposition of a lattice to a
``more canonical'' one.

In Section \ref{Jordan} we prove the existence of Jordan decompositions over
any valuation ring, and show that an approximated isomorphism between lattices
over a 2-Henselian valuation ring is a twist of a true isomorphism. Section
\ref{Unique} proves the ``uniqueness of the symbol'' result. In Section
\ref{Uni2} we present the conventions for unimodular rank 2 lattices, with their
\emph{generalized Arf invariants}. Section \ref{Inv} considers isomorphisms
between unimodular rank 2 lattices containing a primitive vector of norm
divisible by 2, and shows that the \emph{fine Arf invariant} and the \emph{class
of minimal norms} characterize the isomorphism class of such lattices. Finally,
in Section \ref{Ch2Can} we define when one Jordan decomposition in residue
characteristic 2 is ``more canonical'' than another Jordan decomposition, and
present certain transformations of Jordan decompositions which makes them ``more
canonical''.

I am thankful to U. First for reading this manuscript and providing several
enlightening insights. Many thanks are due to the anonymous referee, whose
suggestions have helped to greatly improve many aspects of this paper, as well
as the clarity of several arguments.

\section{Jordan Decompositions \label{Jordan}}

Let $R$ be a commutative ring, and let $M$ be a finite rank free $R$-module with
a symmetric bilinear form. We denote the bilinear form by $(\cdot,\cdot):M
\times M \to R$, and for $x \in M$ we write $x^{2}$ for $(x,x)$ (this element
of $R$ is called the \emph{norm} of $x$). The bilinear form maps $M$ to the dual
module $M^{*}=Hom_{R}(M,R)$, and we call the bilinear form \emph{non-degenerate}
if this map $M \to M^{*}$ is injective. In this case we call $M$ an
\emph{$R$-lattice}. Note that our non-degeneracy condition is weaker than the
inner product condition considered in \cite{[MH]}, where the map $M \to M^{*}$
is required to be bijective. In case this map is bijective we call the lattice
$M$ \emph{unimodular}. We consider only the case where $R$ is an integral
domain, so we assume this from now on. In this case we can extend scalars to the
field of fractions $\mathbb{K}$ of $R$, and obtain a $\mathbb{K}$-lattice, or
equivalently an inner product space over $\mathbb{K}$. Then non-degeneracy is
equivalent to requiring a non-zero determinant for the Gram matrix of the
bilinear form using any basis for $M$.

Some authors (e.g., \cite{[MH]}) assume that a module underlying lattice is
projective (and not necessarily free). However, our main interest here is the
case where $R$ is a valuation ring, hence a local ring, where these two
conditions are equivalent.

Elements $x$ and $y$ of a lattice $M$ are called \emph{orthogonal} and denoted
$x \perp y$ if $(x,y)=0$. For a submodule $N$ of $M$ we denote $N^{\perp}$ its
\emph{orthogonal complement}, the submodule of $M$ consisting of those $x \in M$
such that $x \perp y$ for all $y\in N$. Our non-degeneracy condition is
equivalent to the assertion that $M^{\perp}=\{0\}$. A direct sum of lattices is
\emph{orthogonal} if every two elements from different lattices are orthogonal.
Then an orthogonal direct sum of bilinear form modules is a lattice (i.e.,
non-degenerate) if and only if all the summands are lattices. Two lattices $M$
and $N$ are called \emph{isomorphic}, denoted $M \cong N$, if there exists an
$R$-module isomorphism between them which preserves the bilinear form. A
non-degenerate lattice $M$ has an orthogonal basis if and only if it is
isomorphic to a direct sum of rank 1 lattices. For a lattice $M$ and an element
$0 \neq a \in R$, we denote $M(a)$ the lattice obtained from $M$ by multiplying
the bilinear form by $a$. This lattice is non-degenerate if and only if $M$ has
this property, and a basis for the module $M$ is orthogonal for the lattice $M$
if and only if it is orthogonal for $M(a)$.

We denote the group of invertible elements in the ring $R$ by $R^{*}$, and the
multiplicative group of $\mathbb{K}$ by $\mathbb{K}^{*}$. The determinant of a
Gram matrix of a basis of $M$ is independent of the choice of basis up to
elements of $(R^{*})^{2}$. Hence a statement of the form ``the determinant of
the bilinear form on $M$ divides an element of $R$'' is well-defined. We note
that $M$ is unimodular (hence non-degenerate in the sense of \cite{[MH]}) if
and only if its determinant is in $R^{*}$.

An element $x$ of a lattice $M$ is called \emph{primitive} if the module $M/Rx$
is torsion-free. This condition is equivalent to $x$ being an element of some
basis of $M$, and it is preserved under multiplication from $R^{*}$. Note that
this notion depends only on the structure of $M$ as an $R$-module, and not on
the bilinear form on $M$.

\smallskip

We call an $R$-lattice $M$ \emph{uni-valued} if it can be written as $L(\sigma)$
with $L$ unimodular and $\sigma \in R$. This notion (at least over valuation
rings) is closely related to the notion of $\mathfrak{a}$-unimodularity
considered, for example, in \cite{[O]}. A \emph{Jordan decomposition} of a
lattice $M$ is a presentation of $M$ as $\bigoplus_{k=1}^{t}M_{k}$ with $M_{k}$
uni-valued, such that if $M_{k}=L_{k}(\sigma_{k})$ with $L_{k}$ unimodular then
$v(\sigma_{k}) \neq v(\sigma_{l})$ wherever $k \neq l$. Note that if $M$ and $N$
are uni-valued with the same $\sigma$ then so is $M \oplus N$. Hence
the condition about the $\sigma_{k}$ having different valuations can be easily
achieved by successive combination of two uni-valued lattices with the same
$v(\sigma)$ into one.

We now prove the existence of Jordan decompositions for lattices over arbitrary
valuation rings. We follow closely the arguments in Chapter 1 of \cite{[MH]}
(where bilinear forms over fields are considered) and \cite{[J1]} or Chapter 8
of \cite{[C]} (which considers the $p$-adic numbers).

Let $N$ be a (free) submodule of $M$ which is non-degenerate of rank $r$. First
we prove a simplified version of Lemma 1 of \cite{[J1]}:

\begin{lem}
Let $e_{i}$, $1 \leq i \leq r$ be a basis for $N$, and let $A \in M_{r}(R)$ be
the matrix whose $ij$-entry is $(e_{i},e_{j})$. For any $x \in M$ and $1 \leq i
\leq r$, denote by $A_{i,x}$ the matrix whose $ij$-entry (with the same $i$) is
$(x,e_{j})$ and all the other entries coincide with those of $A$. If $\det A$
divides $\det A_{i,x}$ in $R$ for any $i$ and $x$ then $M$ decomposes as $N
\oplus N^{\perp}$ (as lattices). \label{sublat}
\end{lem}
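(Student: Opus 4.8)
The plan is to produce, for every $x \in M$, an explicit orthogonal projection of $x$ onto $N$ whose coefficients lie in $R$; the divisibility hypothesis is exactly what guarantees integrality of these coefficients. First I would pass to the field of fractions $\mathbb{K}$ and work in $N\otimes_{R}\mathbb{K}$, where the non-degeneracy of $N$ (equivalently $\det A\neq 0$) ensures that the system determining the projection has a unique solution. Concretely, for a fixed $x$ I seek $n=\sum_{k=1}^{r}c_{k}e_{k}$ with $(x-n,e_{j})=0$ for every $j$, i.e.\ $\sum_{k}c_{k}(e_{k},e_{j})=(x,e_{j})$; since $A$ is symmetric this is the linear system $Ac=b$ with $b_{j}=(x,e_{j})$.

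Next I would solve this system by Cramer's rule, giving $c_{i}=\det(A^{(i)})/\det A$, where $A^{(i)}$ denotes $A$ with its $i$-th column replaced by the vector $b$. The key observation is that, because $A$ is symmetric, $\det(A^{(i)})=\det A_{i,x}$: transposing $A_{i,x}$ (which is $A$ with its $i$-th \emph{row} replaced by $b$) turns the replaced row into the $i$-th replaced column while fixing $A^{T}=A$. Hence $c_{i}=\det A_{i,x}/\det A$, and the hypothesis $\det A \mid \det A_{i,x}$ forces $c_{i}\in R$ for every $i$. Therefore $n=\sum_{k}c_{k}e_{k}$ actually lies in $N$, and by construction $x-n\in N^{\perp}$. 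As $x$ was arbitrary, $M=N+N^{\perp}$.

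It remains to check that this sum is an orthogonal direct sum of lattices. If $y\in N\cap N^{\perp}$ then $y\in N$ is orthogonal to every $e_{j}$ and hence to all of $N$; since $N$ is non-degenerate, $y=0$, so $M=N\oplus N^{\perp}$ as $R$-modules. The two summands are mutually orthogonal by the definition of $N^{\perp}$, making this an orthogonal decomposition; $N$ is a lattice by hypothesis, and by the earlier observation that an orthogonal direct sum is non-degenerate exactly when all of its summands are, the summand $N^{\perp}$ inherits non-degeneracy from $M$. The only genuinely delicate point is the Cramer's-rule identification in the second step---recognizing that the determinants $\det A_{i,x}$ appearing in the hypothesis are precisely the numerators of the projection coefficients; once this is in place, the divisibility condition translates directly into integrality of the $c_{i}$, and everything else is routine.
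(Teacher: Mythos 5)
Your proof is correct and follows essentially the same route as the paper's: set up the linear system for the orthogonal projection of $x$ onto $N$, solve it over $\mathbb{K}$ by Cramer's rule, and use the divisibility hypothesis to conclude the coefficients lie in $R$, so that $x=n+(x-n)\in N\oplus N^{\perp}$. Your extra care in identifying the row-replaced determinant $\det A_{i,x}$ with the column-replaced Cramer numerator via the symmetry of $A$, and in noting that $N^{\perp}$ inherits non-degeneracy, only makes explicit what the paper leaves implicit.
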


\begin{proof}
Since $N$ is non-degenerate, we have $N \cap N^{\perp}=\{0\}$, hence $N \oplus
N^{\perp}$ is a sub-lattice of $M$. We need to show equality. Given $x \in M$,
we claim that there exists some $y=\sum_{i=1}^{r}a_{i}e_{i} \in N$ (with $a_{i}
\in R$) such that $(x,e_{j})=(y,e_{j})$ for any $1 \leq j \leq r$. Indeed, these
equalities (one for each $1 \leq j \leq r$) yield a system of linear equations
for the coefficients $a_{i}$, which we can solve over $\mathbb{K}$ since the
corresponding matrix is $A$ (hence of non-zero determinant). But the solution
is given using Cramer's formula, i.e., $a_{i}=\frac{\det A_{i,x}}{\det
A}\in\mathbb{K}$, and our assumptions imply that these coefficients are in $R$.
Now, since $y \in N$ and our assumption on $y$ implies $x-y \in N^{\perp}$, we
obtain that $x=y+(x-y) \in N \oplus N^{\perp}$, as desired. This proves the
lemma.
\end{proof}

Assume now that $R$ is a \emph{valuation ring}. This means that there is a
totally ordered (additive) group $\Gamma$ (the \emph{value group}) and a
surjective homomorphism $v:\mathbb{K}^{*}\to\Gamma$ (called the
\emph{valuation}) satisfying $v(x+y)\geq\min\{v(x),v(y)\}$ for every $x$ and $y$
in $\mathbb{K}$. Here and throughout, we extend $v$ to a function on
$\mathbb{K}$ by setting $v(0)=\infty$ and considering it larger than any element
of $\Gamma$. The statement that $R$ is the valuation ring of $v$ means that $R$
consists precisely of those elements $x\in\mathbb{K}$ such that $v(x)\geq0$
(with 0 here is the trivial element of $\Gamma$). For any $\gamma\in\Gamma$ we
define $I_{\gamma}=\big\{x\in\mathbb{K}^{*}\big|v(x)>\gamma\big\}$. It is a
(proper) ideal in $R$ if $\gamma\geq0$. In particular, $I_{0}$ is the unique
maximal ideal of $R$. We remark that an ideal of the sort
$\big\{x\in\mathbb{K}^{*}\big|v(x)\geq\gamma\big\}$ is just the principal
(perhaps fractional, if $\gamma<0$) ideal $\sigma R$ with $v(\sigma)=\gamma$,
hence requires no further notation.

In many references (e.g., \cite{[D]}), the ordered group $\Gamma$ is considered
as a subgroup of the additive group of $\mathbb{R}$. Such valuations are called
\emph{of rank 1}. In particular, the \emph{discrete} valuations, in which
$\Gamma\cong\mathbb{Z}$ (covering the case of the $p$-adic numbers and their
finite extensions) have rank 1. However, we pose no restrictions on $v$ or
$\Gamma$ in this paper, hence the rank is arbitrary.

For any $R$-lattice $M$ we define the \emph{valuation of $M$}, denoted $v(M)$,
to be $\min\{v(x,y)|x,y \in M\}$, where we use the shorthand $v(x,y)$ for
$v\big((x,y)\big)$. This value equals $\min_{i,j}v(e_{i},e_{j})$ wherever
$e_{i}$, $1 \leq i \leq r$ is a basis for $R$, since $(x,y)$ lies in the
$R$-module generated by these elements for any $x$ and $y$ in $R$. In
particular $v(M)$ is well-defined. If $M$ is uni-valued, then by writing
$M=L(\sigma)$ with $L$ unimodular we have $v(M)=v(\sigma)$. In a Jordan
decomposition $\bigoplus_{k=1}^{t}M_{k}$ of a lattice $M$ the condition on the
elements $\sigma_{k} \in R$ distinguishing the uni-valued components from being
unimodular reduces to the assertion that these components have different
valuations. We can thus assume, by changing the order if necessary, that
$v(M_{k})<v(M_{k+1})$ for every $1 \leq k<t$. Using these definitions, Lemma
\ref{sublat}
yields

\begin{prop}
Any lattice $M$ over a valuation ring $R$ admits a Jordan decomposition.
\label{decom}
\end{prop}

\begin{proof}
We apply induction on the rank of $M$. For rank 1 lattices the assertion is
trivial. Let $v=v(M)$. Assume first that there is an element $x \in M$ such
that  $v(x^{2})=v$. Then $N=Rx$ satisfies the condition of Lemma \ref{sublat},
so that we can write $M=N \oplus N^{\perp}$. On the other hand, if no such $x$
exists, then we take $x$ and $y$ in $M$ such that $v(x,y)=v$, and our assumption
implies $v(x^{2})>v$ and $v(y^{2})>v$. We claim that $x$ and $y$ are linearly
independent over $R$. Indeed, the equality $ax+by=0$ implies $ax^{2}+b(x,y)=0$
and $a(x,y)+by^{2}=0$, hence $a \in bI_{0}$ and $b \in aI_{0}$, which is
possible only if $a=b=0$. Moreover, $N=Rx \oplus Ry$ satisfies the condition of
Lemma \ref{sublat}. Indeed, the valuation of the
determinant is $2v$, while the valuation of any other $2\times2$ determinant
with entries in the image of the bilinear form has valuation at least $2v$. Thus
also here $M=N \oplus N^{\perp}$. It remains to verify that in both cases $N$ is
uni-valued. To see this, observe that any rank 1 lattice is uni-valued, and in
the second case dividing the bilinear form on $N$ by $(x,y)$ gives a unimodular
lattice. The induction hypothesis allows us to decompose $N^{\perp}$ into
uni-valued lattices, and adding $N$ to the component of valuation $v$ in
$N^{\perp}$ (if it exists) completes the proof of the proposition.
\end{proof}

The decomposition of Proposition \ref{decom} is called a \emph{Jordan splitting}
in \cite{[O]}. From the proof of Proposition \ref{decom} we deduce

\begin{cor}
If $2 \in R^{*}$ then the components $M_{k}$ have orthogonal bases. If $2 \notin
R^{*}$ then either $M_{k}$ has an orthogonal basis or it admits an orthogonal
decomposition into lattices of rank 2 each having a basis $\{x,y\}$ such that
$v(x^{2})$ and $v(y^{2})$ are both strictly larger than $v(x,y)$. \label{diag}
\end{cor}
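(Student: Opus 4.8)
The plan is to reduce to the unimodular case and then induct on the rank. Each component $M_k$ is uni-valued, say $M_k=L(\sigma)$ with $L$ unimodular; since rescaling the form by $\sigma$ shifts every value $v(x,y)$ by the constant $v(\sigma)$, it preserves both the property of a basis being orthogonal and the strict inequalities $v(x^2)>v(x,y)$ and $v(y^2)>v(x,y)$. Hence it suffices to prove the analogous statement for a unimodular lattice $L$: if $2\in R^*$ then $L$ has an orthogonal basis, while if $2\notin R^*$ then $L$ either has an orthogonal basis or is an orthogonal sum of rank-$2$ blocks with bases $\{x,y\}$ as described. Both clauses will be established simultaneously by induction on the rank, the dichotomy at each stage being whether $L$ contains a vector $x$ with $x^2\in R^*$.

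If such an $x$ exists, then $N=Rx$ is unimodular of rank $1$, so $\det A=x^2\in R^*$ trivially divides every $\det A_{1,y}=(y,x)$ and Lemma \ref{sublat} splits $L=Rx\oplus(Rx)^\perp$ with $(Rx)^\perp$ again unimodular. If no such $x$ exists, i.e. $x^2\in I_0$ for all $x$, I would reduce the Gram matrix modulo $I_0$: it is invertible by unimodularity and has zero diagonal, so some off-diagonal entry $(e_i,e_j)$ is a unit; taking $x=e_i$, $y=e_j$ gives a rank-$2$ sublattice $N$ whose determinant $x^2y^2-(x,y)^2$ is a unit (as $x^2y^2\in I_0$ while $(x,y)^2\in R^*$), which again splits off by Lemma \ref{sublat}, with $\{x,y\}$ satisfying $v(x^2),v(y^2)>0=v(x,y)$ and complement unimodular with all norms still in $I_0$. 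This settles the $2\in R^*$ assertion at once, since there the second case is vacuous: polarization gives $2(x,y)=(x+y)^2-x^2-y^2\in I_0$, so if all norms lay in $I_0$ then all inner products would too, contradicting unimodularity; thus a unit-norm vector always exists and the recursion yields an orthogonal basis.

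The delicate point, and the main obstacle, is the $2\notin R^*$ case in which $L$ \emph{does} contain a unit-norm vector but the complement $(Rx)^\perp$ does not: by the second case applied to it (the induction hypothesis), $(Rx)^\perp$ is then a pure sum of the rank-$2$ blocks above, so a naive recursion would leave those blocks undiagonalized. To repair this I would not fix the choice of $x$. Given one such block $B=\langle f,g\rangle$ with $(f,g)\in R^*$ (rescaled to $1$) and $f^2,g^2\in I_0$, I would replace $x$ by $x'=x+f$, which still has unit norm $x^2+f^2$, and check that $x-x^2g$ lies in $(Rx')^\perp$, has unit norm $x^2(1+x^2g^2)$, and is orthogonal to all the remaining blocks. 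Thus one can always arrange the split-off unit vector so that its orthogonal complement again contains a unit-norm vector, and the induction then produces a full orthogonal basis. The crux is therefore this explicit rank-$3$ computation, showing that a unit vector absorbed into an even block liberates a new unit vector in the complement; everything else is a routine application of Lemma \ref{sublat} together with the bookkeeping already present in the proof of Proposition \ref{decom}.
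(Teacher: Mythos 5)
Your proof is correct and follows essentially the same route as the paper: the dichotomy on whether a minimal-valuation norm exists, the polarization identity $(x+y)^{2}=x^{2}+y^{2}+2(x,y)$ for the case $2\in R^{*}$, and the resolution of the mixed case by an explicit rank-3 computation merging a unit-norm vector with an even block (your vectors $x+f$ and $x-x^{2}g$ are, up to sign and the specialization $t=1$, the first two vectors $tx+z$ and $(z^{2})y-t(x,y)z$ of the paper's explicit orthogonal basis, with the third obtained implicitly via Lemma \ref{sublat} rather than written out). The only cosmetic difference is that you first rescale to the unimodular case, whereas the paper works directly at valuation $v$.
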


\begin{proof}
First we show that if $2 \in R^{*}$ then there exists an element $x \in M$ with
$v(x^{2})=v(M)$. Indeed, if $v(x,y)=v$ and $v(2)=0$ while $v(x^{2})>v$ and
$v(y^{2})>v$ then $(x+y)^{2}=x^{2}+y^{2}+2(x,y)$ has valuation $v$. In view of
the proof of Proposition \ref{decom}, this proves the corollary in this case.
Assume now $2 \notin R^{*}$. The proof of Proposition \ref{decom} shows that
$M_{k}$ can be written as an orthogonal direct sum of rank 1 lattices and rank
2 lattices of the sort described above. It remains to show that if a lattice
of rank 1 appears in $M_{k}$ then $M_{k}$ has an orthogonal basis. It suffices
(by induction) to prove that if $N$ is the direct sum of one rank 1 lattice and
one rank 2 lattice of this form having the same valuation then $N$ has an
orthogonal basis. Let now $N=Rx \oplus Ry \oplus Rz$ be a lattice in which $x
\perp z$, $y \perp z$, and $(x,y)$ and $z^{2}$ have common (finite) valuation
$v$ while $v(x^{2})>v$ and $v(y^{2})>v$. One checks directly that the three
elements $tx+z$, $(z^{2})y-t(x,y)z$, and
$(y^{2}z^{2}+t^{2}(x,y)^{2})x-(x,y)(t^{2}x^{2}+z^{2})y-t(x^{2}y^{2}-(x,y)^{2})z$
form an orthogonal basis for $N$ for any $t \in R^{*}$. This proves the
corollary.
\end{proof}

\medskip

Recall that a valuation ring $R$ is called \emph{Henselian} if Hensel's Lemma
holds in $R$, namely if given three monic polynomials $f$, $g_{0}$, and $h_{0}$
in the polynomial ring $R[x]$ such that $f-g_{0}h_{0}$ lies in $I_{0}[X]$ (i.e.,
all the coefficients of that difference have positive valuation) and the
resultant of $g_{0}$ and $h_{0}$ is in $R^{*}$ then there exist monic
polynomials $g$ and $h$ in $R[x]$ such that $f=gh$ and $g-g_{0}$ and $h-h_{0}$
are in $I_{0}[X]$. In particular, taking $g$ to be of degree 1 renders this
statement equivalent to the assertion that if $a \in R$ and monic $f \in R[x]$
satisfy $v(f(a))>0$ and $v(f'(a))=0$ then $f$ has a root $b \in R$ with $b-a \in
I_{0}$. We call a valuation ring $R$ \emph{2-Henselian} if the last assertion
holds for any polynomial $f$ of degree 2, and if $2\neq0$ in $R$. We note that a
more general assertion holds in a Henselian ring, stating that for $f \in R[x]$
(not necessarily monic!) and an element $a \in R$ such that $v(f(a))>2v(f'(a))$
there exists a root $b$ of $f$ with $b-a \in I_{v(f'(a))}$. Indeed, following
the proof of the equivalence of $(e)$ and $(f)$ in Theorem 18.1.2 of \cite{[E]},
we use the Taylor expansion to write $f\big(a-\frac{f(a)}{f'(a)}y\big)$ as
$f(a)\big(1-y+\frac{f(a)}{f'(a)^{2}}y^{2}g(y)\big)$ for some polynomial $g$, and
we present this expression as $f(a)y^{d}h\big(\frac{1}{y}\big)$ where $d$ is the
degree of $f$ and $h(x)$ has the form $x^{d}-x^{d-1}+\sum_{i=0}^{d-2}c_{i}x^{i}$
with $c_{i} \in I_{0}$. Since $v(h(1))>0$ and $v(h'(1))=0$ there exists a root
$\lambda\in1+I_{0} \subseteq R^{*}$ of $h$, so that the required root of $f$ is
$b=a-\frac{f(a)}{f'(a)}\cdot\frac{1}{\lambda}$. Since $\lambda \in R^{*}$ we
know that $v(b-a)=v\big(\frac{f(a)}{f'(a)}\big)$. In a 2-Henselian valuation
ring this more  general condition holds for any $f \in R[x]$ of degree 2. As
Theorem 7 in Chapter 2 of \cite{[Sch]} shows that every complete valuation ring
is Henselian (and in particular 2-Henselian), our results hold for a variety of
interesting valuation rings.

\smallskip

The following property of 2-Henselian rings will be used below.

\begin{lem}
An element of $R$ of the form $1+y$ with $v(y)>2v(2)$ lies in $(R^{*})^{2}$, and
has a unique square root $1+z$ such that $v(z)>v(2)$. Moreover, the equality
$v(y)=v(z)+v(2)$ holds in this case. Let $A$, $B$, and $C$ be three elements of
$\mathbb{K}$ such that $v(AC)>2v(B)$ (hence $B\neq0$). Then the equation
$At^{2}+Bt+C=0$ has one solution in $\mathbb{K}$ with valuation
$v\big(\frac{C}{B}\big)$ (so that this solution is in $R$ if $v(C)
\geq v(B)$). If $A\neq0$ then the other solution has valuation
$v\big(\frac{B}{A}\big)$, which is strictly smaller. \label{quadeq}
\end{lem}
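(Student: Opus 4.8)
The plan is to prove the lemma in two halves. The first half concerns square roots of elements close to $1$; the second half applies this, together with the general Hensel-type root-finding statement already established in the excerpt, to analyze the quadratic $At^2+Bt+C=0$.

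For the first assertion, I would consider the polynomial $f(x)=x^2-(1+y)$ with $v(y)>2v(2)$, and apply the generalized Henselian root-finding criterion stated just before the lemma (valid for degree-$2$ polynomials in a $2$-Henselian ring). Taking the approximate root $a=1$, I compute $f(1)=-y$ so $v(f(1))=v(y)$, and $f'(1)=2$ so $v(f'(1))=v(2)$; the hypothesis $v(y)>2v(2)$ is precisely $v(f(a))>2v(f'(a))$, so there exists a genuine root $1+z$ with $v(z)=v\bigl(\tfrac{f(1)}{f'(1)}\bigr)=v(y)-v(2)$. This immediately gives $v(y)=v(z)+v(2)$ and, since $v(y)>2v(2)$ forces $v(z)>v(2)$, it shows $1+z\in R^*$ and hence $(1+y)\in(R^*)^2$. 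For \emph{uniqueness} of the root with $v(z)>v(2)$, I would suppose $1+z$ and $1+z'$ are two such roots; their difference satisfies $(1+z)+(1+z')=2+z+z'$, and since $v(z),v(z')>v(2)$ this sum has valuation exactly $v(2)$, so it is nonzero, while the product of $(z-z')$ with this sum vanishes (both being roots of the same monic quadratic whose linear coefficient is $0$), forcing $z=z'$.

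For the second assertion about $At^2+Bt+C=0$, the key idea is to locate the ``small'' root by treating the quadratic term as a perturbation. I would again invoke the generalized root-finding statement, this time for the (not necessarily monic) polynomial $g(t)=At^2+Bt+C$, with approximate root $a=-\tfrac{C}{B}$. Here $g(a)=A\tfrac{C^2}{B^2}$ has valuation $2v(C)-2v(B)+v(A)$, while $g'(a)=2Aa+B$ has valuation $v(B)$ because $v(2Aa)=v(2)+v(A)+v(C)-v(B)$ exceeds $v(B)$ (this uses $v(AC)>2v(B)\geq 2v(B)-2v(2)$). The inequality $v(g(a))>2v(g'(a))$ then reduces to $v(A)+2v(C)-2v(B)>2v(B)$, i.e. exactly the hypothesis $v(AC)>2v(B)$ after adding $v(C)$; the Henselian statement yields a root $t_0$ with $v(t_0-a)=v\bigl(\tfrac{g(a)}{g'(a)}\bigr)>v\bigl(\tfrac{C}{B}\bigr)$, so $v(t_0)=v(a)=v\bigl(\tfrac{C}{B}\bigr)$ as claimed, and $v(C)\geq v(B)$ places $t_0$ in $R$. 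When $A\neq 0$, the second root $t_1$ satisfies $t_0 t_1=\tfrac{C}{A}$ by Vi\`ete's formula, so $v(t_1)=v(C)-v(A)-v(t_0)=v\bigl(\tfrac{B}{A}\bigr)$, and the strict inequality $v\bigl(\tfrac{B}{A}\bigr)<v\bigl(\tfrac{C}{B}\bigr)$ is just a restatement of $v(AC)>2v(B)$.

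The step I expect to require the most care is \emph{verifying the valuation inequalities cleanly} in the quadratic case — in particular checking that $v(g'(a))=v(B)$ rather than something smaller (which rests on showing the cross term $2Aa$ is strictly higher in valuation than $B$) and confirming that the resulting root-separation inequality lines up exactly with the stated hypothesis $v(AC)>2v(B)$. The rest, especially the first-half square-root computation, is a direct substitution into the already-proven generalized Hensel statement. Throughout I would keep track of the extended valuation convention ($v(0)=\infty$) so that the degenerate cases ($A=0$, or $C=0$) are handled uniformly by the same formulas.
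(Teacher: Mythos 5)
Your first half is correct and matches the paper's argument: existence and the valuation formula $v(z)=v(y)-v(2)$ come from the generalized Hensel statement applied to $t^{2}-1-y$ at the approximate root $1$, and your uniqueness argument (factoring $(z-z')(2+z+z')=0$ and noting $v(2+z+z')=v(2)<\infty$) is equivalent to the paper's observation that the other square root is $-1-z=1+(-2-z)$ with $v(-2-z)=v(2)$, which is not larger than $v(2)$.

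The second half contains a genuine error. You apply the generalized Hensel statement to $g(t)=At^{2}+Bt+C$ at $a=-\frac{C}{B}$ and assert that $v(g(a))>2v(g'(a))$ ``reduces to'' the hypothesis $v(AC)>2v(B)$. It does not. You correctly compute $v(g(a))=v(A)+2v(C)-2v(B)$ and $v(g'(a))=v(B)$, so the inequality you must verify is $v(A)+2v(C)>4v(B)$; adding $v(C)$ to both sides of $v(AC)>2v(B)$ gives only $v(A)+2v(C)>2v(B)+v(C)$, which is strictly weaker whenever $v(C)<2v(B)$. Concretely, for a discrete valuation take $v(A)=3$, $v(B)=1$, $v(C)=0$: then $v(AC)=3>2=2v(B)$, but $v(g(a))=1<2=2v(g'(a))$, so the Hensel hypothesis fails and your invocation is illegitimate (there is also the secondary problem that the quoted statement requires $g \in R[x]$ and $a \in R$, neither of which is guaranteed since $A$, $B$, $C$ range over $\mathbb{K}$). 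The approach is salvageable by normalizing first: substitute $t=\frac{C}{B}u$ and divide by $C$ to obtain $h(u)=\frac{AC}{B^{2}}u^{2}+u+1$ with approximate root $u=-1$; then $v(h(-1))=v(AC)-2v(B)>0=2v(h'(-1))$ is exactly the hypothesis, and the small root follows. The paper sidesteps the issue entirely by writing the discriminant as $B^{2}(1+y)$ with $y=-\frac{4AC}{B^{2}}$, applying the first half of the lemma to extract the square root $B(1+z)$, and reading both roots off the quadratic formula. Your Vi\`ete argument for the valuation of the second root is fine once the first root is secured.
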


\begin{proof}
Consider the polynomial $f(t)=t^{2}-1-y$ and the approximate root 1. The
2-Henselianity of $R$ yields a root of this polynomial, which we write as $1+z$,
such that $z \in I_{v(2)}$ since $f'(1)=2$. We also have
$v(z)=v\big(\frac{f(1)}{f'(1)}\big)=v(y)-v(2)$. The second square root of $1+y$
is $-1-z$, and by subtracting 1 we obtain the element $-2-z$ of $R$, which has
valuation precisely $v(2)$ since $v(z)>v(2)$. Next, the equation $At^{2}+Bt+C=0$
has only one solution $t=-\frac{C}{B}$ if $A=0$,
and otherwise its two solutions are given by $\frac{-B\pm\sqrt{B^{2}-4AC}}{2A}$
(in an appropriate extension of $\mathbb{K}$ if necessary). Now,
$B^{2}-4AC=B^{2}(1+y)$ for $y=-\frac{4AC}{B^{2}}$, and the inequality
$v(y)>2v(2)$ allows us to write $\sqrt{B^{2}-4AC}$ as $B(1+z)$ with $z \in R$
such that $v(z)=v(y)-v(2)=v\big(\frac{2AC}{B^{2}}\big)$.  The two solutions
$\frac{B}{2A}\big(-1\pm(1+z)\big)$ of the equation lie in $\mathbb{K}$. The
solution with $+$ is $\frac{Bz}{2A}$ and has the required valuation
$v\big(\frac{C}{B}\big)$ (like in the case $A=0$), and the other solution has
valuation $v\big(\frac{B}{A}\big)$ since $v(2-z)=v(2)$. As $v(AC)>v(B^{2})$
implies $v\big(\frac{C}{B}\big)>v\big(\frac{B}{A}\big)$, this proves the lemma.
\end{proof}
Note that the condition $2\neq0$ was implicitly used in Lemma \ref{quadeq}, in
assumptions in which some elements have valuations strictly larger than $v(2)$,
as well as dividing by 2 in $\mathbb{K}$. All the assertions of Lemma
\ref{quadeq} collapse if $2=0$ in $R$.

\medskip

Our first result states that the existence of an approximate isomorphism between
lattices over 2-Henselian valuation rings implies that the lattices are indeed
isomorphic. See Theorem 2 of \cite{[D]} for the special case of complete
valuation rings of rank 1, and Corollary 36a of \cite{[J3]} or Lemma 5.1 of
\cite{[C]} for the case $R=\mathbb{Z}_{p}$.

\begin{thm}
Let $M$ and $N$ be $R$-lattices. Decompose $M$ as in Proposition \ref{decom},
and assume that $M$ and $N$ are isomorphic when we reduce modulo
$I_{v(M_{t})+2v(2)}$. Then $M \cong N$ as $R$-lattices. \label{approxiso}
\end{thm}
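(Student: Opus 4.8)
The plan is to lift the mod-$I_{v(M_t)+2v(2)}$ isomorphism $\varphi_0$ to a genuine isometry by a successive-approximation argument, using the Jordan decomposition of Proposition \ref{decom} as an organizing frame. First I would fix Jordan decompositions $M=\bigoplus_k M_k$ and (via $\varphi_0$) a matching decomposition of $N$, reducing to the situation where $M$ and $N$ have the same Gram matrix modulo the ideal $I:=I_{v(M_t)+2v(2)}$. The key point is that the top valuation $v(M_t)$ controls the whole lattice: every entry $(x,y)$ of the Gram matrix of $M$ satisfies $v(x,y)\ge v(M_1)$, and the relevant ``slack'' for Hensel-type corrections is measured against $v(M_t)+2v(2)$, so an error living in $I$ is small enough on every block to be absorbable.

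Next I would set up the lifting block-by-block, from the lowest-valuation component $M_1$ upward. On a single uni-valued block $M_k=L_k(\sigma_k)$, the problem reduces to perturbing an approximate isometry of the \emph{unimodular} lattice $L_k$, where the Gram determinant is a unit; here Lemma \ref{sublat} lets me split off already-matched vectors, and Corollary \ref{diag} gives a normal form (orthogonal basis, or the rank-2 blocks when $2\notin R^*$) on which the correction is explicit. The engine for the correction is Lemma \ref{quadeq}: to adjust a vector $x$ so that its norm or an inner product hits a prescribed target, I solve a quadratic $At^2+Bt+C=0$ whose constant term $C$ (the current error, in $I$) satisfies $v(AC)>2v(B)$ precisely because the error lies in $I_{v(M_t)+2v(2)}$ while $B$ involves a unit times something of valuation $v(M_k)$; the lemma then supplies a correction $t$ of valuation $v(C/B)$, i.e. small, keeping us inside the block and not disturbing lower blocks. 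Iterating vector-by-vector within a block and then block-by-block produces, at each stage, a new approximate isometry agreeing with the target modulo an ideal strictly deeper than $I$.

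The successive approximations define a Cauchy sequence of change-of-basis matrices whose corrections have strictly increasing valuation; because $R$ is 2-Henselian the relevant quadratic roots exist at each finite stage, but I must argue that the \emph{limit} isometry exists in $R$ and not merely in some completion. I would handle this by arranging the construction so that it terminates after finitely many steps rather than requiring a genuine limit: since the lattice has finite rank $r$, there are only finitely many Gram entries to correct, and each Hensel correction via Lemma \ref{quadeq} \emph{exactly} solves its quadratic (the lemma produces an actual root in $R$, not an approximate one), so after correcting the finitely many basis vectors in order the resulting map is an honest isometry with no residual error. This finiteness is what lets the argument work over a valuation ring of arbitrary rank, where ``completeness''/convergence of an infinite sequence of corrections would otherwise be the sticking point.

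The step I expect to be the main obstacle is controlling the \emph{interference between blocks}: when I correct a vector in block $M_k$, the correction term $t x$ generically has nonzero inner products with vectors in blocks $M_j$ for $j\neq k$, potentially spoiling alignments already achieved on lower blocks. The resolution is a valuation bookkeeping argument showing these cross-terms have valuation strictly greater than the tolerance at the relevant block, i.e. that correcting at level $v(M_k)$ perturbs block $M_j$ only by elements of an ideal deeper than the one governing $M_j$'s matching; establishing the sharp inequality $v(M_k)+v(M_j)>v(M_j)+(\text{tolerance})$ uniformly, using that the tolerance is pegged to the \emph{top} valuation $v(M_t)$ plus $2v(2)$, is the technical heart and the reason the bound in the hypothesis is stated with $v(M_t)$ rather than a per-block quantity. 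I would verify this inequality carefully and then conclude by assembling the block-wise isometries into the desired isomorphism $M\cong N$.
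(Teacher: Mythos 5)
Your plan is essentially the paper's own argument: lift the mod-$I$ map to a module isomorphism, peel off a rank-1 or rank-2 piece of the lowest block via Lemma \ref{sublat} and Corollary \ref{diag}, exactly correct its norms and inner products with Lemma \ref{quadeq}, adjust the orthogonal complement by Cramer's-rule coefficients lying in $I_{2v(2)}$ (your ``interference'' bookkeeping), and recurse on the rank --- which is exactly why the process terminates without any completeness assumption. The only part you leave schematic is the explicit determination of $s$, $t$, and $c$ for the indecomposable rank-2 blocks, which is the bulk of the paper's computation but follows the strategy you describe.
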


\begin{proof}
Denote $I_{v(M_{t})+2v(2)}$ by $I$. An isomorphism over $R/I$ can be lifted to
an $R$-module homomorphism $\varphi:M \to N$ (since $M$ is a free module).
Moreover, $\varphi$ must be bijective: Observe that $M$ and $N$ must have the
same rank, and by choosing bases for both modules the determinant of $\varphi$
is a unit modulo $I$ hence lies in $R^{*}$. $\varphi$ preserves the bilinear
form up to $I$, and we now show how to alter $\varphi$ to a lattice isomorphism
from $M$ to $N$. We apply induction on the (common) rank of $M$ and $N$.

Assume first that $M_{1}$ has an orthogonal basis, and let $x$ be an element of
the basis of $M_{1}$. Then $v(x^{2})$ is minimal in $M$, and the fact that
$x^{2} \notin I$ implies the equality $v(\varphi(x)^{2})=v(x^{2})$. This
valuation is also minimal in $N$. Moreover, the inequality
$v\big(\frac{x^{2}}{\varphi(x)^{2}}-1\big)>2v(2)$ holds, so that by Lemma
\ref{quadeq} there is $c \in R^{*}$ with $v(c-1)>v(2)$ such that
$c^{2}=\frac{x^{2}}{\varphi(x)^{2}}$. Lemma \ref{sublat} implies that
$M=Rx\oplus(Rx)^{\perp}$, and we define $\psi:M \to N$ by taking $x$ to
$c\varphi(x)$ and $u\in(Rx)^{\perp}$ to
$\varphi(u)-\frac{(\varphi(u),\varphi(x))}{\varphi(x)^{2}}\varphi(x)$. Since
$\big(\varphi(u),\varphi(x)\big) \in I$ (as $(u,x)=0$), we have
$\frac{(\varphi(u),\varphi(x))}{\varphi(x)^{2}} \in I_{2v(2)} \subseteq R$. Now,
$\psi(x)^{2}=x^{2}$, and if $u \perp x$ then we have $\psi(u)\perp\psi(x)$. If
$w \in M$ is another vector such that $w \perp x$ as well, then we have
\[\big(\psi(u),\psi(w)\big)=\big(\varphi(u),\varphi(w)\big)-\frac{(\varphi(u),
\varphi(x))(\varphi(w),\varphi(x))}{\varphi(x)^{2}}.\] The congruence
$\big(\varphi(u),\varphi(w)\big)\equiv(u,w)(\mathrm{mod\ }I)$ and the relations
$\frac{(\varphi(u),\varphi(x))}{\varphi(x)^{2}} \in R$ and
$\big(\varphi(w),\varphi(x)\big) \in I$ now imply
$\big(\psi(u),\psi(w)\big)\equiv(u,w)(\mathrm{mod\ }I)$ for any $u$ and $w$ in
$(Rx)^{\perp}$.

On the other hand, if $M_{1}$ has no orthogonal basis, then we take some $x$
and $y$ in $M_{1}$ such that $v(x,y)$ is minimal in $M$, and then
$v\big(\varphi(x),\varphi(y)\big)=v(x,y)$ is minimal in $N$. Moreover,
$v(\varphi(x)^{2})$ and $v(\varphi(y)^{2})$ are both larger than $v(x,y)$. We
need to modify $\varphi(x)$ and $\varphi(y)$ in order to obtain elements
spanning a rank 2 sublattice of $N$ which is isomorphic to $Rx \oplus Ry$. We
claim that there exist elements $s$ and $t$ in $I_{v(2)}$ and
$c\equiv1(\mathrm{mod\ }I_{v(2)})$ such that
\[\big(c\varphi(x)+cs\varphi(y)\big)^{2}=x^{2},\quad
\big(\varphi(y)+t\varphi(x)\big)^{2}=y^{2},\] and
\[\big(c\varphi(x)+cs\varphi(y),\varphi(y)+t\varphi(x)\big)=(x,y).\] First we
apply Lemma \ref{quadeq} with the numbers $A=\varphi(x)^{2}$,
$B=2\big(\varphi(x),\varphi(y)\big)$, and $C=\varphi(y)^{2}-y^{2} \in I$ (these
numbers satisfy the assumptions of that Lemma). The corresponding solution $t$,
of valuation $v(\frac{C}{B}\big)>v(2)$, satisfies
$\big(\varphi(y)+t\varphi(x)\big)^{2}=y^{2}$ as required.

Further, denote $x^{2}y^{2}-(x,y)^{2}$ by $\Delta$ and
$\varphi(x)^{2}\varphi(y)^{2}-\big(\varphi(x),\varphi(y)\big)^{2}$ by
$\Delta_{\varphi}$, so that $v(\Delta)=v(\Delta_{\varphi})=2v(x,y)$ (see
Corollary \ref{diag}). Observe that
\[\big(\varphi(x),\varphi(y)\big)^{2}-(x,y)^{2}=\Big(\big(\varphi(x),
\varphi(y)\big)-(x,y)\Big)\Big(\big(\varphi(x),\varphi(y)\big)+(x,y)\Big)\] and
\[\varphi(x)^{2}\varphi(y)^{2}-x^{2}y^{2}=\varphi(x)^{2}\big(\varphi(y)^{2}-y^{2
}\big)+y^{2}\big(\varphi(x)^{2}-x^{2}\big)\] are elements of $(x,y)I$, so that
$\Delta_{\varphi}-\Delta$ lies in the same ideal and
$x^{2}\big(\frac{\Delta_{\varphi}}{\Delta}-1\big) \in I$. Hence
$C=\varphi(x)^{2}-x^{2}\frac{\Delta_{\varphi}}{\Delta} \in I$, while
$B=2\big(\varphi(x),\varphi(y)\big)+2tx^{2}\frac{\Delta_{\varphi}}{\Delta}$ has
valuation $v(x,y)+v(2)$ and
$A=\varphi(y)^{2}-t^{2}x^{2}\frac{\Delta_{\varphi}}{\Delta}$ has valuation
$v(x^{2}) \geq v(x,y)$. Thus, we can use Lemma \ref{quadeq} again and obtain a
solution $s$, of valuation $v(\frac{C}{B}\big)>v(2)$, to $As^{2}+Bs+C=0$.
Furthermore, since $s$ and $t$ are in $I_{v(2)}$, the number
$\frac{(1+st)(\varphi(x),\varphi(y))+s\varphi(y)^{2}+t\varphi(x)^{2}}{(x,y)}$
is congruent to 1 modulo $I_{v(2)}$ (hence lies in $R^{*}$). We denote by $c$
the inverse of this number, hence $v(c-1)>v(2)$ as well. The two elements
$c\varphi(x)+cs\varphi(y)$ and $\varphi(y)+t\varphi(x)$ span $R\varphi(x) \oplus
R\varphi(y)$ since the determinant $c(1-st)$ of the transition matrix is in
$R^{*}$, and the choice of $c$ implies
$\big(c\varphi(x)+cs\varphi(y),\varphi(y)+t\varphi(x)\big)=(x,y)$.

In order to evaluate $\big(c\varphi(x)+cs\varphi(y)\big)^{2}$ we write the
square of the denominator of $c$ as
\[\Big[s^{2}\big(\varphi(x),\varphi(y)\big)^{2}+2s\varphi(x)^{2}\big(\varphi(x),
\varphi(y)\big)+\big(\varphi(x)^{2}\big)^{2}\Big]t^{2}+\]
\[+2\Big[s^{2}\varphi(y)^{2}\big(\varphi(x),\varphi(y)\big)+s\varphi(x)^{2}
\varphi(y)^{2}+s\big(\varphi(x),\varphi(y)\big)^{2}+\varphi(x)^{2}
\big(\varphi(x),\varphi(y)\big)\big]t+\]
\[+\Big[s^{2}\big(\varphi(y)^{2}\big)^{2}+2s\varphi(y)^{2}\big(\varphi(x),
\varphi(y)\big)+\big(\varphi(x),\varphi(y)\big)^{2}\Big].\] Substituting the
quadratic equation for $t$ in each of the coefficients of $s^{2}$, $s$, and 1
takes the latter expression to the form
\[\big[y^{2}\varphi(y)^{2}-\Delta_{\varphi}t^{2}\big]s^{2}+2\big[y^{2}
\big(\varphi(x),\varphi(y)\big)+\Delta_{\varphi}t\big]s+\big[y^{2}\varphi(x)^{2}
-\Delta_{\varphi}\big].\] Multiplying
$\big(c\varphi(x)+cs\varphi(y)\big)^{2}-x^{2}$ by the latter expression yields
(recall the numerator $(x,y)$ of $c$)
\[\big[(x,y)^{2}\varphi(y)^{2}-x^{2}y^{2}\varphi(y)^{2}+x^{2}\Delta_{\varphi}t^{
2}\big]s^{2}+\]
\[+2\big[(x,y)^{2}\big(\varphi(x),\varphi(y)\big)-x^{2}y^{2}\big(\varphi(x),
\varphi(y)\big)-x^{2}\Delta_{\varphi}t\big]s+\]
\[+\big[(x,y)^{2}\varphi(x)^{2}-x^{2}y^{2}\varphi(x)^{2}+x^{2}\Delta_{\varphi}
\big].\] The coefficients of $s^{2}$, $s$, and 1 are
$t^{2}x^{2}\Delta_{\varphi}-\varphi(y)^{2}\Delta$,
$-2tx^{2}\Delta_{\varphi}-2(\varphi(x),\varphi(y))\Delta$, and
$x^{2}\Delta_{\varphi}-\varphi(x)^{2}\Delta$ respectively, so the quadratic
equation for $s$ shows that the latter expression vanishes. This shows that
$\big(c\varphi(x)+cs\varphi(y)\big)^{2}=x^{2}$ as desired.

Let $u \in (Rx \oplus Ry)^{\perp}$ be given. As in the proof of Lemma
\ref{sublat}, we can find, using Cramer's rule, the coefficients of $\varphi(x)$
and $\varphi(y)$ which should be subtracted from $\varphi(u)$ in order to obtain
a vector perpendicular to $\varphi(x)$ and $\varphi(y)$. These coefficients are
of the form $\frac{\det A_{i,u}}{\Delta}$, hence lie in $R$, and in fact in
$I_{2v(2)}$. We define a map $\psi:M \to N$ by sending $x$ to
$c\varphi(x)+cs\varphi(y)$, $y$ to $\varphi(y)+t\varphi(x)$, and $u \in (Rx
\oplus Ry)^{\perp}$ to $\varphi(u)$ modified by the appropriate multiples of
$\varphi(x)$ and $\varphi(y)$. The map $\psi$ is an isomorphism of $Rx \oplus
Ry$ onto its image $R\varphi(x) \oplus R\varphi(y)$ and it takes $(Rx \oplus
Ry)^{\perp}$ onto the orthogonal complement of the latter space. In addition,
$\big(\psi(u),\psi(w)\big)\equiv(u,w)(\mathrm{mod\ }I)$ for every $u$ and $w$
in $(Rx \oplus Ry)^{\perp}$ by arguments similar to the previous case, using the
orthogonality of $\psi(u)$ and $\psi(w)$ to $\varphi(x)$ and to $\varphi(y)$.

In both cases $M$ decomposes as $K \oplus K^{\perp}$ and we have altered
$\varphi$ to a map $\psi$ which is an isomorphism on $K$ and preserves
the orthogonality between $K$ and $K^{\perp}$. Since the restriction of $\psi$
to $K^{\perp}$ (denoted $\psi\big|_{K^{\perp}}$) becomes an isomorphism when
reducing modulo $I$, the induction hypothesis allows us to alter
$\psi\big|_{K^{\perp}}$ to an isomorphism $\eta:K^{\perp}\to\psi(K^{\perp})$.
The map which takes $x \in K$ to $\psi(x)$ and $u \in K^{\perp}$ to $\eta(u)$ is
the desired isomorphism from $M$ to $N$.
\end{proof}

We note that in each induction step in Theorem \ref{approxiso} the element $c-1$
of $R$, as well as $s$ and $t$ in the second case, lie in $I_{v(2)}$. Moreover,
the coefficients we use when changing the map on the orthogonal complement in
each step lie in $I_{2v(2)}$. This proves the stronger statement, that reducing
any ``isomorphism-up-to-$I$'' modulo the (larger) ideal $I_{v(2)}$ yields the
image (modulo $I_{v(2)}$) of a true isomorphism from $M$ to $N$.

\section{Uniqueness of the Decomposition if $2 \in R^{*}$ \label{Unique}}

We recall that for odd $p$ there are two isomorphism classes of unimodular
$p$-adic lattices of rank $n$, and the isomorphism classes correspond to the
possible values of the Legendre symbol of the discriminant of the lattice over
the prime $p$ (see, for example, Section 3 of \cite{[Z]}). Then the
decomposition of a general $p$-adic lattice as described in Section \ref{Jordan}
allows us to define the \emph{symbol} of the $p$-adic lattice, which is an
expression of the form $\prod_{k=0}^{m}(p^{k})^{\varepsilon_{k}n_{k}}$ with
$n_{k}\in\mathbb{N}$ being the rank of the uni-valued component of valuation
$k$ and $\varepsilon_{k}\in\{\pm1\}$ is the appropriate Legendre symbol.
Moreover, the possible symbols are in one-to-one correspondence with isomorphism
classes of $p$-adic lattices. We now use a simple argument to show that a
similar assertion holds over any 2-Henselian valuation ring (of arbitrary rank)
with a finite residue field in which 2 is invertible.

\medskip

Following Section 4 of \cite{[MH]}, we define, for a decomposition of $M$ as an
orthogonal direct sum $K \oplus L$, the \emph{reflection} corresponding to this
decomposition to be the map $r:M \to M$ which takes $x \in K$ to $x$ and $y \in
L$ to $-y$. This map is an involution, which preserves the bilinear form on $M$.
First we prove

\begin{lem}
Let $R$ be a valuation ring in which $2 \in R^{*}$, let $M$ be an $R$-lattice,
and let $x$ and $y$ be elements in $M$ having the same norm. Let $v$ be the
valuation of this common norm, and assume further that the norm of any $z \in M$
is at least $v$. Then there is a reflection on $M$ taking $x$ to $y$.
\label{refl}
\end{lem}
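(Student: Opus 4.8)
The plan is to reduce everything to splitting off a single rank-one summand and then taking the reflection attached to the resulting orthogonal decomposition, in the spirit of the symmetry argument of Section 4 of \cite{[MH]}. Set $w_{-}=x-y$ and $w_{+}=x+y$. The hypothesis $x^{2}=y^{2}$ gives $(w_{-},w_{+})=x^{2}-y^{2}=0$, so $w_{-}\perp w_{+}$; this orthogonality is what will let both vectors sit in complementary summands of one decomposition. Since $2\in R^{*}$, the polarization identity $2(z,z')=(z+z')^{2}-z^{2}-z'^{2}$ together with the assumption that every norm has valuation $\geq v$ shows $v(z,z')\geq v$ for all $z,z'\in M$; in particular $v(M)=v$. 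If $x=y$ the identity map is the required reflection, so I may assume $w_{-}\neq0$.

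First I would locate a vector among $w_{\pm}$ whose norm has minimal valuation $v$. Using $x^{2}=y^{2}$ one has $w_{-}^{2}+w_{+}^{2}=2(x^{2}+y^{2})=4x^{2}$, which has valuation exactly $v$ because $2\in R^{*}$. As both $w_{-}^{2}$ and $w_{+}^{2}$ have valuation $\geq v$, they cannot both exceed $v$, so at least one of them, say $w^{2}$ with $w\in\{w_{-},w_{+}\}$, satisfies $v(w^{2})=v$. Because $v(w^{2})=v=v(M)$ and $v(z,w)\geq v$ for every $z\in M$, the rank-one lattice $N=Rw$ meets the hypothesis of Lemma \ref{sublat} (the single cofactor $(z,w)$ is divisible by $\det=w^{2}$), and hence $M=Rw\oplus(Rw)^{\perp}$ as lattices.

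Finally I would choose the reflection according to which of $w_{\pm}$ was used, writing $x=\tfrac12 w_{+}+\tfrac12 w_{-}$ and noting $\tfrac12 w_{+}\in(Rw_{-})^{\perp}$ and $\tfrac12 w_{-}\in(Rw_{+})^{\perp}$ by $(w_{-},w_{+})=0$. If $w=w_{-}$, I take the reflection $r$ of the decomposition $(Rw_{-})^{\perp}\oplus Rw_{-}$ (fixing the complement, negating $w_{-}$); it sends $x=\tfrac12 w_{+}+\tfrac12 w_{-}$ to $\tfrac12 w_{+}-\tfrac12 w_{-}=y$. If instead $w=w_{+}$, I take the reflection of $Rw_{+}\oplus(Rw_{+})^{\perp}$ (fixing $w_{+}$, negating its complement); since $\tfrac12 w_{-}\in(Rw_{+})^{\perp}$, it again sends $x$ to $\tfrac12 w_{+}-\tfrac12 w_{-}=y$. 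In either case $r(x)=y$, as desired.

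The main obstacle I anticipate is the sign bookkeeping in the second case: the naive hyperplane reflection in $w_{+}$ sends $x$ to $-y$ rather than $y$, so one must instead use the reflection that \emph{fixes} the line $Rw_{+}$ and negates its corank-one complement. This is legitimate precisely because the paper's definition of a reflection is attached to an arbitrary orthogonal decomposition $K\oplus L$ (with $L$ of any rank), not only to a hyperplane; recognizing that this freedom is exactly what resolves the $x+y$ case is the crux. The only other point needing care is confirming that the chosen $Rw$ genuinely splits off, which is the Lemma \ref{sublat} divisibility and follows at once from $v(w^{2})$ being minimal.
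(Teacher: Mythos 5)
Your proof is correct and follows essentially the same route as the paper's: decompose $x$ and $y$ via the orthogonal pair $x\pm y$, observe that at least one of these has norm of minimal valuation $v$ (the paper works with $u=\frac{x+y}{2}$ and $w=\frac{x-y}{2}$, which differ from your $w_{\pm}$ only by the unit $2$), split off the corresponding rank-one sublattice by Lemma \ref{sublat}, and take the reflection fixing $R(x+y)$ or negating $R(x-y)$ accordingly. Your explicit polarization step justifying $v(z,z')\geq v$ is a slightly more careful rendering of what the paper leaves implicit, but the argument is the same.
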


\begin{proof}
Write $x=u+w$ and $y=u-w$ for $u=\frac{x+y}{2}$ and $w=\frac{x-y}{2}$. The norm
equality $x^{2}=y^{2}$ implies $u \perp w$, hence this common norm equals
$u^{2}+w^{2}$. Under our assumption on $v$ we have $v(u^{2}) \geq v$, $v(w^{2})
\geq v$, and $v(u^{2}+w^{2})=v$, whence at least one of the two inequalities is
an equality. Since $2 \in R^{*}$, the proof of Corollary \ref{diag} shows that
the 1-dimensional sublattice generated by the corresponding element ($u$ or $w$)
satisfies the conditions of Lemma \ref{decom}, giving a decomposition of $M$.
Observing again that $u \perp w$, we find that if $v(u^{2})=v$ then the
reflection with respect to the decomposition $M=Ru\oplus(Ru)^{\perp}$ gives the
desired outcome, while if $v(w^{2})=v$ we can use the one corresponding to the
decomposition $M=(Rw)^{\perp} \oplus Rw$. This proves the lemma.
\end{proof}

As an application of Lemma \ref{refl} we deduce

\begin{cor}
Every automorphism of a rank $n$ lattice $M$ over a valuation ring $R$ such that
$2 \in R^{*}$ is the composition of at most $n$ reflections.
\label{refgen}
\end{cor}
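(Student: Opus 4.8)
The plan is to prove this by induction on the rank $n$, using Lemma \ref{refl} to peel off one basis vector at a time. First I would fix a basis $e_1,\dots,e_n$ of $M$ and let $\varphi$ be an arbitrary automorphism. The idea is to arrange for $\varphi$ to fix $e_1$ (after composing with a suitable reflection), then restrict to the orthogonal complement $(Re_1)^{\perp}$ and invoke the inductive hypothesis there. The base case $n=0$ (the zero lattice) is trivial, its only automorphism being the identity, which is the composition of zero reflections.

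For the inductive step I would proceed as follows. By Proposition \ref{decom} and Corollary \ref{diag}, since $2 \in R^*$, the lattice $M$ has an orthogonal basis; choose one and call its first vector $x$. Now $\varphi(x)$ and $x$ have the same norm, since $\varphi$ preserves the bilinear form. The subtlety is that Lemma \ref{refl} requires the common norm to have \emph{minimal} valuation in $M$, i.e.\ $v(x^2)=v(M)$. To secure this I would not take $x$ arbitrary but rather choose $x$ to be a basis vector realizing the minimal norm valuation $v(M)$, which exists by the argument in the proof of Corollary \ref{diag} (over a ring with $2 \in R^*$ one can always find an element whose norm attains $v(M)$, and it can be taken as part of an orthogonal basis). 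With this choice, both $x$ and $\varphi(x)$ have norm of valuation $v(M)$, which is minimal in $M$, so Lemma \ref{refl} applies and yields a reflection $r$ with $r(\varphi(x))=x$; equivalently, if $\varphi(x)\neq x$ we get a reflection taking $\varphi(x)$ to $x$, and if $\varphi(x)=x$ already we simply skip this reflection.

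Next I would set $\psi = r \circ \varphi$ (or $\psi=\varphi$ in the degenerate case), an automorphism of $M$ fixing $x$. Since $\psi$ is an isometry fixing $x$, it preserves $(Rx)^{\perp}$: for any $u \perp x$ we have $(\psi(u),x)=(\psi(u),\psi(x))=(u,x)=0$. By Lemma \ref{sublat} (applied to $N=Rx$, whose rank-one Gram ``matrix'' trivially satisfies the divisibility hypothesis) we have $M = Rx \oplus (Rx)^{\perp}$, and $(Rx)^{\perp}$ is a non-degenerate lattice of rank $n-1$ over $R$ with $2 \in R^*$. The restriction $\psi|_{(Rx)^{\perp}}$ is an automorphism of this rank $n-1$ lattice, so by the inductive hypothesis it is a composition of at most $n-1$ reflections of $(Rx)^{\perp}$. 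The final point to address is that a reflection of the sublattice $(Rx)^{\perp}$, corresponding to a decomposition $(Rx)^{\perp}=K'\oplus L'$, extends to a reflection of all of $M$ via the decomposition $M = (Rx \oplus K')\oplus L'$ (placing the fixed $Rx$ summand on the $+1$ side), and this extended reflection fixes $x$; composing these extensions reconstructs $\psi$ on $(Rx)^\perp$ while keeping $x$ fixed.

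Putting it together, $\psi$ is a composition of at most $n-1$ reflections (the extensions of those on the complement), and hence $\varphi = r^{-1}\circ\psi = r\circ\psi$ is a composition of at most $n$ reflections, since a reflection is its own inverse. The main obstacle I anticipate is the bookkeeping in the inductive step: one must verify carefully that the chosen $x$ realizes the minimal valuation so that Lemma \ref{refl} is genuinely applicable, and that reflections on the orthogonal complement lift to honest reflections on $M$ fixing $x$. Both are routine once the orthogonal decomposition $M = Rx \oplus (Rx)^{\perp}$ is in hand, so the essential work is simply invoking Lemma \ref{refl} with the correct minimality hypothesis.
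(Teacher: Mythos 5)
Your proof is correct and follows essentially the same route as the paper: induction on the rank, choosing an orthogonal basis vector $x$ of minimal norm valuation (via Corollary \ref{diag}), applying Lemma \ref{refl} to reflect $f(x)$ back to $x$, decomposing $M=Rx\oplus(Rx)^{\perp}$ by Lemma \ref{sublat}, and extending reflections of the complement to $M$ by fixing $x$. The only cosmetic differences are your base case $n=0$ versus the paper's $n=1$ and your (unnecessary but harmless) special handling of the case $\varphi(x)=x$.
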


\begin{proof}
We apply induction on $n$, the case $n=1$ being trivial (since $Aut(M)$ is just
$\{\pm1\}$ in this case). Corollary \ref{diag} yields an orthogonal basis for
$M$, and let $x$ be an element of this basis whose norm has minimal valuation.
Given an automorphism $f$ of $M$, the elements $x$ and $f(x)$ of $M$ satisfy the
conditions of Lemma \ref{refl}, hence there exists a reflection $r$ on $M$
taking $f(x)$ to $x$. Lemma \ref{sublat} implies $M=Rx\oplus(Rx)^{\perp}$, and
the composition $r \circ f$ fixes $x$, hence restricts to an automorphism of
$(Rx)^{\perp}$. By the induction hypothesis, the latter automorphism is a
composition of at most $n-1$ reflections on $(Rx)^{\perp}$, and by extending
each such reflection to $M$ by leaving $x$ invariant we obtain that $r \circ f$
is the composition of at most $n-1$ reflections on $M$. Composing with
$r^{-1}=r$ completes the proof of the corollary.
\end{proof}

\smallskip

Next we prove a special case of the Witt Cancellation Theorem, which holds for
lattices over any valuation ring in which 2 is invertible.

\begin{prop}
Let $M$ and $N$ be lattices over a valuation ring $R$ in which $v(2)=0$, and
define $v=\min\{v(M),v(N)\}$. Let $L$ be a rank 1 lattice spanned by an element
$x$ whose norm has valuation not exceeding $v$, and assume that $M \oplus L$ and
$N \oplus L$ are isomorphic. Then the lattices $M$ and $N$ are isomorphic.
\label{cancr1}
\end{prop}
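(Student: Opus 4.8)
The plan is to exploit the reflection supplied by Lemma \ref{refl} to move the distinguished copy of $x$ into a standard position, and then to use the fact that a lattice isomorphism respects orthogonal complements. The whole argument hinges on $x$ having \emph{minimal} norm valuation in both ambient lattices, which is exactly what the hypothesis on $v(x^{2})$ guarantees.

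First I would fix an isomorphism $\varphi:M\oplus L\to N\oplus L$ and write $x$ for the generator of the summand $L$ on each side. Since $L$ is a rank $1$ lattice, it is non-degenerate, so $x^{2}\neq0$ and $v(x^{2})$ is finite; moreover by hypothesis $v(x^{2})\leq v\leq\min\{v(M),v(N)\}$. Consequently $v(M\oplus L)=\min\{v(M),v(x^{2})\}=v(x^{2})$ and likewise $v(N\oplus L)=v(x^{2})$. In other words $v(x^{2})$ is minimal among the valuations of all inner products, and in particular of all norms, in each of the two lattices.

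Next I would compare the two elements $\varphi(x)$ and $x$ inside $N\oplus L$. They have the same norm $x^{2}$, whose valuation is minimal in $N\oplus L$ by the previous step. Since $v(2)=0$, Lemma \ref{refl} applies to $N\oplus L$ and produces a reflection $r$ of $N\oplus L$ with $r\big(\varphi(x)\big)=x$. The composite $r\circ\varphi:M\oplus L\to N\oplus L$ is then an isomorphism of lattices carrying $x$ to $x$.

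Finally I would pass to orthogonal complements. Because $x^{2}\neq0$ and $R$ is a domain, an element $m+ax$ with $m\in M$ and $a\in R$ is orthogonal to $x$ precisely when $ax^{2}=0$, i.e.\ when $a=0$; hence $(Rx)^{\perp}=M$ inside $M\oplus L$, and symmetrically $(Rx)^{\perp}=N$ inside $N\oplus L$. As $r\circ\varphi$ preserves the bilinear form and fixes $x$, for any $w\perp x$ one has $\big((r\circ\varphi)(w),x\big)=\big((r\circ\varphi)(w),(r\circ\varphi)(x)\big)=(w,x)=0$, so $r\circ\varphi$ maps $(Rx)^{\perp}$ into $(Rx)^{\perp}$; applying the same reasoning to its inverse shows it maps $M$ \emph{onto} $N$. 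Thus $r\circ\varphi$ restricts to an isomorphism $M\to N$, giving $M\cong N$. The only delicate points, rather than genuine obstacles, are verifying that $v(x^{2})$ is truly minimal in both lattices (so that Lemma \ref{refl} is applicable) and correctly identifying the two orthogonal complements; everything else is formal.
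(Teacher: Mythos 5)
Your argument is correct and is essentially the paper's own proof: both fix an isomorphism of $M\oplus L$ with $N\oplus L$, apply Lemma \ref{refl} to find a reflection carrying the image of $x$ back to $x$, and then identify $M$ and $N$ as the orthogonal complements of $Rx$ on the two sides. Your additional verifications (that $v(x^{2})$ is minimal in both ambient lattices and that the complements are exactly $M$ and $N$) are details the paper leaves implicit, and they check out.
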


\begin{proof}
Let $f:M \oplus L \to N \oplus L$ be an isomorphism. The elements $0_{N}+x$ and
$f(0_{M}+x)$ of $N \oplus L$ satisfy the conditions of Lemma \ref{refl},
yielding a reflection $r$ on $N \oplus L$ taking the latter element to the
former. Writing $g=r \circ f$, we obtain an isomorphism from $M \oplus L$ to $N
\oplus L$ which takes the direct summand $L$ of the first lattice onto the
direct summand $L$ in the second one. This isomorphism must therefore take $M$
isomorphically onto $N$, which proves the proposition.
\end{proof}

Proposition \ref{cancr1} generalizes a special case of Theorem 1 of \cite{[J1]},
with a simpler proof.

\smallskip

We can now prove the main result for the case $v(2)=0$:

\begin{thm}
Let $M$ and $N$ be lattices over a 2-Henselian valuation ring $R$ such that $2
\in R^{*}$. Decompose $M$ and $N$, using Proposition \ref{decom}, as
$\bigoplus_{k=1}^{t}M_{k}$ and $\bigoplus_{k=1}^{s}N_{k}$ respectively. If $M
\cong N$ then $t=s$ and for any $k$ we have $M_{k} \cong N_{k}$ (and in
particular $v(M_{k})=v(N_{k})$). \label{isocomp}
\end{thm}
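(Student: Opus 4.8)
The plan is to argue by induction on the common rank $n$ of $M$ and $N$, peeling off at each stage a rank $1$ summand of minimal valuation from the first Jordan component. The base case $n\leq1$ is immediate, since a rank $1$ lattice $\langle a\rangle$ determines both $v(a)$ and the class of $a$ modulo $(R^{*})^{2}$. For the inductive step, first note that $v(M)=v(N)$ because $v(M)=\min\{v(x,y)\}$ is manifestly an isomorphism invariant; as the valuations $v(M_{k})$ strictly increase in $k$, the minimum is attained at $k=1$, whence $v(M_{1})=v(M)=v(N)=v(N_{1})=:v$. Since $2\in R^{*}$, Corollary \ref{diag} gives $M_{1}$ an orthogonal basis, and I pick a basis vector $x$, so that $v(x^{2})=v$ is minimal in $M$. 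Lemma \ref{sublat} then yields $M=Rx\oplus(Rx)^{\perp}$, and because $x$ is a basis vector of $M_{1}$ we obtain the Jordan splitting $(Rx)^{\perp}=M_{1}^{-}\oplus M_{2}\oplus\cdots\oplus M_{t}$, where $M_{1}^{-}$ is the orthogonal complement of $Rx$ inside $M_{1}$ (uni-valued of valuation $v$ and rank one less), with $M_{1}=Rx\oplus M_{1}^{-}$.

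Next I transport $x$ through a fixed isomorphism $f\colon M\to N$ to get $w=f(x)\in N$ with $w^{2}=x^{2}$, again of minimal valuation $v$; since $f$ is an isometry it identifies $(Rx)^{\perp}$ with $(Rw)^{\perp}$. The main obstacle is that $w$ need not be a basis vector of $N_{1}$: writing $w=w_{1}+w'$ with $w_{1}\in N_{1}$ and $w'\in\bigoplus_{k\geq2}N_{k}$, orthogonality of the Jordan components gives $w^{2}=w_{1}^{2}+(w')^{2}$ with $v((w')^{2})>v$, so $w_{1}^{2}=x^{2}\big(1+\delta\big)$ with $v(\delta)>0=2v(2)$. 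This is exactly where I use Lemma \ref{quadeq}: since $2\in R^{*}$, the element $1+\delta$ is a square $c^{2}$ with $c\in R^{*}$, so $Rw_{1}$ has norm $c^{2}x^{2}$ and is therefore isomorphic to $Rx$. As $v(w_{1}^{2})=v$ is minimal in $N_{1}$, Lemma \ref{sublat} applied inside $N_{1}$ splits off $N_{1}=Rw_{1}\oplus P$ with $P$ uni-valued of valuation $v$ and rank $\operatorname{rank}(N_{1})-1$, giving a second Jordan splitting $N=Rw_{1}\oplus\big(P\oplus N_{2}\oplus\cdots\oplus N_{s}\big)$. I expect this square-class computation, together with the verification that $Rw_{1}$ genuinely splits off $N_{1}$, to be the crux of the argument.

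With two decompositions of $N$ in hand, $N=Rw\oplus(Rw)^{\perp}$ and $N=Rw_{1}\oplus\big(P\oplus N_{2}\oplus\cdots\oplus N_{s}\big)$, both rank $1$ summands being isomorphic to $\langle x^{2}\rangle$ and of valuation $v\leq\min\{v((Rw)^{\perp}),v(P\oplus\bigoplus_{k\geq2}N_{k})\}$, Proposition \ref{cancr1} lets me cancel and conclude $(Rw)^{\perp}\cong P\oplus N_{2}\oplus\cdots\oplus N_{s}$. Combining this with $(Rx)^{\perp}\cong(Rw)^{\perp}$ produces an isomorphism of rank $n-1$ lattices $M_{1}^{-}\oplus\bigoplus_{k\geq2}M_{k}\cong P\oplus\bigoplus_{k\geq2}N_{k}$, both sides already in Jordan form. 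The induction hypothesis then matches these component-wise: either both valuation-$v$ pieces $M_{1}^{-}$ and $P$ are absent, forcing $\operatorname{rank}(M_{1})=\operatorname{rank}(N_{1})=1$, or they are present and isomorphic; in either case $t=s$, $M_{k}\cong N_{k}$ for $k\geq2$, and $M_{1}^{-}\cong P$. Finally, reassembling $M_{1}=Rx\oplus M_{1}^{-}\cong Rw_{1}\oplus P=N_{1}$ (using $Rx\cong Rw_{1}$) closes the induction and proves the theorem.
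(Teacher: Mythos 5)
Your proof is correct and follows essentially the same route as the paper's: peel off a rank~$1$ summand of minimal valuation, project the image of its generator to $N_{1}$, use Lemma \ref{quadeq} (with $2\in R^{*}$) to rescale that projection to an isometric generator, cancel via Proposition \ref{cancr1}, and induct. The only cosmetic difference is that you cancel between two splittings of $N$ itself (using $f$ to identify $(Rx)^{\perp}$ with $(Rw)^{\perp}$), whereas the paper cancels between $M\cong L\oplus(Ry)^{\perp}$ and $N\cong L\oplus(Rz)^{\perp}$ directly.
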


\begin{proof}
The ranks of $M$ and $N$ must be equal, and we apply induction on this common
rank. The case of rank 1 is immediate. Let $y \in M_{1}$ be a basis element as
in Corollary \ref{diag}. Then $v(y^{2})$ is minimal in $M$, and let $w \in N$ be
an element having the same norm as $y$ (such $w$ exists since $M \cong N$).
$v(w^{2})$ is also minimal in $N$, hence $v(N_{1})=v(M_{1})$. Write
$w=\sum_{k=1}^{s}w_{k}$ with $w_{k} \in N_{k}$ for any $1 \leq k \leq s$. The
minimality of $v(w^{2})$ implies $v(w_{k}^{2})>v(w^{2})$ for any $k\geq2$. Thus,
the image of $\frac{w^{2}}{w_{1}^{2}}$ modulo $I_{0}$ is 1. Since $v(2)=0$,
Lemma \ref{quadeq} yields the existence of $c \in R^{*}$ such that
$w^{2}=c^{2}w_{1}^{2}$, so that $z=cw_{1} \in N_{1}$ has the same norm as $y$.
Lemma \ref{sublat} allows us to write $M$ as $Ry\oplus(Ry)^{\perp}$ and
$N$ as $Rz\oplus(Rz)^{\perp}$. Let $L$ be a rank 1 lattice generated by an
element $x$ having the same norm as $y$ and $z$. The sublattices $(Ry)^{\perp}$
of $M$ and $(Rz)^{\perp}$ of $N$, together with this element $x$, satisfy the
conditions of Proposition \ref{cancr1} (indeed, $M \cong L\oplus(Ry)^{\perp}$
and $N \cong L\oplus(Rz)^{\perp}$ are isomorphic, and the valuation condition
holds by our choice of $y$ and $z$), so that $(Ry)^{\perp} \cong (Rz)^{\perp}$
by that proposition. Applying Lemma \ref{sublat} to $M_{1}$ with $y$ and to
$N_{1}$ with $z$ yields the decompositions $M_{1}=Ry \oplus
(Ry)^{\perp}_{M_{1}}$ and $N_{1}=Rz \oplus (Rz)^{\perp}_{N_{1}}$, where
the orthogonal complements $(Ry)^{\perp}_{M_{1}}$ and $(Ry)^{\perp}_{N_{1}}$ are
uni-valued with valuation $v(M_{1})=v(N_{1})$. Therefore we can write
$(Ry)^{\perp}=(Ry)^{\perp}_{M_{1}}\oplus\bigoplus_{k=2}^{t}M_{k}$ and
$(Rz)^{\perp}=(Rz)^{\perp}_{N_{1}}\oplus\bigoplus_{k=2}^{s}N_{k}$ as uni-valued
decompositions with increasing valuations, and the induction hypothesis implies
$t=s$, $M_{k} \cong N_{k}$ for $k\geq2$, and
$(Ry)^{\perp}_{M_{1}}\cong(Rz)^{\perp}_{N_{1}}$. As $Ry \cong Rz$ as well, we
deduce also $M_{1} \cong N_{1}$, which completes the proof of the theorem.
\end{proof}

The Witt Cancellation Theorem for 2-Henselian valuation rings in which 2 is
invertible follows as

\begin{cor}
Let $M$, $N$, and $L$ be three lattices over such a ring $R$. If $M \oplus L
\cong N \oplus L$ then $M \cong N$. \label{WittCan}
\end{cor}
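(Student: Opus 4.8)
The plan is to reduce the general statement to the rank-one cancellation already furnished by Proposition \ref{cancr1}. The obstruction is that Proposition \ref{cancr1} can only strip off a rank one lattice whose norm has \emph{minimal} valuation, so a naive ``decompose $L$ into lines and cancel them one by one'' fails as soon as some summand of $L$ has valuation larger than $v(M)$ or $v(N)$. I would circumvent this by first splitting everything into its Jordan layers, where all the relevant norms share a single valuation, so that the valuation hypothesis of Proposition \ref{cancr1} becomes automatic.

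Concretely, I first decompose $M$, $N$, and $L$ by Proposition \ref{decom}, grouping the uni-valued summands by their valuation: I write $M=\bigoplus_\gamma M_\gamma$, $N=\bigoplus_\gamma N_\gamma$, and $L=\bigoplus_\gamma L_\gamma$, where for each value $\gamma$ that occurs the summand $X_\gamma$ is uni-valued with $v(X_\gamma)=\gamma$ (and $X_\gamma=0$ otherwise). The elementary point I must check is that an orthogonal sum of uni-valued lattices all of valuation $\gamma$ is again uni-valued of valuation $\gamma$: writing each as $U(\sigma)$ with $v(\sigma)=\gamma$, any two of the scalars differ by a unit, so after absorbing that unit into the (still unimodular) first factor the whole sum is $\big(\bigoplus_i U_i\big)(\sigma_\gamma)$ for one fixed $\sigma_\gamma$ of valuation $\gamma$. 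Consequently $M\oplus L=\bigoplus_\gamma(M_\gamma\oplus L_\gamma)$ and $N\oplus L=\bigoplus_\gamma(N_\gamma\oplus L_\gamma)$ are genuine Jordan decompositions, with uni-valued components of strictly increasing valuations. Since $M\oplus L\cong N\oplus L$, Theorem \ref{isocomp} applies value by value and yields
\[ M_\gamma\oplus L_\gamma\cong N_\gamma\oplus L_\gamma \qquad\text{for every }\gamma. \]

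It then suffices to cancel $L_\gamma$ inside each single layer, that is, to deduce $M_\gamma\cong N_\gamma$. Here every lattice in sight is uni-valued of valuation $\gamma$, so its own valuation equals $\gamma$, and by Corollary \ref{diag} (recall $2\in R^{*}$) the lattice $L_\gamma$ has an orthogonal basis $x_1,\dots,x_m$ with $v(x_i^{2})=\gamma$ for each $i$. I would cancel the lines $Rx_i$ one at a time: at each step the two lattices still standing are orthogonal sums of uni-valued valuation-$\gamma$ lattices, hence have valuation exactly $\gamma$, so the hypothesis $v(x_i^{2})=\gamma\le\min\{v(\text{left}),v(\text{right})\}$ of Proposition \ref{cancr1} is satisfied and that proposition removes $Rx_i$ from both sides. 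After $m$ such steps only $M_\gamma\cong N_\gamma$ remains (the degenerate cases $L_\gamma=0$ or $M_\gamma=0$ being handled uniformly by the same value-by-value comparison).

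Finally, reassembling the layers gives $M=\bigoplus_\gamma M_\gamma\cong\bigoplus_\gamma N_\gamma=N$, as desired. The only delicate part is the bookkeeping of the second paragraph—verifying that the valuation-grouped splitting really is a Jordan decomposition so that Theorem \ref{isocomp} may legitimately be invoked layer by layer; once the problem has been confined to a single valuation the restriction in Proposition \ref{cancr1} evaporates, and the remainder is a routine induction on the rank of $L_\gamma$.
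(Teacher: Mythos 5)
Your proof is correct and follows essentially the same route as the paper: decompose $M$, $N$, $L$ into valuation-matched Jordan layers, apply Theorem \ref{isocomp} layer by layer, and then cancel $L_\gamma$ within each layer by iterating Proposition \ref{cancr1} over an orthogonal basis of $L_\gamma$ furnished by Corollary \ref{diag} (this is exactly the second of the two cancellation arguments the paper offers). Your extra verification that an orthogonal sum of uni-valued lattices of a common valuation is again uni-valued is a detail the paper leaves implicit, and it is correct.
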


\begin{proof}
Write $M=\bigoplus_{k=1}^{t}M_{k}$, $N=\bigoplus_{k=1}^{t}N_{k}$, and
$L=\bigoplus_{k=1}^{t}L_{k}$ as in Proposition \ref{decom}, where we assume that
$v(M_{k})=v(N_{k})=v(L_{k})$ for any $1 \leq k \leq t$ (allowing empty
components if necessary for such a unified notation). Then we can write $M
\oplus L$ and $N \oplus L$ as $\bigoplus_{k=1}^{t}(M_{k} \oplus L_{k})$ and
$\bigoplus_{k=1}^{t}(N_{k} \oplus L_{k})$, both being decompositions to
uni-valued lattices of increasing valuations. Theorem \ref{isocomp} yields
$M_{k} \oplus L_{k} \cong N_{k} \oplus L_{k}$ for any $1 \leq k \leq
t$, and we claim that $M_{k} \cong N_{k}$ for every such $k$. One way to
establish this assertion is by dividing the bilinear forms on $M_{k}$, $N_{k}$,
and $L_{k}$ by a suitable element of $R$ to make them unimodular, and then use
the Witt Cancellation Theorem for unimodular lattices proved in Theorem 4.4 of
\cite{[MH]}. Alternatively, $L_{k}$ has an orthogonal basis by Corollary
\ref{diag}, and we can ``cancel'' these basis elements iteratively using
Proposition \ref{cancr1} since the valuation condition is satisfied. In any
case, this assertion implies
$M=\bigoplus_{k=1}^{t}M_{k}\cong\bigoplus_{k=1}^{t}N_{k}=N$ as desired.
\end{proof}

Theorem \ref{isocomp} generalizes Theorem 4 of \cite{[D]} to (2-Henselian)
valuation rings of arbitrary rank, and Corollary \ref{WittCan} generalizes
Theorem 2 of \cite{[J1]} and Theorem 5 of \cite{[D]} to this case, again with
simplified proofs.

There are classical examples showing that without the condition $2 \in R^{*}$,
both Theorem \ref{isocomp} and Corollary \ref{WittCan} no longer hold. Over
$\mathbb{Z}_{2}$ the lattice $M$ generated by two orthogonal elements of norms
1 and 2 is isomorphic to the lattice $N$ having an orthogonal basis consisting
of vectors of norms 3 and 6. Since $v(1)=v(3)=0$ and $v(2)=v(6)=1$ but
$\frac{3}{1}=\frac{6}{2}=3\notin(\mathbb{Z}_{2}^{*})^{2}$, this example
demonstrates that Theorem \ref{isocomp} fails for $R=\mathbb{Z}_{2}$ (for the
analysis of $\mathbb{Z}_{2}$-lattices, see \cite{[J2]}). Still over
$\mathbb{Z}_{2}$, taking $x^{2}=y^{2}=0$ and $(x,y)=z^{2}=t=1$ in the proof of
Corollary \ref{diag} shows that adding an orthogonal norm 1 vector to the
hyperbolic plane generated by two norm 0 vectors $x$ and $y$ with $(x,y)=1$
(denoted $M_{0,0}$ in the notation of Sections \ref{Uni2} and \ref{Inv}) one
obtains a lattice admitting an orthonormal basis consisting of three elements
with norms 1, 1, and $-1$ respectively, which gives a counter-example to
Corollary \ref{WittCan} over $\mathbb{Z}_{2}$. The 2-Henselianity is also
important: Consider the ring $\mathbb{Z}_{p\mathbb{Z}}$ obtained from
$\mathbb{Z}$ by localizing in the prime ideal $p\mathbb{Z}$ for some odd prime
$p$. It is a discrete valuation ring with quotient field $\mathbb{F}_{p}$ of odd
characteristic, which is not complete. As in the first example over
$\mathbb{Z}_{2}$, the lattice generated by two orthonormal vectors of norms 1
and $p$ also admits an orthogonal basis whose norms are $1+p$ and $p+p^{2}$,
and unless $1+p$ is a square in $\mathbb{Z}$, this shows that Theorem
\ref{isocomp} does not hold over $\mathbb{Z}_{p\mathbb{Z}}$ (in case $1+p$ is a
square, one may use a similar argument with $1+t^{2}p$ for other
$t\in\mathbb{Z}$ fur this purpose). As for the general Witt Cancellation Theorem
for valuation rings which are not 2-Henselian but with $2 \in R^{*}$, finding
counter-examples seems complicated, in view of Proposition \ref{cancr1} and the
fact that the Witt Cancellation Theorem holds in general when when $M$, $N$, and
$L$ all have rank 1. We leave this question for further research.

\medskip

Theorem \ref{isocomp} shows that the classification of general $R$-lattices (for
$R$ a 2-Henselian valuation ring with $2 \in R^{*}$) reduces to the
classification of unimodular $R$-lattices in the following sense. For every
$v\geq0$ fix some element $\sigma_{v} \in R$ with valuation $v$, with
$\sigma_{0}=1$. By Theorem \ref{isocomp} every $R$-lattice $M$ can be written
\emph{uniquely} up to an isomorphism as $\bigoplus_{v}M_{v}(\sigma_{v})$ with
$M_{v}$ unimodular such that $M_{v}=0$ for all but finitely many $v$. Moreover,
in this case the unimodular lattices are determined up to isomorphism by their
(non-degenerate) restriction to the residue field $\mathbb{F}$ of $R$ (see, for
example, Theorem \ref{approxiso}---note that $v(2)=0$ by our assumption on $R$
and $v(M_{t})=0$ by unimodularity). Hence, the classification of $R$-lattices
simplifies to the classification of lattices over the field $\mathbb{F}$, whose
characteristic differs from $2$, for which many methods have been developed. For
general fields this problem is not at all simple: For example, for $\mathbb{F}$
a global field the isomorphism classes depend on all the completions of
$\mathbb{F}$. However, if $\mathbb{F}$ is \emph{finite} then the isomorphism
class of an $\mathbb{F}$-lattice $M$ is determined by its rank and sign (i.e,
the image of the determinant of a Gram
matrix of a basis of $M$ in the order 2 group
$\mathbb{F}^{*}/(\mathbb{F}^{*})^{2}$)---see, for example, Proposition 5 in
Chapter IV of \cite{[Se1]}. Let $1^{\varepsilon n}$ denote a unimodular
$R$-lattice ($R$ as above, with a finite residue field $\mathbb{F}$) whose
restriction modulo $I_{0}$ has rank $n$ and sign
$\varepsilon\in\mathbb{F}^{*}/(\mathbb{F}^{*})^{2}\cong\{\pm1\}$. Using the
shorthand $\sigma_{v}^{\varepsilon n}$ for $1^{\varepsilon n}(\sigma_{v})$, we
have thus proved

\begin{prop}
Any isomorphism class of lattices over a 2-Henselian valuation ring $R$ with
finite residue $\mathbb{F}$ of odd characteristic contains a unique
representative of the form $\bigoplus_{v}\sigma_{v}^{\varepsilon_{v}n_{v}}$
(where the direct sum is finite). \label{symbol}
\end{prop}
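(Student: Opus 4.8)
The plan is to combine the structural decomposition from Section~\ref{Jordan} with the classification of the unimodular pieces and then establish uniqueness via Theorem~\ref{isocomp}. First I would establish existence of a representative of the claimed form. By Proposition~\ref{decom} any lattice $M$ decomposes as $\bigoplus_{k=1}^{t}M_{k}$ with the $M_{k}$ uni-valued and $v(M_{k})<v(M_{k+1})$. Writing each $M_{k}=L_{k}(\sigma_{v(M_{k})})$ with $L_{k}$ unimodular (as in the remark following Theorem~\ref{isocomp}, reindexing the sum over the valuations $v$ that actually occur), it remains to identify each unimodular $L_{k}$ with a standard piece $1^{\varepsilon_{v}n_{v}}$. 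Here I would invoke the fact, recalled just before the statement, that a unimodular lattice over such $R$ is determined by its reduction modulo $I_{0}$ to the residue field $\mathbb{F}$, together with the classification of $\mathbb{F}$-lattices (Proposition~5 in Chapter~IV of \cite{[Se]}) by rank and sign. This produces the desired form $\bigoplus_{v}\sigma_{v}^{\varepsilon_{v}n_{v}}$.

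Next I would prove uniqueness. Suppose $\bigoplus_{v}\sigma_{v}^{\varepsilon_{v}n_{v}}\cong\bigoplus_{v}\sigma_{v}^{\varepsilon'_{v}n'_{v}}$. These are two decompositions into uni-valued lattices of strictly increasing valuation (after discarding any $v$ with $n_{v}=0$), so Theorem~\ref{isocomp} applies directly: it forces the valuation-indexed components to match, giving $n_{v}=n'_{v}$ for every $v$ (equality of ranks of corresponding pieces) and an isomorphism $\sigma_{v}^{\varepsilon_{v}n_{v}}\cong\sigma_{v}^{\varepsilon'_{v}n_{v}}$ of the matching uni-valued summands. Dividing the bilinear form on these matching summands by $\sigma_{v}$ yields an isomorphism of the underlying unimodular lattices $1^{\varepsilon_{v}n_{v}}\cong1^{\varepsilon'_{v}n_{v}}$, and reducing modulo $I_{0}$ gives isomorphic $\mathbb{F}$-lattices. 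Since the sign is an isomorphism invariant of an $\mathbb{F}$-lattice (it is the class of the Gram determinant in $\mathbb{F}^{*}/(\mathbb{F}^{*})^{2}\cong\{\pm1\}$, preserved under the multiplication by $(R^{*})^{2}$ that reindexing introduces), we conclude $\varepsilon_{v}=\varepsilon'_{v}$. Hence the representative is unique.

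The main subtlety, rather than a deep obstacle, is bookkeeping: I must ensure that the passage between a uni-valued lattice $M_{k}$ and its ``normalized'' unimodular core $1^{\varepsilon_{v}n_{v}}$ is well-defined and isomorphism-respecting. Concretely, writing $M_{k}=L_{k}(\sigma_{v})$ depends on the choice of $\sigma_{v}$, but changing $\sigma_{v}$ by a unit only multiplies the Gram determinant of $L_{k}$ by a square in $R^{*}$, so the sign $\varepsilon_{v}\in\mathbb{F}^{*}/(\mathbb{F}^{*})^{2}$ is genuinely well-defined. The reduction-modulo-$I_{0}$ step is non-degenerate precisely because $L_{k}$ is unimodular, which is what lets me transfer the $\mathbb{F}$-lattice classification back up to $R$; the invariance under the $2$-Henselian lifting is exactly the content of Theorem~\ref{approxiso} (with $v(2)=0$ and $v(M_{t})=0$ here), so no separate argument is needed. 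Everything else is a direct assembly of Proposition~\ref{decom}, Theorem~\ref{isocomp}, and the finite-field classification, so I expect no step to require genuinely new input beyond careful tracking of the reindexing and of the sign invariant.
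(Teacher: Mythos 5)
Your proposal is correct and follows essentially the same route as the paper: existence via Proposition~\ref{decom} together with the identification of each uni-valued piece with some $\sigma_{v}^{\varepsilon_{v}n_{v}}$ (using Theorem~\ref{approxiso} to reduce to the residue field and the finite-field classification by rank and sign), and uniqueness via Theorem~\ref{isocomp}. The paper's proof is just a terser version of this, deferring the identification of uni-valued lattices to the paragraph preceding the proposition, so your added bookkeeping about the choice of $\sigma_{v}$ and the well-definedness of the sign is a faithful expansion rather than a different argument.
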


\begin{proof}
The existence of such a representative follows from Proposition \ref{decom} and
the fact that every uni-valued $R$-lattice is (up to isomorphism) of the form
$\sigma_{v}^{\varepsilon_{v}n_{v}}$ for unique $v$, $n_{v}$, and
$\varepsilon_{v}$ (see the previous paragraph). The uniqueness follows from
Theorem \ref{isocomp}. This proves the proposition.
\end{proof}

In the case where $R$ is a \emph{discrete} valuation ring we can take
$\sigma_{v}$ for $v\in\mathbb{N}$ to by the $v$th power of a uniformizer $\pi$
of $R$. In particular, Proposition \ref{symbol} yields the known symbols for
lattices over $p$-adic rings (for odd $p$), but it holds in much greater
generality.

\section{Unimodular Rank 2 Lattices \label{Uni2}}

Corollary \ref{diag} implies that there are only two types of lattices over a
valuation ring $R$ which are \emph{indecomposable} (namely, cannot be written
as the orthogonal direct sum of two proper sublattices). Such a lattice is
either generated by one element, or is spanned by two elements $x$ and $y$ such
that $v(x,y)$ is strictly smaller than the valuations of the norms of both $x$
and $y$. Both such lattices are uni-valued, hence (after fixing $a_{v}$ for each
possible valuation $v$) it suffices for the description of the isomorphism
classes of such lattices to consider just unimodular lattices. For rank 1
lattices the task is easy: Each isomorphism class of unimodular rank 1 lattices
corresponds to an element of $R^{*}/(R^{*})^{2}$, which is the norm of a
generator of a lattice in this isomorphism class. We consider classes modulo
$(R^{*})^{2}$ since multiplying the generator by $c \in R^{*}$ yields a
generator for the same module, with norm multiplied by $c^{2}$. In fact,
if the unimodularity assumption is relaxed, the isomorphism classes of
non-degenerate rank 1 lattices correspond to classes in
$(R\setminus\{0\})/(R^{*})^{2}$, and these assertions hold over any integral
domain $R$.

\smallskip

We now consider unimodular rank 2 lattices, in which the basis elements $x$ and
$y$ satisfy $(x,y) \in R^{*}$. In fact, every unimodular rank 2 lattice $L$
over a valuation ring admits such a basis: If for a given basis $x$ and $y$ of
$L$ we have $(x,y) \notin R^{*}$ (i.e., $(x,y) \in I_{0}$) then without loss of
generality unimodularity implies $v(x^{2})=0$. Thus, $x$ and $x+y$ form a basis
for $L$ such that $(x,x+y) \in R^{*}$. Multiplying $x$ or $y$ by an element of
$R^{*}$, we may assume $(x,y)=1$.

Given $\alpha$ and $\beta$ in $R$, we denote the rank 2 lattice spanned by
elements $x$ and $y$ with $x^{2}=\alpha$, $(x,y)=1$, and $y^{2}=\beta$ by
$M_{\alpha,\beta}$. Without loss of generality, we always assume $v(\alpha) \leq
v(\beta)$. An interesting question, which will be answered under some
assumptions in Section \ref{Inv} below, is finding necessary and sufficient
conditions on $\alpha$, $\beta$, $\gamma$ and $\delta$ such that
$M_{\alpha,\beta} \cong M_{\gamma,\delta}$. The present Section is devoted to
the description of the lattices $M_{\alpha,\beta}$, and is divided into three
subsections. In Subsection \ref{IsoGen} we classify the lattices $M_{\alpha,0}$
and prove a technical lemma which will later have various applications.
Subsection \ref{MaxVal} considers conditions for existence of a primitive
element with norm of maximal valuation in $M_{\alpha,\beta}$. Finally,
Subsection \ref{GenArfInv} defines the generalized Arf invariant of such a
lattice (under this maximality assumption on $\beta$) and proves that it is an
invariant of the isomorphism class of the lattice. Unless stated otherwise, $R$
is a 2-Henselian valuation ring.

\subsection{Isotropic Lattices and General Technicalities \label{IsoGen}}

Recall that a non-zero norm 0 vector is called \emph{isotropic}, and a lattice
is called \emph{isotropic} if it contains an isotropic vector. Our first
observation is

\begin{lem}
If $v(\alpha)+v(\beta)>2v(2)$ then $M_{\alpha,\beta}$ is isotropic. \label{iso}
\end{lem}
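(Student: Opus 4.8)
The plan is to exhibit an explicit isotropic vector inside $M_{\alpha,\beta}$. Writing $x$ and $y$ for the standard basis with $x^{2}=\alpha$, $(x,y)=1$, and $y^{2}=\beta$, I look for a vector of the shape $w=sx+y$ with $s\in R$, since any such $w$ is automatically nonzero (its $y$-coordinate is $1$). Its norm is $w^{2}=\alpha s^{2}+2s+\beta$, so the problem reduces to solving the quadratic equation $\alpha s^{2}+2s+\beta=0$ with a root lying in $R$.

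This is precisely the situation handled by Lemma \ref{quadeq}: taking $A=\alpha$, $B=2$, and $C=\beta$, the hypothesis of that lemma reads $v(AC)>2v(B)$, which is exactly the assumed inequality $v(\alpha)+v(\beta)>2v(2)$. Here the 2-Henselianity of $R$ is what makes the discriminant $B^{2}-4AC$ a square and places the roots in $\mathbb{K}$ rather than in a proper extension. The lemma then produces a root $s\in\mathbb{K}$ of valuation $v\big(\frac{\beta}{2}\big)=v(\beta)-v(2)$.

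It remains to check that this root lies in $R$, i.e.\ that $v(s)\geq0$, equivalently $v(\beta)\geq v(2)$. This is where I invoke the standing convention $v(\alpha)\leq v(\beta)$: combined with $v(\alpha)+v(\beta)>2v(2)$ it forces $2v(\beta)\geq v(\alpha)+v(\beta)>2v(2)$, hence $v(\beta)>v(2)$ and indeed $s\in I_{0}\subseteq R$. Then $w=sx+y$ is a nonzero vector of norm $0$, so $M_{\alpha,\beta}$ is isotropic.

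The only genuinely delicate point is the membership $s\in R$, and the valuation bookkeeping in the previous paragraph settles it. As a sanity check one can observe that the hypothesis guarantees at least one of $v(\alpha)$, $v(\beta)$ exceeds $v(2)$; if it is $v(\alpha)$ that is large one uses a vector of the form $x+ty$ instead, so the construction is symmetric in $x$ and $y$. Everything else is an immediate application of Lemma \ref{quadeq}.
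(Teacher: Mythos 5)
Your proof is correct and follows essentially the same route as the paper: both apply Lemma \ref{quadeq} with $A=\alpha$, $B=2$, $C=\beta$ to the norm $\alpha t^{2}+2t+\beta$ of a vector $y+tx$, and both use the convention $v(\alpha)\leq v(\beta)$ together with $v(\alpha)+v(\beta)>2v(2)$ to conclude $v(\beta)>v(2)$, so that the root of valuation $v\bigl(\frac{\beta}{2}\bigr)$ lies in $I_{0}\subseteq R$. The only difference is cosmetic (your closing remark about symmetry is unnecessary given the standing convention, but harmless).
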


\begin{proof}
Write $(y+tx)^{2}=\alpha t^{2}+2t+\beta$. Then the coefficients $A=\alpha$,
$B=2$, and $C=\beta$ satisfy the condition of Lemma \ref{quadeq}. Hence there
exists some $t\in\mathbb{K}$ which annihilates this expression, and the
inequality $v(C)>v(B)$ (which follows from $v(\alpha) \leq v(\beta)$ and
$v(\alpha)+v(\beta)>2v(2)$) implies that we can take $t \in R$ (and even $t \in
I_{0}$). The vector $y+tx$ of $M_{\alpha,\beta}$ is then isotropic, which
proves the lemma.
\end{proof}

We now prove another assertion about the possible values of norms of
elements of $M_{\alpha,\beta}$ having minimal valuation under certain
conditions. In the case $v(2)>0$ the \emph{Artin--Schreier map}
$\rho:\mathbb{F}\to\mathbb{F}$ is defined by $\rho(x)=x^{2}-x$. It is an
additive homomorphism on $\mathbb{F}$, whose kernel is the prime subfield
$\mathbb{F}_{2}\subseteq\mathbb{F}$, and its image is denoted
$\mathbb{F}_{AS}$. By some abuse of notation, the map from $R$ to $R$ defined by
the same formula $x \mapsto x^{2}-x$ will be also denoted $\rho$, though it is
no longer a homomorphism of additive groups. We denote $\rho(R)$ by $R_{AS}$.
First we need

\begin{lem}
If $y \in R$ then $y \in R_{AS}$ holds if and only if
$y+I_{0}\in\mathbb{F}_{AS}$. \label{RASFAS}
\end{lem}

\begin{proof}
If $y=\rho(x)$ then $y+I_{0}=\rho(x+I_{0})$. Conversely, if
$y+I_{0}=\rho(x+I_{0})$ for some $x \in R$ then substitute $t=s+x$ in the
polynomial $f(t)=t^{2}-t-y$ in order to obtain $g(s)=s^{2}-(1-2x)s+(x^{2}-x-y)$.
The coefficients $A=1$, $B=2x-1$, and $C=x^{2}-x-y$ satisfy $v(AC)>v(B^{2})$
since $v(C)>0$ and $v(A)=v(B)=0$ (recall that $2 \in I_{0}$). Hence Lemma
\ref{quadeq} yields a root $s \in I_{0} \subseteq R$ of this polynomial. The
element $x+s$ of $R$ (with the same $\mathbb{F}$-image as $x$) satisfies
$\rho(x+s)=y$, which completes the proof of the lemma.
\end{proof}

We now prove an important technical result which will be needed later.

\begin{lem}
Let $\alpha$ and $\beta$ be two elements of $R$ such that $v(\beta) \geq v(2)$
and $v(\beta) \geq v(\alpha)$. We define the set $T$ to be
$(R^{*})^{2}\cdot(\alpha+2R)$ in case $v(\beta)>v(2)$ and as
$(R^{*})^{2}\cdot\big(\alpha+\frac{4}{\beta}R_{AS}\big)$ if $v(\beta)=v(2)$.
$(i)$ If $v(\alpha) \geq v(2)$ and $v(\beta)>v(2)$ then $T=2R$, and this is the
set of all norms of elements in $M_{\alpha,\beta}$. $(ii)$ In the case
$v(2)>v(\alpha)$ the set $T$ consists of all the norms of elements of
$M_{\alpha,\beta}$ having minimal valuation, which then equals $v(\alpha)$.
$(iii)$ In the boundary case $v(\beta)=v(2)=v(\alpha)$ an
element lies in $T$ precisely when it is the norm of a primitive element of
$M_{\alpha,\beta}$. \label{norm2}
\end{lem}

\begin{proof}
First we show that an element of $R$ lies in $T$ if and only if it is the norm
of some element $z \in M_{\alpha,\beta}$ of the form $z=cx+dy$ with $c \in
R^{*}$. Indeed, such an element of $M_{\alpha,\beta}$ can be written as
$c(x+sy)$ with $s \in R$, and its norm $c^{2}(\alpha+2s+s^{2}\beta)$ is of the
form $c^{2}(\alpha+2r)$ since $2|\beta$. Moreover, if $v(\beta)=v(2)$ then by
writing $s=-\frac{2}{\beta}x$ we find that
$r=\frac{2}{\beta}\rho(x)\in\frac{2}{\beta}R_{AS}$. Conversely, given $r$ and
$c$ we need to show that $c^{2}(\alpha+2r)$ can be obtained as the norm of such
$z \in M_{\alpha,\beta}$. By writing $z=c(x+sy)$ again this assertion reduces to
finding $s \in R$ such that $r=s+s^{2}\frac{\beta}{2}$. Consider the polynomial
$f(s)=\frac{\beta}{2}s^{2}+s-r$. If $v(\beta)>v(2)$ and $r$ is arbitrary, then
the coefficients $A=\frac{\beta}{2} \in I_{0}$, $B=1 \in R^{*}$, and $C=-r \in
R$ satisfy the conditions of Lemma \ref{quadeq}, yielding a solution $s \in R$
(of valuation $v\big(\frac{C}{B}\big)=v(r)\geq0$). On the other hand, if
$v(\beta)=v(2)$ and $r\in\frac{2}{\beta}R_{AS}$ then the substitution
$s=-\frac{2}{\beta}t$ takes $f(s)$ to the form
$\frac{2}{\beta}\big(t^{2}-t+\frac{\beta}{2}r\big)$, which has a root $t$
by our assumption on $r$.

Now, if $v(\beta)>v(2)$ and $v(\alpha) \geq v(2)$ then $T=2R$, and since for
every element $z=ax+by \in M_{\alpha,\beta}$ the three terms $a^{2}\alpha$,
$2ab$, and $b^{2}\beta$ of $z^{2}$ are divisible by 2, we obtain $T=\{z^{2}|z
\in M_{\alpha,\beta}\}$. This proves $(i)$. On the other hand, assume
$v(\alpha)<v(2)$, and let $z \in M_{\alpha,\beta}$ be an element whose norm has
the same valuation as $\alpha$. We write $z=cx+dy$, and if $c \in I_{0}$ then
the three terms $c^{2}\alpha$, $2cd$, and $d^{2}\beta$ of $(cx+dy)^{2}$ lie in
$I_{v(\alpha)}$. Since this contradicts the assumption $v(z^{2})=v(\alpha)$ we
deduce $c \in R^{*}$, and we have already seen that $z^{2} \in T$ for such $z$.
This proves $(ii)$. It remains to consider the case $v(\beta)=v(2)=v(\alpha)$.
In this case a primitive element of $M_{\alpha,\beta}$ not considered in the
above paragraph takes the form $z=h(y+tx)$ with $h \in R^{*}$ and $t \in I_{0}$,
and satisfies $z^{2}=h^{2}(\beta+2t+t^{2}\alpha)$. But the element
$w=h\big[\big(\frac{2}{\alpha}+t\big)x-y\big]$ has the same norm as $z$ and the
coefficient of $x$ in $w$ is in $R^{*}$ (since $\frac{2}{\alpha} \in R^{*}$ and
$t \in I_{0}$), so that the norm of $z$ lies in $T$ for such $z$ as well. This
proves $(iii)$ and completes the proof of the lemma.
\end{proof}

We remark that the element $w \in M_{\alpha,\beta}$ defined at the end of the
proof of Lemma \ref{norm2} is the image of $z$ under the reflection with respect
to $x$, taking $u \in M_{\alpha,\beta}$ to $u-\frac{2(u,x)}{\alpha}x$. This
element is well-defined as an automorphism of $M_{\alpha,\beta}$ since
$\frac{2}{\alpha} \in R$, though it is not a reflection with respect to a
decomposition since $M_{\alpha,\beta}$ does not decompose as
$Rx\oplus(Rx)^{\perp}$ if $v(\alpha)>0$.

The proof of parts $(i)$ and $(ii)$ of Lemma \ref{norm2} allows us to classify
isotropic rank 2 unimodular lattices over any valuation ring (not necessarily
2-Henselian):

\begin{prop}
Let $R$ be any valuation ring. The lattices $M_{\alpha,0}$ and $M_{\gamma,0}$
are isomorphic if and only if $\gamma=c^{2}(\alpha+2r)$ for some $c \in R^{*}$
and $r \in R$. Therefore the isomorphism classes of isotropic unimodular rank 2
lattices are in one-to-one correspondence with the set $(R/2R)/(R^{*})^{2}$.
\label{r2rep0}
\end{prop}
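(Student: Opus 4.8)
The plan is to prove the biconditional in both directions, then deduce the parametrization of isomorphism classes. The lattice $M_{\alpha,0}$ is isotropic by construction ($y^2=0$), so both sides of the claimed equivalence concern isotropic unimodular rank $2$ lattices. First I would establish the ``if'' direction: assuming $\gamma=c^2(\alpha+2r)$, I must exhibit an explicit isomorphism $M_{\alpha,0}\to M_{\gamma,0}$. Writing $x,y$ for the standard basis of $M_{\alpha,0}$ (with $x^2=\alpha$, $(x,y)=1$, $y^2=0$), I would look for a new basis of the form $x'=c(x+sy)$ and $y'=c^{-1}y$ for a suitable $s\in R$. The key computations are $(x',y')=(x,y)=1$, $y'^2=0$, and $x'^2=c^2(\alpha+2s)$; setting $s=r$ gives $x'^2=\gamma$ exactly. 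Because $(x',y')=1$ and both norms match, the assignment $x'\mapsto x_\gamma$, $y'\mapsto y_\gamma$ extends to the desired isomorphism. Here I am essentially reusing the norm computation from the proof of part $(i)$ of Lemma \ref{norm2}, where norms of elements $c(x+sy)$ were shown to be exactly $c^2(\alpha+2s)$ when $2\mid\beta$ (and $\beta=0$ is the extreme case).

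For the ``only if'' direction, suppose $\phi:M_{\alpha,0}\to M_{\gamma,0}$ is an isomorphism. The image $\phi(x)$ is a primitive vector of $M_{\gamma,0}$ whose norm is $\alpha$. By Lemma \ref{norm2}$(i)$ applied to $M_{\gamma,0}$ (with $\beta=0$, so the hypothesis $v(\beta)>v(2)$ holds since $v(0)=\infty$), the set of norms realized by suitable vectors of $M_{\gamma,0}$ is exactly $T=(R^*)^2\cdot(\gamma+2R)$. I would argue that $\phi(x)$ must be of the form $cx_\gamma+dy_\gamma$ with $c\in R^*$: indeed, $\phi(x)$ is primitive, and if its $x_\gamma$-coefficient lay in $I_0$ then (in the case $v(\gamma)<v(2)$) its norm would have valuation strictly exceeding $v(\gamma)=v(\alpha)$, contradicting $v(\phi(x)^2)=v(\alpha)$; in the case $v(\gamma)\ge v(2)$ the set $T=2R$ already accounts for all norms, so $\alpha=\phi(x)^2\in 2R$ forces $v(\alpha)\ge v(2)$ and symmetry of the argument (or the reflection trick from the remark following Lemma \ref{norm2}) handles it. In all cases $\alpha\in T$, i.e. $\alpha=c^2(\gamma+2r')$ for some $c\in R^*$, $r'\in R$, which is equivalent (inverting the relation, using $c^{-2}\in(R^*)^2$) to $\gamma=\tilde c^{\,2}(\alpha+2\tilde r)$. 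The main obstacle is organizing the case distinction on $v(\alpha)$ versus $v(2)$ cleanly and checking that the relation ``$\gamma\in(R^*)^2(\alpha+2R)$'' is genuinely symmetric in $\alpha$ and $\gamma$; this symmetry is what makes the equivalence relation well-defined and is most transparently seen by noting that $\alpha+2R$ and $\gamma+2R$ determine the same coset of $R/2R$ up to squares.

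Finally, for the classification statement, I would observe that the two directions together show $M_{\alpha,0}\cong M_{\gamma,0}$ if and only if $\alpha$ and $\gamma$ define the same element of the quotient $(R/2R)/(R^*)^2$, where $(R^*)^2$ acts by multiplication on $R/2R$. It remains only to check that every class in $(R/2R)/(R^*)^2$ arises from some isotropic unimodular $M_{\alpha,0}$, which is immediate since any $\alpha\in R$ yields such a lattice. This establishes the claimed bijection between isomorphism classes and $(R/2R)/(R^*)^2$, completing the proof.
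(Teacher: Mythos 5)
Your proof is essentially correct and stays close to the paper's: the ``if'' direction is the same explicit basis change $x\mapsto c(x+ry)$, $y\mapsto c^{-1}y$, and the converse rests, as in the paper, on the description of norms of vectors with unit $x$-coefficient from Lemma \ref{norm2}. Two points need attention. First, Lemma \ref{norm2} is proved under the standing assumption that $R$ is 2-Henselian, while the proposition is asserted for an arbitrary valuation ring; you must observe (as the paper does at the outset of its proof) that the only use of 2-Henselianity there is in solving $\frac{\beta}{2}s^{2}+s=r$, which for $\beta=0$ degenerates to the linear equation $s=r$. Your ``if'' direction already does this implicitly by setting $s=r$, but the wholesale citation of the lemma in the converse should be qualified the same way. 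Second, in the case $v(\gamma)<v(2)$ your contradiction hinges on the parenthetical equality $v(\gamma)=v(\alpha)$, which you never justify. It does hold: every norm in $M_{\alpha,0}$ is $a^{2}\alpha+2ab$, so the minimal valuation of a norm of a nonzero vector of $M_{\alpha,0}$ is $\min\{v(\alpha),v(2)\}$ (attained by $x$ or by $x+y$); this quantity is an isomorphism invariant, and equating it with $\min\{v(\gamma),v(2)\}=v(\gamma)<v(2)$ forces $v(\alpha)=v(\gamma)$. With that inserted, $\varphi(x)$ is a vector of $M_{\gamma,0}$ whose norm has minimal valuation, its $x_{\gamma}$-coefficient is therefore a unit, $\alpha\in(R^{*})^{2}(\gamma+2R)$, and your inversion to $\gamma=\tilde c^{2}(\alpha+2\tilde r)$ is correct. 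For comparison, the paper organizes the converse differently: it tracks the image of the isotropic generator of $M_{\gamma,0}$, which must be one of the two families of primitive isotropic vectors $\frac{y}{c}$ or $\frac{1}{c}\big(y-\frac{2}{\alpha}x\big)$ of $M_{\alpha,0}$, and then computes the norm of a complementary basis vector in each case; your route through the norm of $\varphi(x)$ avoids classifying isotropic vectors but pays for it with the valuation comparison above.
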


\begin{proof}
The only place where we used the 2-Henselian property of $R$ in the proof of
Lemma \ref{norm2} was in our search for a solution to the equation
$\frac{\beta}{2}s^{2}+s=r$ for $r \in R$. But if $\beta=0$ then $s=r$ is a
solution, so that Lemma \ref{norm2} with $\beta=0$ holds over any valuation ring
$R$. Thus, if $x$ and $y$ form the basis for $M_{\alpha,0}$ as above then
$c(x+ry)$ and $\frac{y}{c}$ span the same lattice and define an isomorphism with
$M_{\gamma,0}$ for $\gamma=c^{2}(\alpha+2r)$. It remains to show that if
$M_{\alpha,0} \cong M_{\gamma,0}$ then $\gamma=c^{2}(\alpha+2r)$ for some $r \in
R$ and $c \in R^{*}$. If $2|\alpha$ then $\gamma$, as the norm of some element
of $M_{\alpha,0}$, is divisible by 2 (the proof of Lemma \ref{norm2} again),
which completes the proof for this case. Assume now $v(\alpha)<v(2)$. The
isomorphism from $M_{\gamma,0}$ to $M_{\alpha,0}$ takes the isotropic generator
of $M_{\gamma,0}$ to a primitive isotropic vector $w \in M_{\alpha,0}$, which
can be either $\frac{y}{c}$ or $\frac{1}{c}(y-\frac{2}{\alpha}x\big)$ for $c \in
R^{*}$. The other basis vector of $M_{\gamma,0}$ must be taken to some $z \in
M_{\alpha,0}$ with $(z,w)=1$. In the first case we have $z=c(x+ry)$ for some $r
\in R$, and the norm of $z$ is $c^{2}(\alpha+2r)$ as shown above. Otherwise
$w=\frac{1}{c}\big(y-\frac{2}{\alpha}x\big)$, and writing $z=ax+by$ with $a$
and $b$ in $R$, the equality $(z,w)=1$ implies $a=-c-\frac{2}{\alpha}b$.
It follows that $z$ takes the form $-cx+b\big(y-\frac{2}{\alpha}x\big)$ for some
$b \in R$, and its norm $c^{2}\alpha-2bc$ also has the asserted form. This
proves the proposition.
\end{proof}

The reflection with respect to $x$ appears implicitly also in the proof of
Proposition \ref{r2rep0}, since it takes the element $\frac{y}{c}$ of
$M_{\alpha,0}$ to $\frac{1}{c}\big(y-\frac{2}{\alpha}x\big)$.

We remark that if $\alpha \in R^{*}$ then $M_{\alpha,0}$ is decomposable, since
the elements $x$ and $t=x-\alpha y$ are orthogonal and have the norms $\alpha$
and $-\alpha$ respectively. Conversely, a direct sum of two unimodular rank 1
lattices which is isotropic must be of this form: If $z$ and $w$ are
perpendicular and have norms in $R^{*}$ then for some combination $az+bw$ to
have be isotropic we must have $a^{2}z^{2}=-b^{2}w^{2}$. Hence $v(a)=v(b)$, and
by replacing $w$ by $u=\frac{b}{a}w$ we obtain a generator $u$ for $Rw$ such
that $u^{2}=-z^{2}$. Therefore Proposition \ref{r2rep0} implies the following

\begin{cor}
For $\alpha \in R^{*}$ ($R$ being any valuation ring) denote $H_{\alpha,0}$ the
lattice generated by two orthogonal elements of norms $\alpha$ and $-\alpha$.
Given $\alpha$ and $\gamma$ in $R^{*}$, the relation $H_{\alpha,0} \cong
H_{\gamma,0}$ holds if and only if $\gamma=c^{2}\alpha+2r$ for $c \in R^{*}$ and
$r \in R$. \label{hypiso}
\end{cor}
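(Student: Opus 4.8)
The plan is to reduce the assertion about the orthogonal-sum lattices $H_{\alpha,0}$ to the assertion about the lattices $M_{\alpha,0}$ already settled in Proposition \ref{r2rep0}. The bridge is precisely the remark preceding the corollary: for $\alpha \in R^{*}$ the vectors $x$ and $t=x-\alpha y$ of $M_{\alpha,0}$ are orthogonal with $x^{2}=\alpha$ and $t^{2}=-\alpha$, and since $\alpha \in R^{*}$ the relation $y=(x-t)/\alpha$ shows that $\{x,t\}$ is again a basis of the underlying free module. Hence $M_{\alpha,0} \cong H_{\alpha,0}$, and likewise $M_{\gamma,0} \cong H_{\gamma,0}$. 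So I would first record this identification, noting also that $H_{\alpha,0}$ is unimodular (its Gram matrix is $\mathrm{diag}(\alpha,-\alpha)$, of determinant $-\alpha^{2}\in R^{*}$) and isotropic ($x+t$ is a nonzero vector of norm $\alpha-\alpha=0$), so that it genuinely falls within the scope of Proposition \ref{r2rep0}.

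With this in hand the argument is immediate. By Proposition \ref{r2rep0} we have $H_{\alpha,0} \cong H_{\gamma,0}$ if and only if $M_{\alpha,0} \cong M_{\gamma,0}$, which holds if and only if $\gamma=c^{2}(\alpha+2r)$ for some $c \in R^{*}$ and $r \in R$. The only remaining task is to rewrite this in the stated form: expanding gives $\gamma=c^{2}\alpha+2c^{2}r$, and since $c\in R^{*}$ the map $r\mapsto c^{2}r$ is a bijection of $R$ onto itself, so $c^{2}(\alpha+2r)$ runs over exactly the same set of values as $c^{2}\alpha+2r'$ as $r'$ ranges over $R$. Thus the condition $\gamma=c^{2}(\alpha+2r)$ is equivalent to $\gamma=c^{2}\alpha+2r$ with $c\in R^{*}$ and $r\in R$, which is the asserted criterion.

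There is essentially no deep obstacle here; the content is carried entirely by the preceding remark and by Proposition \ref{r2rep0}, and the whole proof should occupy only a few lines. The one point that deserves genuine care---the ``main obstacle'', such as it is---is verifying that the identification $M_{\alpha,0}\cong H_{\alpha,0}$ is an honest lattice isomorphism, i.e.\ that $\{x,\,x-\alpha y\}$ truly is a module basis (this is exactly where $\alpha\in R^{*}$ is used) rather than merely an orthogonal pair spanning a proper sublattice. Once that is checked, everything else is the routine bookkeeping with the unit factor $c^{2}$ described above.
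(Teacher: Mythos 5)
Your proof is correct and follows exactly the route of the paper: identify $H_{\alpha,0}$ with $M_{\alpha,0}$ via the orthogonal pair $\{x,\,x-\alpha y\}$ (a genuine basis since the transition matrix has determinant $-\alpha\in R^{*}$) and then invoke Proposition \ref{r2rep0}. The only addition is your explicit rewriting of $c^{2}(\alpha+2r)$ as $c^{2}\alpha+2r'$, a small bookkeeping step the paper leaves implicit.
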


\begin{proof}
The previous paragraph shows that for any $\alpha \in R^{*}$ the lattices
$H_{\alpha,0}$ and $M_{\alpha,0}$ are isomorphic. Hence the assertion follows
from Proposition \ref{r2rep0}.
\end{proof}

In particular, if $v(2)=0$ then $R/2R$ is trivial, and Proposition \ref{r2rep0}
implies that every isotropic unimodular rank 2 lattice over $R$ is a hyperbolic
plane (namely, a lattice isomorphic to $M_{0,0}$ in the notation of Proposition
\ref{r2rep0}). This statement is in correspondence with the results at the end
of Section \ref{Unique}, since the same assertion holds over $\mathbb{F}$ if
the characteristic of $\mathbb{F}$ is not 2. Corollary \ref{hypiso} implies in
this case that all the lattices of the form $H_{\alpha,0}$ with $\alpha \in
R^{*}$ are isomorphic. On the other hand, if $v(2)>0$ then the elements of
$(I_{0}/2R)/(R^{*})^{2}$ correspond to \emph{indecomposable} isotropic
unimodular lattices of rank 2. If $v(2)=0$ then $I_{0}/2R$ is not well-defined
and can be considered as the empty set (since $I_{0}$ is the complement of
$R^{*}$ and the image of $R^{*}$ in $R/2R$ is the entire set $R/2R$), which
corresponds to the fact that there exist no indecomposable rank 2 lattices in
this case (Corollary \ref{diag} again).

\subsection{Primitive Vectors with Norms of Maximal Valuation \label{MaxVal}}

The anisotropic case is more delicate. Lemma \ref{iso} allows us to restrict
attention to the case $v(\alpha)+v(\beta) \leq 2v(2)$ when we consider
anisotropic lattices. The following lemma will turn out useful in what follows.

\begin{lem}
If $v(\alpha)+v(\beta)\leq2v(2)$ then the valuation of the norm of a non-zero
element of $M_{\alpha,\beta}$ of the form $tx+sy$ with $t$ and $s$ in $R$ can
be evaluated as $\min\{v(t^{2}\alpha),v(s^{2}\beta)\}$, provided that the two
terms $t^{2}\alpha$ and $s^{2}\beta$ have different valuations. In case
$v(t^{2}\alpha)=v(s^{2}\beta)$ and $v(\alpha)+v(\beta)<2v(2)$, the valuation of
$(tx+sy)^{2}$ is larger than the common valuation of $t^{2}\alpha$ and
$s^{2}\beta$ if and only if $v(t^{2}\alpha+s^{2}\beta) \geq
v(t^{2}\alpha)=v(s^{2}\beta)$. \label{vnorm}
\end{lem}

\begin{proof}
The element in question has norm $t^{2}\alpha+2st+s^{2}\beta$, and we claim that
$v(2st)>\min\{v(t^{2}\alpha),v(s^{2}\beta)\}$ under our assumptions. If $s=0$
this is clear, so assume $s\neq0$. Now, assuming by contradiction that $v(2st)
\leq v(s^{2}\beta)$ and $v(2st) \leq v(t^{2}\alpha)$, we obtain the inequalities
$v\big(\frac{t}{s}\big) \leq v\big(\frac{\beta}{2}\big)$ and
$v\big(\frac{t}{s}\big) \geq v\big(\frac{2}{\alpha}\big)$, the combination of
which yields $v\big(\frac{\beta}{2}\big) \geq v\big(\frac{2}{\alpha}\big)$. But
the latter inequality implies $v(\alpha)+v(\beta)\geq2v(2)$, and this may occur
with $v(\alpha)+v(\beta)\leq2v(2)$ only if $v(\alpha)+v(\beta)=2v(2)$ and
$v(t^{2}\alpha)=v(s^{2}\beta)$, a case which we have excluded in our
assumptions. This establishes the claim. It follows that comparing
$v\big((sx+ty)^{2}\big)$ with $\min\{v(t^{2}\alpha),v(s^{2}\beta)\}$ is the same
as comparing $v(t^{2}\alpha+s^{2}\beta)$ with that minimum, which completes the
proof of the lemma.
\end{proof}

If $v(\alpha) \leq v(2)$ (and $v(\alpha) \leq v(\beta)$, as always), then $x$
is a primitive element of $M_{\alpha,\beta}$ whose norm has minimal valuation.
This is obvious, since the three terms appearing in the expansion of the norm
of any element $ax+by$ of $M$ have valuations of at least $v(\alpha)$. Maximal
valuation is a more complicated property, whose existence is guaranteed only
under some conditions on $R$ in Proposition \ref{maxv} below. We shall restrict
attention, from now on, only to those lattices $M_{\alpha,\beta}$ which do
contain such elements. Henceforth, \emph{we assume in our notation
$M_{\alpha,\beta}$ that $\beta$, as a norm of a primitive element of
$M_{\alpha,\beta}$, has maximal valuation}. For the characterization of such
lattices, we need a parity notion for elements of the value group $\Gamma$. We
call an element of $\Gamma$ \emph{even} if it is divisible by 2 in $\Gamma$
(namely, if it is the valuation of an element of $(\mathbb{K}^{*})^{2}$), and
\emph{odd} otherwise. A distinguished class of elements of even valuation is
given in the following
\begin{defn}
An element $a$ of even valuation is called an \emph{approximate square} if $a
\equiv b^{2}(\mathrm{mod\ }I_{v(a)})$ for some $b \in R$. If $v(a)=0$ we call
$a$ a \emph{residual square}. Given an even valuation $u$, we let $J_{u}$ be the
union of the ideal $I_{u}$ with the set of approximate squares of valuation $u$.
\label{appsq}
\end{defn}
The condition that $v(a)$ is even in Definition \ref{appsq} is in fact
redundant, since no element of odd valuation can satisfy the required property
for being an approximate square. We claim that if $v(2)>0$ then the sets $J_{u}$
for even $u$ are additive subgroups of $R$. Indeed, $J_{0}$ is just the inverse
image of $\mathbb{F}^{2}$ (including 0) under the projection from $R$ to
$\mathbb{F}$; It is a group since $x \mapsto x^{2}$ is additive on a field of
characteristic 2 and $\mathbb{F}^{2}$ is just the image of this map. In the
general case we observe that if $a \in R$ has even valuation $u$ then $a$ is an
approximate square if and only if $\frac{a}{\sigma^{2}}$ is a residual square
for some (hence any) $\sigma \in R$ with $2v(\sigma)=u$. Hence
$J_{u}=\sigma^{2}J_{0}$ for any such $\sigma$, showing that it is also a group.
As $I_{u} \subseteq J_{u}$, we may allow ourselves the slight abuse of notation
by referring as approximate squares also to images in $R/I_{u}$ of approximate
squares of valuation $u$, and this remains well-defined. We remark that the
natural definition of $J_{u}$ for an odd valuation $u$ is just $I_{u}$, since
there are no approximate squares of valuation $u$.

In our examination of lattices $M_{\alpha,\beta}$ with primitive vectors of
norms with maximal valuation we shall distinguish among the cases
$v(\alpha)+v(\beta)<2v(2)$ and $v(\alpha)+v(\beta)=2v(2)$.

\begin{prop}
The case $v(\alpha)+v(\beta)<2v(2)$ and $v(\beta)$ is maximal occurs precisely
when the element $\frac{\beta}{\alpha}$ of $R$ is not an approximate square.
More explicitly, if $v(\alpha)+v(\beta)<2v(2)$ then $v(\beta)$ is maximal either
when $v\big(\frac{\beta}{\alpha}\big)$ is odd, or when
$v\big(\frac{\beta}{\alpha}\big)$ is even but
$\frac{\beta}{\alpha\sigma^{2}}+I_{0}$ is not in $(\mathbb{F}^{*})^{2}$ for some
(hence any) $\sigma \in R$ with $2v(\sigma)=v\big(\frac{\beta}{\alpha}\big)$,
but in no other case. \label{v<v4}
\end{prop}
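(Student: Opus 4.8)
The plan is to read off the maximality of $v(\beta)$ directly from the valuations of norms of primitive elements, and to show that this is governed exactly by whether $\beta/\alpha$ is an approximate square. First I would record the standing reduction: since $v(\alpha)+v(\beta)<2v(2)$ and all valuations are non-negative, we must have $v(2)>0$, so the residue field $\mathbb{F}$ has characteristic $2$ and in particular $-1\equiv1\ (\mathrm{mod\ }I_{0})$; this is what will later erase the sign in the cancellation computation. A primitive element of $M_{\alpha,\beta}$ has the form $z=tx+sy$ with $\min\{v(t),v(s)\}=0$, and $y$ itself is such an element, of norm $\beta$. The whole question is therefore whether some primitive $z$ satisfies $v(z^{2})>v(\beta)$; the assertion to prove is that this is possible \emph{precisely} when $\beta/\alpha$ is an approximate square, the stated maximality being the negation.

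Next I would dispose of the ``non-cancellation'' regime, where $v(t^{2}\alpha)\neq v(s^{2}\beta)$. Here Lemma \ref{vnorm} applies cleanly and gives $v(z^{2})=\min\{v(t^{2}\alpha),v(s^{2}\beta)\}$. Using $v(\alpha)\leq v(\beta)$ together with primitivity (one of $t,s$ lies in $R^{*}$) one checks that this minimum is always at most $v(\beta)$: if $s\in R^{*}$ then $v(s^{2}\beta)=v(\beta)$, while if $s\in I_{0}$ then $t\in R^{*}$ and $v(t^{2}\alpha)=v(\alpha)\leq v(\beta)$. Hence no primitive element can beat $v(\beta)$ in this regime, and the only chance to exceed $v(\beta)$ is through cancellation, i.e. $v(t^{2}\alpha)=v(s^{2}\beta)$.

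Finally I would treat the cancellation regime, which is the heart of the matter. The equality $v(t^{2}\alpha)=v(s^{2}\beta)$ forces $2\big(v(t)-v(s)\big)=v(\beta)-v(\alpha)=v\big(\frac{\beta}{\alpha}\big)$. If $v\big(\frac{\beta}{\alpha}\big)$ is odd this has no solution in $\Gamma$, so $v(\beta)$ is automatically maximal, matching the fact that $\beta/\alpha$ is then not an approximate square. If $v\big(\frac{\beta}{\alpha}\big)=2v(\sigma)$ is even, then $v(t)\geq v(s)$ forces $v(s)=0$, and after scaling by $s^{-1}\in R^{*}$ I may assume $s=1$ and $v(t)=v(\sigma)$. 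The key local estimate is that $v(2t)>v(\beta)$: indeed $v(2t)-v(\beta)=v(2)-v(\alpha)-v(\sigma)$, which is positive since $v(\alpha)+v(\beta)=2v(\alpha)+2v(\sigma)<2v(2)$. Because the cross term $2t$ is thus negligible, $z^{2}=t^{2}\alpha+2t+\beta$ satisfies $v(z^{2})>v(\beta)$ if and only if $v(t^{2}\alpha+\beta)>v(\beta)$, i.e. $t^{2}\alpha/\beta\equiv-1\ (\mathrm{mod\ }I_{0})$. Writing $t=\sigma u$ with $u\in R^{*}$, this reads $u^{2}\equiv-\frac{\beta}{\alpha\sigma^{2}}\ (\mathrm{mod\ }I_{0})$, and since $-1\equiv1$ it is solvable for a unit $u$ exactly when $\frac{\beta}{\alpha\sigma^{2}}+I_{0}\in(\mathbb{F}^{*})^{2}$, that is, when $\beta/\alpha$ is an approximate square (independence of the choice of $\sigma$ being clear, as changing $\sigma$ by a unit multiplies $\frac{\beta}{\alpha\sigma^{2}}$ by a square). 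Thus $v(\beta)$ fails to be maximal precisely when $\beta/\alpha$ is an approximate square, which is the contrapositive of the proposition.

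The delicate point---the main obstacle---is exactly the cross-term estimate $v(2t)>v(\beta)$ in the cancellation regime. Lemma \ref{vnorm} is effectively available only when $t^{2}\alpha$ and $s^{2}\beta$ have unequal valuation, so in the cancellation case one cannot simply quote it and must instead verify by hand that $2t$ does not interfere with the genuine cancellation in $t^{2}\alpha+\beta$. Securing this inequality, together with the parity bookkeeping in $\Gamma$ and the residue-characteristic-$2$ collapse of the sign, is what makes the two descriptions coincide.
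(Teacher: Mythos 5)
Your proof is correct and follows essentially the same route as the paper: reduce to primitive elements $y+tx$ with $2v(t)\geq v\big(\frac{\beta}{\alpha}\big)$ (the other primitive elements having norm of valuation at most $v(\beta)$), split on the parity of $v\big(\frac{\beta}{\alpha}\big)$, and in the even case read off maximality from whether $\frac{\beta}{\alpha\sigma^{2}}$ is a residual square, using $-1\equiv1\ (\mathrm{mod\ }I_{0})$. The only difference is cosmetic: where you verify the cross-term bound $v(2t)>v(\beta)$ by hand in the cancellation regime, the paper simply invokes Lemma \ref{vnorm}, whose hypothesis $v(\alpha)+v(\beta)<2v(2)$ is claimed to cover this case without the unequal-valuation proviso; your explicit check is a harmless (and arguably safer) extra precaution.
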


\begin{proof}
A primitive element $z \in M_{\alpha,\beta}$ takes either the form $c(y+tx)$
with $t \in R$ and $c \in R^{*}$ or the form $c(x+sy)$ with $s \in I_{0}$ and $c
\in R^{*}$. For the valuation of the norm of $z$, we can assume $c=1$. For the
norm of an element of the form $x+sy$, the fact that $v(\beta) \geq v(\alpha)$
and $s \in I_{0}$ allows us to apply Lemma \ref{vnorm}, which yields that the
valuation of this norm is just $v(\alpha)$. As for $z=y+tx$, Lemma \ref{vnorm}
shows that the valuation of $z^{2}$ is $\min\{v(\beta),v(t^{2}\alpha)\}$ unless
$v(\beta)=v(t^{2}\alpha)$ and the sum $\beta+t^{2}\alpha$ has larger valuation.
If $v\big(\frac{\beta}{\alpha}\big)$ is odd then the equality
$v(\beta)=v(t^{2}\alpha)$ cannot be achieved, hence any primitive $z$ has norm
of valuation at most $v(\beta)$. Otherwise we take $t=\sigma r$ for $\sigma$ as
above and $r \in R^{*}$, and $\beta+t^{2}\alpha$ has valuation larger than
$v(\beta)$ if and only if $1+r^{2}\cdot\frac{\alpha\sigma^{2}}{\beta} \in
I_{0}$. The maximality of $v(\beta)$ is thus equivalent to
$1+r^{2}\cdot\frac{\alpha\sigma^{2}}{\beta}$ being in $R^{*}$ for every such
$r$, and since 2 must be in $I_{0}$ to allow $v(\alpha)+v(\beta)<2v(2)$, the
latter condition forbids the image of $\frac{\beta}{\alpha\sigma^{2}}$ modulo
$I_{0}$ to belong to $(\mathbb{F}^{*})^{2}$. This proves the proposition.
\end{proof}

It is clear that Proposition \ref{v<v4} can be stated in terms of properties of
the product $\alpha\beta$ rather than the quotient $\frac{\beta}{\alpha}$.

\begin{cor}
If the residue field $\mathbb{F}$ of $R$ is perfect then the situation described
in Proposition \ref{v<v4} occurs only if $v\big(\frac{\beta}{\alpha}\big)$ is
odd. \label{v<v4perf}
\end{cor}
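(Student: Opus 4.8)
The plan is to rule out the even branch of Proposition \ref{v<v4} under the perfectness hypothesis, so that only the odd alternative can produce the situation described there. First I would record why the residue characteristic is forced to be $2$: the standing assumption $v(\alpha)+v(\beta)<2v(2)$ together with $\alpha,\beta\in R$ (so that $v(\alpha),v(\beta)\geq0$) gives $2v(2)>0$, hence $v(2)>0$. Thus $2\in I_{0}$ and the residue field $\mathbb{F}$ has characteristic $2$.

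Next I would invoke the characterization of perfect fields in this characteristic: a field $\mathbb{F}$ of characteristic $2$ is perfect exactly when its Frobenius endomorphism $a\mapsto a^{2}$ is surjective, equivalently when $(\mathbb{F}^{*})^{2}=\mathbb{F}^{*}$. In particular, over a perfect $\mathbb{F}$ \emph{every} nonzero element is a square. I would then examine the even branch of Proposition \ref{v<v4}, where $v\big(\frac{\beta}{\alpha}\big)$ is even and $\sigma\in R$ is chosen with $2v(\sigma)=v\big(\frac{\beta}{\alpha}\big)$. A one-line valuation count gives $v\big(\frac{\beta}{\alpha\sigma^{2}}\big)=v(\beta)-v(\alpha)-2v(\sigma)=0$, so $\frac{\beta}{\alpha\sigma^{2}}\in R^{*}$ and its image in $\mathbb{F}$ is a nonzero element. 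By perfectness this image automatically lies in $(\mathbb{F}^{*})^{2}$, which contradicts the very condition defining the even branch, namely that $\frac{\beta}{\alpha\sigma^{2}}+I_{0}\notin(\mathbb{F}^{*})^{2}$.

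Consequently the even branch is vacuous once $\mathbb{F}$ is perfect, and the maximality situation of Proposition \ref{v<v4} can be realized only through its odd branch, i.e.\ only when $v\big(\frac{\beta}{\alpha}\big)$ is odd, which is precisely the assertion of the corollary. I expect no genuine obstacle in this argument; the only point that deserves a moment's attention is the valuation computation confirming that $\frac{\beta}{\alpha\sigma^{2}}$ is a unit, and this is immediate from the defining property $2v(\sigma)=v\big(\frac{\beta}{\alpha}\big)$ of $\sigma$.
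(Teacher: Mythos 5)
Your argument is correct and is essentially identical to the paper's own proof: both observe that the residue characteristic is $2$, that perfectness means $(\mathbb{F}^{*})^{2}=\mathbb{F}^{*}$, and that this makes the even branch of Proposition \ref{v<v4} vacuous. The extra valuation computation you include (showing $\frac{\beta}{\alpha\sigma^{2}}\in R^{*}$) is a harmless elaboration of what the paper leaves implicit.
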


\begin{proof}
$\mathbb{F}$ is of characteristic 2, and it is perfect if and only if
$(\mathbb{F}^{*})^{2}=\mathbb{F}^{*}$. Hence the second setting in Proposition
\ref{v<v4} cannot occur in this case.
\end{proof}

Using Proposition \ref{v<v4}, we derive a condition on $R$ assuring the
existence of a primitive element whose norm has maximal valuation in every
$R$-lattice $M_{\alpha,\beta}$. We recall that an extension $\mathbb{L}$ of
$\mathbb{K}$ with a valuation $w$ on $\mathbb{L}$ such that
$w\big|_{\mathbb{K}}=v$ is called \emph{immediate} if $w(\mathbb{L})=\Gamma$ and
the quotient field $S/J_{0}$, with $S$ the valuation ring of $(\mathbb{L},w)$
and $J_{0}$ the maximal ideal in $S$, is isomorphic to $\mathbb{F}$.

\begin{prop}
Assume that $\mathbb{K}$ admits no quadratic immediate extensions. Then every
lattice $M_{\alpha,\beta}$ contains a primitive element whose norm has maximal
valuation. \label{maxv}
\end{prop}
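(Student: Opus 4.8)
The plan is to reduce the existence of a primitive element of maximal norm valuation to the attainment of a best approximation of a root of the quadratic $q(t)=\alpha t^{2}+2t+\beta$ (the norm of $y+tx$), and then to show that failure of that attainment would produce a quadratic immediate extension of $\mathbb{K}$, contradicting the hypothesis. First I would dispose of the isotropic case: if $M_{\alpha,\beta}$ contains an isotropic vector, then after dividing by its content it contains a \emph{primitive} vector of norm $0$, whose valuation $\infty$ is maximal. By the contrapositive of Lemma \ref{iso} we may therefore assume $v(\alpha)+v(\beta)\le 2v(2)$, so that $q$ is irreducible over $\mathbb{K}$. Recall that every primitive element of $M_{\alpha,\beta}$ is, up to a unit, either of the form $y+tx$ with $t\in R$ or of the form $x+sy$ with $s\in I_{0}$; by Lemma \ref{vnorm}, together with the inequality $v(\alpha)\le v(2)$ (which follows from the anisotropy bound and $v(\alpha)\le v(\beta)$), the latter elements all have norm of valuation exactly $v(\alpha)$, which is minimal. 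Hence the maximal valuation of the norm of a primitive element equals $\sup_{t\in R}v(q(t))$, and it suffices to prove that this supremum is attained.

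Next I would pass to a root. Let $\theta$ be a root of $q$ in the quadratic extension $\mathbb{L}=\mathbb{K}(\theta)$, and extend $v$ to a valuation $w$ on $\mathbb{L}$, which is unique since $R$ is $2$-Henselian. The other root $\theta'$ of $q$ then satisfies $w(\theta')=w(\theta)$, and from $\theta\theta'=\beta/\alpha$ one gets $w(\theta)=\tfrac12\big(v(\beta)-v(\alpha)\big)\ge 0$. Consequently, for every $t\in\mathbb{K}$ one has $v(q(t))=v(\alpha)+w(t-\theta)+w(t-\theta')=v(\alpha)+2w(\theta-t)$. A short computation using $w(\theta)\ge 0$ shows that $\sup_{t\in\mathbb{K}}v(q(t))=\sup_{t\in R}v(q(t))$ and that any maximizing $t$ automatically lies in $R$; thus the problem reduces to showing that $\sup_{t\in\mathbb{K}}w(\theta-t)$ is attained.

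The heart of the matter, and the step I expect to be the main obstacle, is to show that if this supremum were \emph{not} attained, then $\mathbb{L}/\mathbb{K}$ would be immediate. Suppose that for each $t\in\mathbb{K}$ there is $t'\in\mathbb{K}$ with $w(\theta-t')>w(\theta-t)$. The ultrametric identity then gives $w(\theta-t)=w\big((\theta-t)-(\theta-t')\big)=w(t'-t)\in\Gamma$, so that every value $w(\xi)$ of an element $\xi=b(\theta-t)\in\mathbb{L}^{*}$ lies in $\Gamma$; hence $w(\mathbb{L}^{*})=\Gamma$. For the residue field, given a unit $\eta=b(\theta-t)$ with $w(\eta)=0$, choosing $t'$ as above writes $\eta=b(t'-t)+b(\theta-t')$, where the first summand lies in $\mathbb{K}$ and has value $0$ while the second has positive value; thus the residue of $\eta$ equals that of $b(t'-t)\in\mathbb{K}$, and so lies in $\mathbb{F}$. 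Therefore $\mathbb{L}/\mathbb{K}$ is an immediate extension of degree $2$, contradicting the assumption that $\mathbb{K}$ admits no quadratic immediate extensions.

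Hence the supremum is attained, and the corresponding $t\in R$ produces a primitive element $y+tx$ whose norm has maximal valuation, as desired. The delicate points to handle carefully are the uniqueness of $w$ (so that the two roots of $q$ share a single valuation, which is what makes the formula $v(q(t))=v(\alpha)+2w(\theta-t)$ valid), and the verification that a maximizer lands in $R$ rather than merely in $\mathbb{K}$; the immediate-extension argument itself is short once the ``improve and subtract'' application of the ultrametric inequality is in place.
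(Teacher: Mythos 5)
Your proof is correct in its overall architecture but follows a genuinely different route from the paper's. The paper constructs, by transfinite recursion, a strictly improving family of elements $y+t_{\sigma}x$, shows that $\{t_{\sigma}\}$ is an algebraic pseudo-convergent sequence with no pseudo-limit in $R$, and then invokes Lemmas 12 and 19 of Section 2 of \cite{[Sch]} to manufacture the forbidden quadratic immediate extension. You instead adjoin a root $\theta$ of the norm form $q(t)=\alpha t^{2}+2t+\beta$ and show directly that if $\sup_{t}w(\theta-t)$ is not attained then $\mathbb{K}(\theta)/\mathbb{K}$ is immediate; your ``improve and subtract'' computation of the value group and residue field is clean and correct, and it makes the argument self-contained where the paper outsources the decisive step to the theory of pseudo-convergent sequences. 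Both proofs are incarnations of the same Ostrowski--Kaplansky principle, but yours exhibits the immediate extension explicitly as $\mathbb{K}(\theta)$.

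Two steps need repair. First, the identity $v(q(t))=v(\alpha)+2w(\theta-t)$ does not follow from $w(\theta)=w(\theta')$ alone: for a general quadratic, once $w(t-\theta)$ exceeds $d=w(\theta-\theta')$ one has $w(t-\theta')=d$, and $v(q(t))=v(\alpha)+w(t-\theta)+d$ grows linearly rather than quadratically in the distance to $\theta$ (compare $q(t)=(t-1)(t-p)$ over $\mathbb{Q}_{p}$ at $t=1+p^{10}$). You must either observe that anisotropy forces $w(t-\theta)\leq d$ for every $t\in\mathbb{K}$ --- if $w(t-\theta)>d$ then $\frac{(1+\alpha t)^{2}}{1-\alpha\beta}$ lies in $1+I_{2v(2)}$, hence is a square by Lemma \ref{quadeq}, so $1-\alpha\beta$ is a square and $q$ is reducible --- after which $w(t-\theta)=w(t-\theta')$ always and your formula is valid; or else work throughout with the symmetric quantity $w(t-\theta)+w(t-\theta')$, whose supremum is attained if and only if $\sup_{t}w(\theta-t)$ is. (Relatedly, the irreducibility of $q$ comes from anisotropy, not from the bound $v(\alpha)+v(\beta)\leq2v(2)$.) Second, you lean on the uniqueness of the extension $w$, which the paper never establishes for 2-Henselian rings; fortunately you do not need it, since for \emph{any} extension $w$ the relations $w(\theta)+w(\theta')=v\big(\frac{\beta}{\alpha}\big)$ and $w(\theta+\theta')=v\big(\frac{2}{\alpha}\big)$ together with $v(\alpha)+v(\beta)\leq2v(2)$ already force $w(\theta)=w(\theta')$, and the immediacy argument uses only the single chosen $w$. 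With these local fixes the proof is complete.
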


\begin{proof}
Let $M_{\alpha,\beta}$ be a lattice without such an element. First we observe
that $v(\alpha)+v(\beta)<2v(2)$. For if $v(\alpha)+v(\beta)>2v(2)$ then
$M_{\alpha,\beta}$ is isotropic by Lemma \ref{iso}, and the norm of an
isotropic vector has maximal valuation $\infty$. Moreover, if
$v(\alpha)+v(\beta)=2v(2)$ then the fact the $\beta$ is not maximal allows us
to find a primitive element of norm $\delta$ with $v(\delta)>v(\beta)$. This
implies $M_{\alpha,\beta} \cong M_{\gamma,\delta}$ for some $\gamma$ with
$v(\gamma) \geq v(\alpha)$, and we are again in the isotropic case.

Now, a primitive element of $M_{\alpha,\beta}$ whose norm has valuation larger
than $v(\alpha)$ must be of the form $z=c(y+tx)$ for some $c \in R^{*}$ (see the
second paragraph of the proof of Lemma \ref{norm2}), and again we can take $c=1$
since we are interested only in $v(z^{2})$. We now construct a sequence of
elements $z_{\sigma}=y+t_{\sigma}x$, for $\sigma$ in some maximal well-ordered
set $\Sigma$, whose norms $\beta_{\sigma}=z_{\sigma}^{2}$ satisfy
$v(\beta_{\tau})>v(\beta_{\sigma})$ for $\tau>\sigma$. We do this using
transfinite induction, starting with $t_{0}=0$, $z_{0}=y$, and
$\beta_{0}=\beta$. Assume that we constructed $z_{\sigma}$ for
$\sigma\in\Sigma$. If $\Sigma$ has a maximal element $\tau$, then we can find
some $t_{\tau+1}$ such that $z_{\tau+1}$ has norm $\beta_{\tau+1}$ with
valuation bigger than $v(\beta_{\tau})$ (since $\beta_{\tau}$ is not maximal).
Then the index $\tau+1$ increases $\Sigma$. If $\Sigma$ does not contain any
maximal element, but there exists some primitive element of $M_{\alpha,\beta}$
having norm with valuation exceeding $v(\beta_{\sigma})$ for every
$\sigma\in\Sigma$, then we saw that this element can be written as
$z_{\tau}=y+t_{\tau}x$, and we increase $\Sigma$ by adding $\tau$ as a new
maximal element. We stop when it is impossible to add further elements to
$\Sigma$. The fact that we take elements from a fixed lattice implies that this
transfinite process must terminate. We thus obtain a well-ordered set $\Sigma$,
having no last element, and $t_{\sigma}$ for each $\sigma\in\Sigma$ such that
$v(\beta_{\tau})>v(\beta_{\sigma})$ for every $\tau>\sigma$ in $\Sigma$, and
such that no primitive element of $M_{\alpha,\beta}$ has norm with valuation
exceeding $v(\beta_{\sigma})$ for every $\sigma\in\Sigma$.

We claim that $\{t_{\sigma}\}_{\sigma\in\Sigma}$ is an algebraic
pseudo-convergent sequence in $R$, with no pseudo-limit in $R$ (see Definitions
10, 12, and 15 in Section 2 of \cite{[Sch]}). Indeed, let $\sigma$ and $\tau$ be
elements of $\Sigma$ with $\sigma<\tau$, and write
$z_{\tau}=z_{\sigma}+(t_{\tau}-t_{\sigma})x$. As
$v(\alpha)+v(\beta_{\sigma})<2v(2)$, Lemma \ref{vnorm} implies that
$v(\beta_{\sigma})=v\big((t_{\tau}-t_{\sigma})^{2}\alpha\big)$, for otherwise
the condition $v(\beta_{\tau})>v(\beta_{\sigma})$ cannot be satisfied (the fact
that $(x,z_{\sigma})$ equals $1+t_{\sigma}\alpha$ rather than $1$ does not
affect the validity of Lemma \ref{vnorm}). As in the proof of Proposition
\ref{v<v4} we obtain that $v\big(\frac{\beta_{\sigma}}{\alpha}\big)$ is even
and equals $2v(t_{\tau}-t_{\sigma})$ for each $\sigma\in\Sigma$. Hence
$v(t_{\tau}-t_{\sigma})=\eta_{\sigma}$ with $\eta_{\sigma}\in\Gamma$ strictly
increasing with $\sigma$ (as
$2\eta_{\sigma}=v\big(\frac{\beta_{\sigma}}{\alpha}\big)$), which shows that
$\{t_{\sigma}\}_{\sigma\in\Sigma}$ is pseudo-convergent. Moreover,
$\beta_{\sigma}=f(t_{\sigma})$ for $f(t)=\beta+2t+\alpha t^{2}$, and since
$v(\beta_{\sigma})$ strictly increases with $\sigma$, the algebraicity of
$\{t_{\sigma}\}_{\sigma\in\Sigma}$ follows. Had this pseudo-convergent sequence
a pseudo-limit $s \in R$, a similar argument would show that $y+sx$ has norm
$\delta$ with $v(\delta) \geq v(\beta_{\sigma})$ for every $\sigma\in\Sigma$,
contrary to our assumption on $\Sigma$. Then Lemmas 12 and 19 of Section 2 of
\cite{[Sch]} yield a quadratic immediate extension $\mathbb{L}$ of
$\mathbb{K}$ generated by adding a pseudo-limit to this sequence, in
contradiction to our assumption on $\mathbb{K}$. This contradiction shows that
$M_{\alpha,\beta}$ must contain a primitive element with maximal valuation.
\end{proof}

In particular, Proposition \ref{maxv} shows that if $\mathbb{K}$ is
\emph{maximally complete} (i.e., admits no immediate extensions at all) then
every lattice $M_{\alpha,\beta}$ contains a primitive element of norm with
maximal valuation. The proof of Proposition \ref{maxv} also shows that any
lattice $M_{\alpha,\beta}$ contains such a primitive element in case the set of
positive $\gamma\in\Gamma$ which are smaller than $2v(2)$ is \emph{finite}
(e.g., when $\Gamma=\mathbb{Z}$), a fact which is also easily verified
directly.

\smallskip

The conditions for $v(\alpha)+v(\beta)=2v(2)>0$ are somewhat different.

\begin{prop}
If $v(\alpha)+v(\beta)=2v(2)$ then $v(\beta)$ is maximal if and only if
$\frac{\alpha\beta}{4} \in R^{*} \setminus R_{AS}$. \label{v=v4}
\end{prop}

\begin{proof}
The condition $v(\alpha)+v(\beta)=2v(2)$ implies that
$\varepsilon=-\frac{\alpha\beta}{4} \in R^{*}$. As in the proof of Proposition
\ref{v<v4}, we may restrict attention to norms of elements of the form $z=y+tx
\in M_{\alpha,\beta}$, and the norm of such an element has valuation either
$v(t^{2}\alpha)$ or $v(\beta)$ unless $v(t)=v\big(\frac{2}{\alpha}\big)$ (which
is equivalent here to $2v(t)=v\big(\frac{\beta}{\alpha}\big)$). We therefore
write $t=-\frac{2}{\alpha}s$ (with $s \in R^{*}$), and then
$z^{2}=\frac{4}{\alpha}(s^{2}-s-\varepsilon)$. As
$v\big(\frac{4}{\alpha}\big)=v(\beta)$, the valuation of $z^{2}$ exceeds
$v(\beta)$ if and only if $\varepsilon+I_{0}=\rho(s+I_{0})$. Since $z$ is
primitive and $s \in R$ is arbitrary, we find that $v(\beta)$ is maximal if and
only if $-\varepsilon+I_{0}=\varepsilon+I_{0}\notin\mathbb{F}_{AS}$, which is
equivalent, by Lemma \ref{RASFAS}, to the desired condition $-\varepsilon \notin
R_{AS}$. This proves the proposition.
\end{proof}

\begin{cor}
If $\rho$ is surjective (in particular if $\mathbb{F}$ is algebraically closed)
then any lattice $M_{\alpha,\beta}$ with $v(\alpha)+v(\beta)=2v(2)$ is
isotropic. \label{v=v4alg}
\end{cor}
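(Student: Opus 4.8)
The plan is to convert surjectivity of $\rho$ on $\mathbb{F}$ into the ring-level statement $R_{AS}=R$, and then to produce an isotropic vector explicitly by solving the very Artin--Schreier equation that already surfaces in the proof of Proposition \ref{v=v4}. First I would record that if $\rho:\mathbb{F}\to\mathbb{F}$ is surjective then $\mathbb{F}_{AS}=\mathbb{F}$ by definition of $\mathbb{F}_{AS}$ as the image of $\rho$. By Lemma \ref{RASFAS}, an element $y\in R$ lies in $R_{AS}$ as soon as $y+I_{0}\in\mathbb{F}_{AS}$; since now $\mathbb{F}_{AS}=\mathbb{F}$, this gives $R_{AS}=R$. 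In particular $-\varepsilon\in R_{AS}$ for every $\varepsilon\in R^{*}$.

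Next I would set up the norm computation. Given $M_{\alpha,\beta}$ with $v(\alpha)+v(\beta)=2v(2)$ and the standing convention $v(\alpha)\le v(\beta)$, I note that $2v(\alpha)\le v(\alpha)+v(\beta)=2v(2)$ forces $v(\alpha)\le v(2)$, so $\frac{2}{\alpha}\in R$. Writing $\beta=\frac{4}{\alpha}\varepsilon$ gives $\varepsilon\in R^{*}$, since $v(\varepsilon)=v(\alpha)+v(\beta)-v(4)=0$. For a vector $z=y+tx$ with $t=-\frac{2}{\alpha}s$ and $s\in R$, one computes exactly as in the proof of Proposition \ref{v=v4} that $z^{2}=\frac{4}{\alpha}\big(s^{2}-s+\varepsilon\big)$.

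Finally I would solve for $s$. Because $-\varepsilon\in R_{AS}$ by the first step, there is $s\in R$ with $\rho(s)=s^{2}-s=-\varepsilon$, i.e.\ $s^{2}-s+\varepsilon=0$. Then $t=-\frac{2}{\alpha}s\in R$ and the vector $z=y+tx$ has norm $z^{2}=\frac{4}{\alpha}\cdot 0=0$; as its $y$-coefficient equals $1$, the vector $z$ is nonzero. Hence $M_{\alpha,\beta}$ contains a nonzero isotropic vector and is isotropic. The parenthetical algebraically closed case is subsumed, since $x^{2}-x-c$ has a root in $\mathbb{F}$ for every $c\in\mathbb{F}$ when $\mathbb{F}$ is algebraically closed, so $\rho$ is surjective there.

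I do not expect a genuine obstacle: the single nonroutine point is upgrading a residue-field solution of the quadratic to an \emph{exact} root over $R$, and this is precisely the content of Lemma \ref{RASFAS} (itself a consequence of Lemma \ref{quadeq}). Everything else is bookkeeping, namely verifying $\frac{2}{\alpha}\in R$ and $\varepsilon\in R^{*}$ from the valuation hypothesis together with the one-line norm identity borrowed from Proposition \ref{v=v4}.
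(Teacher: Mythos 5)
Your proof is correct, but it finishes differently from the paper. Both arguments start from the same norm identity $z^{2}=\frac{4}{\alpha}(s^{2}-s+\varepsilon)$ for $z=y-\frac{2}{\alpha}sx$, taken from the proof of Proposition \ref{v=v4}. The paper, however, only uses surjectivity of $\rho$ on $\mathbb{F}$ to find $s$ with $v(s^{2}-s+\varepsilon)>0$; this shows $v(\beta)$ is not maximal, so $M_{\alpha,\beta}\cong M_{\gamma,\delta}$ with $v(\gamma)+v(\delta)>2v(2)$, and isotropy then follows from Lemma \ref{iso}. You instead upgrade the residue-level root to an exact root via Lemma \ref{RASFAS} (so $R_{AS}=R$), which hands you an exactly isotropic primitive vector in one step, with no detour through non-maximality. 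The two routes lean on 2-Henselianity in different places --- yours through Lemma \ref{RASFAS}, the paper's through Lemma \ref{iso}, both ultimately instances of Lemma \ref{quadeq} --- and yours is arguably the more self-contained and explicit of the two, while the paper's recycles machinery already on the table. Your bookkeeping ($v(\alpha)\leq v(2)$, hence $\frac{2}{\alpha}\in R$ and $\varepsilon\in R^{*}$, and primitivity of $z$ from its $y$-coefficient being $1$) is all in order.
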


\begin{proof}
The proof of Proposition \ref{v=v4} shows that $v(\beta)$ cannot be maximal,
since we can take $s \in R$ such that $v(s^{2}-s-\varepsilon)>0$. But this shows
that $M_{\alpha,\beta} \cong M_{\alpha,\delta}$ with $v(\delta)>v(\beta)$ (take
the basis $x$ and $\frac{y+tx}{1-2s}$ with $t=-\frac{2}{\alpha}s$ as in the
latter proof). Since $v(\alpha)+v(\delta)>2v(2)$, Lemma \ref{iso} shows that
this lattice is isotropic. This proves the corollary.
\end{proof}

\subsection{Generalized Arf Invariants \label{GenArfInv}}

Following Lemma \ref{iso} and Propositions \ref{v<v4} and \ref{v=v4}, we define,
under the assumption that $v(2)>0$, the following invariant of a unimodular rank
2 lattice containing a primitive element with norm of maximal
valuation.
\begin{defn}
Let $M_{\alpha,\beta}$ be a unimodular rank 2 lattice over $R$ such that $\beta$
has maximal valuation. We define the \emph{generalized Arf invariant} of
$M_{\alpha,\beta}$ as follows:
\begin{enumerate}
\item If $M_{\alpha,\beta}$ is isotropic (i.e., $\beta=0$), define the
generalized Arf invariant to be 0. This is called a \emph{vanishing} generalized
Arf invariant.
\item If $v(\alpha)+v(\beta)<2v(2)$ and is odd, then we define the generalized
Arf invariant of $M_{\alpha,\beta}$ to be the class
$\alpha\beta+I_{v(\alpha\beta)}$. These generalized Arf invariants are called
\emph{odd}.
\item In the case where $0<v(\alpha)+v(\beta)<2v(2)$ and is even, the
generalized Arf invariant of $M_{\alpha,\beta}$ is defined to be the (non-zero)
class $\alpha\beta+J_{v(\alpha\beta)}$ (modulo the group of approximate squares
of valuation $v(\alpha\beta)$). This type of generalized Arf invariants is
called \emph{even}.
\item In case $v(\alpha)+v(\beta)=2v(2)$ we take the generalized Arf invariant
of $M_{\alpha,\beta}$ to be the image of $\alpha\beta$ in $4R/4R_{AS}$. These
are called \emph{exact} generalized Arf invariants.
\end{enumerate}
\label{GAIdef}
\end{defn}
We remark that the odd and even cases in Definition \ref{GAIdef} may be unified,
since we saw that for odd $u$ the natural definition is $J_{u}=I_{u}$. For the
exact case, the group $4R/4R_{AS}$ in question is the image of
$\mathbb{F}/\mathbb{F}_{AS}$ (depending only on $\mathbb{F}$) arising from
multiplication by 4 on $R$ and projecting onto the appropriate quotient.

The importance of the generalized Arf invariant from Definition \ref{GAIdef} is
revealed in the following
\begin{prop}
The generalized Arf invariant is an invariant of the isomorphism class of
$M_{\alpha,\beta}$. \label{Arf}
\end{prop}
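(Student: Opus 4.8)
The plan is to show that the generalized Arf invariant does not depend on the particular presentation $M_{\alpha,\beta}$ (with $\beta$ maximal) chosen for a given isomorphism class. By the very definition, if $M_{\alpha,\beta}\cong M_{\gamma,\delta}$ with both $\beta$ and $\delta$ maximal, then Lemma \ref{iso} and Propositions \ref{v<v4} and \ref{v=v4} guarantee that the two presentations fall into the same one of the three (anisotropic) cases, and moreover that $v(\gamma)=v(\alpha)$ and $v(\delta)=v(\beta)$; the isotropic case $\beta=\delta=0$ is trivial, so I would dispose of it first. Thus it suffices to analyse, in each of the three remaining cases, how $\alpha$ and $\beta$ can change under an arbitrary basis change preserving the bilinear form, and to verify that the prescribed class of $\alpha\beta$ is left unchanged.

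The concrete mechanism is to understand the admissible basis changes. Writing a new basis $x'=ax+by$, $y'=cx+dy$ with $ad-bc\in R^{*}$ and imposing $(x',y')=1$ (together with the normalizations that make $\alpha'=(x')^{2}$ minimal and $\beta'=(y')^{2}$ maximal), I would use Lemma \ref{norm2} to describe exactly which values $\alpha'$ and $\beta'$ can take: up to the factor $(R^{*})^{2}$ coming from rescaling the basis vectors, $\alpha'$ ranges over the coset $(R^{*})^{2}\cdot(\alpha+2R)$ (resp.\ $(R^{*})^{2}\cdot(\alpha+\tfrac{4}{\beta}R_{AS})$ in the $v(\alpha)=v(\beta)=v(2)$ subcase), and a parallel description holds for $\beta'$ because $\beta$ has maximal valuation. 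The key point is then a bookkeeping computation: the product $\alpha'\beta'$ differs from $\alpha\beta$ by a square in $R^{*}$ times a correction that is absorbed precisely by the quotient in which the invariant lives. Concretely, in the odd case $v(\alpha\beta)=v(\alpha'\beta')$ is odd, so the ambiguity $c^{2}$ with $c\in R^{*}$ changes $\alpha\beta$ only within $I_{v(\alpha\beta)}$ after one reduces modulo that ideal—and one checks the additive $2R$-type corrections land in $I_{v(\alpha\beta)}$ as well because $v(\alpha)+v(\beta)<2v(2)$ forces the cross terms to have strictly larger valuation (this is exactly the content of Lemma \ref{vnorm}). In the even case the same estimate shows the correction lies in the subgroup of approximate squares $\sigma^{2}R^{2}+I_{v(\alpha\beta)}$ modulo which we have quotiented, and in the case $v(\alpha)+v(\beta)=2v(2)$ the relevant identity $\beta=\tfrac{4}{\alpha}\varepsilon$ from Proposition \ref{v=v4} reduces the claim to showing that the class of $\varepsilon$ in $\mathbb{F}/\mathbb{F}_{AS}$ (equivalently $\alpha\beta\in 4R/4R_{AS}$) is unchanged, which follows from Lemma \ref{RASFAS} together with the Artin--Schreier substitution already used in the proof of Proposition \ref{v=v4}.

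I expect the main obstacle to be the even subcase of $v(\alpha)+v(\beta)<2v(2)$, where one must confirm that replacing $\alpha$ by $c^{2}(\alpha+2r)$ and correspondingly adjusting $\beta$ (so that maximality is preserved) multiplies $\alpha\beta$ by a residual square and nothing more; the delicate point is that both the $(R^{*})^{2}$ ambiguity and the additive $2R$ correction must be shown to act trivially on the quotient group of approximate squares, and this requires tracking valuations carefully via Lemma \ref{vnorm} rather than by a single symbolic identity. Once the admissible transformations of the pair $(\alpha,\beta)$ are pinned down in each case, the verification that $\alpha\beta$ survives in the stated quotient is, in each instance, a short valuation estimate combined with the characterizations of approximate squares and of $R_{AS}$ established above.
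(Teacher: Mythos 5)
Your overall strategy---classify all admissible changes of the pair $(\alpha,\beta)$ and check that $\alpha\beta$ survives in the quotient---has a genuine gap at its first step. You propose to read off the possible values of $\alpha'$ from Lemma \ref{norm2} and to obtain ``a parallel description'' of the possible $\beta'$ from maximality. But Lemma \ref{norm2} is only stated (and only true) under the hypothesis $v(\beta)\geq v(2)$; in the odd and even cases of the invariant one typically has $v(\beta)<v(2)$, where the set of minimal norms is $(R^{*})^{2}\cdot\{\alpha+2s+s^{2}\beta\mid s\in R\}$ and, as is remarked at the end of Section \ref{Inv}, this set is \emph{not} a coset of an additive subgroup, so no description of the form $(R^{*})^{2}(\alpha+2R)$ is available there. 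Worse, no description of the set of \emph{maximal} norms $\beta'$ is established anywhere in the paper (the hard direction of Lemma \ref{r2unimodcls}, which in any case comes later and is again restricted to $v(\beta)\geq v(2)$, is exactly about this), so the ``parallel description'' you need for $\beta'$ does not exist in the generality required. Tracking $\alpha'$ and $\beta'$ separately therefore cannot be completed along the lines you sketch.

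The paper sidesteps all of this by never asking how $\alpha$ and $\beta$ transform individually: the discriminant $-(1-\alpha\beta)$ modulo $(R^{*})^{2}$ is trivially an isomorphism invariant, so an isomorphism $M_{\alpha,\beta}\cong M_{\gamma,\delta}$ yields the single relation $1-\gamma\delta=c^{2}(1-\alpha\beta)$ with $c\in R^{*}$; since $\beta$ and $\delta$ lie in $I_{0}$ one gets $c^{2}\in1+I_{0}$, and writing $c=1-h$ with $h \in I_{0}$ gives $\gamma\delta\equiv h^{2}-2h+\alpha\beta\ (\mathrm{mod\ }I_{v})$ with $v=v(\alpha\beta)$. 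The whole proof then reduces to the valuation bookkeeping on $h$ that you correctly anticipate in your second paragraph: the congruence forces $2v(h)\geq v$, so for odd $v$ both $h^{2}$ and $2h$ lie in $I_{v}$; for even $v<2v(2)$ the correction $h^{2}$ is an approximate square of valuation $v$; and for $v=2v(2)$ one has $h=2t$ and the correction is $4\rho(t)\in4R_{AS}$, with non-vanishing of the resulting class guaranteed by Proposition \ref{v<v4} and Lemma \ref{RASFAS}. If you replace your parametrization step by this discriminant identity, the rest of your argument goes through essentially as you describe.
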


\begin{proof}
The valuations of $\alpha$ and $\beta$ are well-defined by the minimality and
maximality assumptions. Denote $v(\alpha)+v(\beta)$ by $u$. Recall that the
discriminant of $M_{\alpha,\beta}$ is $-1+\alpha\beta$, and we consider its
class modulo $(R^{*})^{2}$. If $M_{\gamma,\delta} \cong M_{\alpha,\beta}$ then
$1-\alpha\beta=c^{2}(1-\gamma\delta)$, and since $\beta$ and $\delta$ lie in
$I_{0}$, it follows that $c^{2}\in1+I_{0}$ hence $c\in1+I_{0}$. If $u>2v(2)$
then Lemma \ref{iso} implies $u=\infty$, $\beta=\delta=0$, both discriminants
are $-1$, and both generalized Arf invariants are 0. Hence the assertion is
immediate in this case. Assume now that $u\leq2v(2)$, and write $c=1-h$ with $h
\in I_{0}$. By taking the images on both sides modulo $I_{u}$ and noting that
$c^{2}\gamma\delta\equiv\gamma\delta(\mathrm{mod\ }I_{u})$ we obtain
\[\gamma\delta \equiv h^{2}-2h+\alpha\beta(\mathrm{mod\ }I_{u}).\] We shall
consider the cases $v(h) \geq v(2)$ and $v(h)<v(2)$ separately. Recall that
$I_{u} \subseteq J_{u}$ for even $u$ and $I_{2v(2)}\subseteq4R_{AS}$, so that
the condition $\gamma\delta\equiv\alpha\beta(\mathrm{mod\ }I_{u})$ yields the
asserted conclusion for any (non-vanishing) generalized Arf invariant.

Now, as $u\leq2v(2)$, if $v(h)>v(2)$ then $h^{2}-2h \in I_{u}$ and
$\gamma\delta\equiv\alpha\beta(\mathrm{mod\ }I_{u})$. The same argument holds
if $v(h)=v(2)$ and $u<2v(2)$. The remaining case in which $v(h) \geq v(2)$ is
where $v(h)=v(2)$ and $u=2v(2)$. We write $h=2t$ with $t \in R$, and the
congruence shows that the difference between $\gamma\delta$ and $\alpha\beta$ is
$4\rho(t)$ modulo $I_{2v(2)}$. Proposition \ref{v=v4} shows that
$\alpha\beta+4\rho(t)$ is not in $I_{2v(2)}$ for any $t \in R$, so that the
corresponding class is non-zero, and Lemma \ref{RASFAS} completes the proof of
this case. Hence the assertion holds wherever $v(h) \geq v(2)$.

We now consider the case where $v(h)<v(2)$, which implies $v(2h)>2v(h)$. It
follows that if $2v(h)<u$ then the congruence cannot hold. This establishes the
inequality $2v(h) \geq u$, which in particular completes the proof for exact
generalized Arf invariants. We also have $2h \in I_{u}$, and we may omit it from
the congruence. Thus, if $2v(h)>u$ then we again have
$\gamma\delta\equiv\alpha\beta(\mathrm{mod\ }I_{u})$, which in particular
completes the proof for the case of odd $u$ since $2v(h) \geq u$ implies
$2v(h)>u$ in this case. It remains to consider the case where $u<2v(2)$ and is
even, and $2v(h)=u$. But our congruence shows that the difference
$\gamma\delta-\alpha\beta$ is $h^{2}$ modulo $I_{u}$, and as $2v(h)=u$ this
difference belongs to $J_{u}$. As Proposition \ref{v<v4} implies that the class
of $\alpha\beta+J_{u}$ is non-zero, this completes the proof of the proposition.
\end{proof}

The case $u=2v(2)$ in Proposition \ref{Arf} generalizes the Arf invariant
defined in \cite{[A]} for such lattices, whence the name. For more on this
relation, see Subsection \ref{Ex}. Note that this case (the exact generalized
Arf invariants) arises from \emph{non-zero} classes in $4R/4R_{AS}$; This is, in
some sense, complemented by the vanishing generalized Arf invariant,
representing the remaining, trivial class in $4R/4R_{AS}$. In any case, our
generalized Arf invariant carries also the additional information about the
valuation $v$. In Section \ref{Inv} we shall present a refinement of the
generalized Arf invariant in some cases, and use it to classify lattices
$M_{\alpha,\beta}$ satisfying some additional conditions.

We remark that we have defined the generalized Arf invariant only for rank 2
lattices, while the classical Arf invariant is defined for quadratic modules of
arbitrary even rank. As the generalized Arf invariant depends on some maximality
conditions, defining it for higher rank lattices requires much more care,
together with results of the same type as those appearing in Section
\ref{Ch2Can} below.

We remark that many results from this section remain valid when the 2-Henselian
assumption is relaxed. E.g., Lemma \ref{RASFAS} holds over $R$ also if we relax
the hypothesis $2\neq0$ in 2-Henselianity: The polynomial $f$ from the proof of
that lemma satisfies $f(x) \in I_{0}$ and $f'(x)=2x-1$ lies in $R^{*}$, so that
the usual Henselian property can be used to prove it. In this case, $\rho$ is a
homomorphism of additive groups also as a map on $R$. For Proposition
\ref{r2rep0} in the case where $2=0$ in $R$, we note that the only isotropic
vectors in $M_{\alpha,0}$ are multiples of $y$ (and there are no reflections
involved). Hence the set $(R/2R)/(R^{*})^{2}$ appearing in the classification
there becomes just the set $R/(R^{*})^{2}$, whose non-zero elements appeared in
the classification of rank 1 lattices. The remaining assertions do not use the
2-Henselian property, and we just remark that if $2=0$ in $R$ then a lattice
$M_{\alpha,\beta}$ with $\beta$ maximal is either isotropic or satisfies the
conditions of Proposition \ref{v<v4}. However, since 2-Henselianity is used in
Lemma \ref{iso}, and the rest of this Section uses the inequality
$v(\alpha)+v(\beta)\leq2v(2)$ for anisotropic lattices, we prefer to stay in the
2-Henselian setting.

\section{Invariants of Lattices with Primitive Norms in $2R$ \label{Inv}}

In this section we define a refinement of the generalized Arf invariant from
Definition \ref{GAIdef} in case the valuation is larger than $v(2)$. We then
present an additional invariant of lattices $M_{\alpha,\beta}$ containing
primitive elements with norms divisible by 2, and show that these two
invariants characterize isomorphism classes of such lattices. In the end of this
Section we reproduce the results for lattices over the 2-adic ring
$\mathbb{Z}_{2}$, and give also the example of lattices over
$\mathbb{Z}_{2}[\sqrt{2}]$.

\subsection{A Criterion for Isomorphism and Fine Arf Invariants
\label{FinArfInv}}

We first present the main criterion for isomorphism between lattices of the form
$M_{\alpha,\beta}$ with $v(\beta) \geq v(2)$.

\begin{lem}
Assume $v(\alpha) \leq v(2) \leq v(\beta) \leq v\big(\frac{4}{\alpha}\big)$. A
lattice $M_{\gamma,\delta}$, where $\delta$ is a norm of primitive element with
maximal valuation, is isomorphic to $M_{\alpha,\beta}$ if and only if
$\gamma=c^{2}(\alpha+2r)$ for some $r \in R$ (with the restriction
$r\in\frac{2}{\beta}R_{AS}$ if $v(\beta)=v(2)$) and $c \in R^{*}$, and
$\delta=\frac{\beta+2a-\frac{\alpha+2r}{1-\alpha\beta}a^{2}}{c^{2}(1+2\beta r)}$
for some $a \in R$ with $2v(a) \geq v\big(\frac{\beta}{\alpha}\big)$. In
particular, if $\gamma$ is as above and $\delta-\frac{\beta}{c^{2}} \in
I_{2v(2)-v(\alpha)}$ then $M_{\gamma,\delta} \cong M_{\alpha,\beta}$. In the
limit case $v\big(\delta-\frac{\beta}{c^{2}}\big)=v\big(\frac{4}{\alpha}\big)$,
we obtain $M_{\gamma,\delta} \cong M_{\alpha,\beta}$ if and only if the element
$\frac{\gamma}{4c^{2}}(c^{2}\delta-\beta)$ of $R^{*}$ lies in $R_{AS}$. If
$v(\alpha)<v(2)$ this is equivalent to $\frac{\alpha}{4}(\beta-c^{2}\delta) \in
R_{AS}$. \label{r2unimodcls}
\end{lem}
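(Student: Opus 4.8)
The plan is to translate $M_{\gamma,\delta}\cong M_{\alpha,\beta}$ into the existence, inside $M_{\alpha,\beta}$ with its standard basis $x,y$, of a basis $\{u,w\}$ with $u^{2}=\gamma$, $(u,w)=1$ and $w^{2}=\delta$; such a basis realises the Gram matrix of $M_{\gamma,\delta}$, so it exists exactly when the two lattices are isomorphic. Since $v(\gamma)=v(\alpha)$ is the minimal norm valuation, $u$ is a primitive vector of minimal-valuation norm, and Lemma \ref{norm2}---together with the reflection remark following it, which covers the case $v(\alpha)=v(2)=v(\beta)$ in which the coefficient of $x$ need not a priori be a unit---forces $u=c(x+sy)$ with $c\in R^{*}$, $s\in R$, and then $\gamma=c^{2}(\alpha+2r)$ with $r=s+s^{2}\tfrac{\beta}{2}$. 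The restriction $r\in\tfrac{2}{\beta}R_{AS}$ when $v(\beta)=v(2)$ is precisely the one in Lemma \ref{norm2}, so this reads off the asserted form of $\gamma$ in both directions.

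Next I would determine $w$. Because $v(\beta)\geq v(2)>0$ forces $1+s\beta\in R^{*}$, the vector $w_{1}=\tfrac{y}{c(1+s\beta)}$ satisfies $(u,w_{1})=1$ and completes $u$ to a basis. Every vector with $(u,w)=1$ differs from $w_{1}$ by an element of $u^{\perp}$, a free rank-$1$ module generated by $p=(1+s\beta)x-(\alpha+s)y$. I would record the three identities $(1+s\beta)^{2}=1+2\beta r$, $p^{2}=(\alpha\beta-1)\gamma/c^{2}$ and $(w_{1},p)=(1-\alpha\beta)/\big(c(1+s\beta)\big)$, write $w=w_{1}+\lambda p$ with $\lambda=a/\big((1-\alpha\beta)c(1+s\beta)\big)$ so that $a\in R$ is the genuine free parameter, and expand $w^{2}=w_{1}^{2}+2\lambda(w_{1},p)+\lambda^{2}p^{2}$; this produces exactly the displayed formula for $\delta$, the basis condition $1-\tfrac{a(\alpha+2r)}{1-\alpha\beta}\in R^{*}$ being automatic once the valuation bound below holds.

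To complete the equivalence I would match the maximality of $\delta$ with the bound $2v(a)\geq v(\beta/\alpha)$. The numerator $\beta+2a-\tfrac{(\alpha+2r)a^{2}}{1-\alpha\beta}$ has three terms of valuations $v(\beta)$, $v(2)+v(a)$ and $v(\alpha)+2v(a)$, and $v(\delta)$ attains its maximal value $v(\beta)$ exactly when the latter two are at least $v(\beta)$. Here the hypothesis $v(\beta)\leq v(4/\alpha)=2v(2)-v(\alpha)$ is what makes the single inequality $2v(a)\geq v(\beta/\alpha)$ equivalent to both, closing the iff.

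For the explicit criteria I would rewrite the formula for $\delta$ as the quadratic $Aa^{2}+Ba+C=0$ in $a$ with $A=\tfrac{\alpha+2r}{1-\alpha\beta}$, $B=-2$ and $C=c^{2}(1+2\beta r)\delta-\beta$, noting $v(A)=v(\alpha)$ and that $C$ has the same valuation as $\delta-\tfrac{\beta}{c^{2}}$ up to units. If $\delta-\tfrac{\beta}{c^{2}}\in I_{2v(2)-v(\alpha)}$ then $v(AC)>2v(2)=2v(B)$, so Lemma \ref{quadeq} yields a root $a\in R$, whose valuation one checks still respects $2v(a)\geq v(\beta/\alpha)$, giving the isomorphism. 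The delicate \emph{limit} case $v\big(\delta-\tfrac{\beta}{c^{2}}\big)=v(4/\alpha)$ has $v(AC)=2v(2)$ exactly, where Lemma \ref{quadeq} no longer applies; here I would substitute $b=Aa$ and then $b=2\xi$ to bring the equation to Artin--Schreier form $\rho(\xi)=-AC/4$, solvable if and only if $-AC/4\in R_{AS}$. As membership in $R_{AS}$ depends only on the residue (Lemma \ref{RASFAS}) and $\overline{-1}=\overline{1}$ in the characteristic-$2$ field $\mathbb{F}$, this is equivalent to $\tfrac{\gamma}{4c^{2}}(c^{2}\delta-\beta)\in R_{AS}$, collapsing to $\tfrac{\alpha}{4}(\beta-c^{2}\delta)\in R_{AS}$ when $v(\alpha)<v(2)$ since then $\alpha+2r\equiv\alpha$ to the relevant precision. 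I expect this boundary analysis to be the main obstacle: tracking which unit factors ($1-\alpha\beta$, $1+2\beta r$, $c^{2}$) may be absorbed into the residue class, while simultaneously confirming the produced root obeys the maximality bound, is where the argument is genuinely delicate rather than routine.
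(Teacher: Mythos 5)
Your proposal is correct and follows essentially the same route as the paper: reduce to finding a basis $\{u,w\}$ of $M_{\alpha,\beta}$ with the Gram matrix of $M_{\gamma,\delta}$, pin down $u=c(x+sy)$ via Lemma \ref{norm2} and the reflection remark, parametrize $w$ by $a \in R$ to get the displayed formula for $\delta$ together with the maximality bound $2v(a)\geq v(\beta/\alpha)$, and then solve the resulting quadratic in $a$ by Lemma \ref{quadeq} in the interior case and by reduction to an Artin--Schreier equation (via Lemma \ref{RASFAS}) in the limit case. Your normalizations ($b=Aa$, $b=2\xi$) differ only cosmetically from the paper's substitutions ($\frac{\alpha+2r}{1-\alpha\beta}=\eta\alpha$, $a=\frac{\lambda-\beta}{2}b$, $b=\frac{4}{\eta\alpha(\lambda-\beta)}h$), and your unit-tracking in the residue class matches the paper's.
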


\begin{proof}
Since we assume $v(\alpha) \leq v(\beta)$ and $v(\gamma) \leq v(\delta)$, we
find that $\gamma$ is the norm of an element $z \in M_{\alpha,\beta}$ of the
form $z=c(x+sy)$ with $c \in R^{*}$ and $s \in R$. Indeed, if
$v(\alpha)<v(\beta)$ this is clear, and if $v(\alpha)=v(2)=v(\beta)$ then the
assertion follows by using the reflection with respect to $x$ (the one
mentioned after Lemma \ref{norm2}) if necessary. Lemma \ref{norm2} now implies
that $\gamma$ has the form $c^{2}(\alpha+2r)$ with $c \in R^{*}$ and
$r=s+\frac{\beta}{2}s^{2} \in R$, and $r\in\frac{2}{\beta}R_{AS}$ if
$v(\beta)=v(2)$. The second basis element $w$ of $M_{\gamma,\delta}$ satisfies
$(z,w)=1$, which implies
\[c(1+s\beta)w=y+\frac{a}{1-\alpha\beta}\big((1+s\beta)x-(\alpha+s)y\big)\] for
some $a \in R$. This is because $(y,z)=c(1+s\beta)$ and
$(1+s\beta)x-(\alpha+s)y$ spans the space $(Rz)^{\perp}$. Therefore
$\delta=\frac{[c(1+s\beta)w]^{2}}{c^{2}(1+s\beta)^{2}}=\frac{\beta+2a-\frac{
\alpha+2r}{1-\alpha\beta}a^{2}}{c^{2}(1+2\beta r)}$ as asserted, and the
inequality $2v(a) \geq v\big(\frac{\beta}{\alpha}\big)$ follows from the
maximality of $\delta$ (and implies $v(\delta)=v(\beta)$---see Propositions
\ref{v<v4} and \ref{v=v4}). Conversely, the map taking the two basis elements
of $M_{\gamma,\delta}$ with such $\gamma$ and $\delta$ to $z=c(x+sy)$ and the
asserted $w$ defines an isomorphism to $M_{\alpha,\beta}$ (the surjectivity of
this map follows either by evaluating the determinant of this change of basis or
just from unimodularity). This establishes the first assertion. Now, given
$\gamma$ (hence $c^{2}$), it remains to show that if $\delta-\frac{\beta}{c^{2}}
\in I_{2v(2)-v(\alpha)}$, or if
$v\big(\delta-\frac{\beta}{c^{2}}\big)=v\big(\frac{4}{\alpha}\big)$ and
$\frac{\gamma}{4c^{2}}(c^{2}\delta-\beta)$ lies in $R_{AS}$, then we can find an
appropriate value of $a$. Write $\frac{\alpha+2r}{1-\alpha\beta}$ as
$\eta\alpha$ for some $\eta$ (which belongs to $R^{*}$ since
$\eta=\frac{\gamma}{c^{2}\alpha(1-\alpha\beta)}$ and $v(\gamma)=v(\alpha)$), and
we need to find $a \in R$ such that $c^{2}(1+2\beta r)\delta=\beta+2a-\eta\alpha
a^{2}$. Denoting the left hand side by $\lambda$, we find
$v\big(\eta-\frac{\gamma}{c^{2}\alpha}\big)=v(\alpha\beta)>0$ and
$v(\lambda-c^{2}\delta) \geq v(2\beta\delta)>v\big(\frac{4}{\alpha}\big)$.
Next, we write $a=\frac{\lambda-\beta}{2}b$, and look for a solution for
\[0=-\eta\alpha
a^{2}+2a+(\beta-\lambda)=(\beta-\lambda)\big[\eta\alpha\frac{(\lambda-\beta)}{4}
b^{2}-b+1\big].\] If $\delta-\frac{\beta}{c^{2}} \in I_{2v(2)-v(\alpha)}$ then
$A=\eta\alpha\frac{(\lambda-\beta)}{4} \in I_{0}$, and Lemma \ref{quadeq} with
$B=-1$ and $C=1$ yields a solution $b \in R$. If
$v\big(\delta-\frac{\beta}{c^{2}}\big)=v\big(\frac{4}{\alpha}\big)$ then
$\frac{4}{\alpha}(\lambda-\beta) \in R^{*}$, so that we write
$b=\frac{4}{\eta\alpha(\lambda-\beta)}h$ and the equation becomes
$0=\frac{4}{\eta\alpha(\lambda-\beta)}\big[h^{2}-h+\eta\alpha\frac{
(\lambda-\beta)}{4}\big]$. The approximations for $\lambda$ and $\eta$ above
show that $\eta\alpha\frac{(\lambda-\beta)}{4}$ has the same $\mathbb{F}$-image
as the element $\frac{\gamma}{4c^{2}}(c^{2}\delta-\beta)$ of $R_{AS}$, so that
Lemma \ref{RASFAS} implies the existence of a solution to the latter equation.
Finally, if $v(\alpha)<v(2)$ then
$\eta$, hence also $\frac{\gamma}{c^{2}\alpha}$, are in $1+I_{0}$, so that
$\frac{\gamma}{4c^{2}}(\beta-c^{2}\delta) \in R_{AS}$ if and only if
$\frac{\alpha}{4}(c^{2}\delta-\beta) \in R_{AS}$. This proves the lemma.
\end{proof}

Lemma \ref{r2unimodcls} is the main tool for investigating whether two lattices
of the form $M_{\alpha,\beta}$ and $M_{\gamma,\delta}$, with $\beta$ and
$\delta$ maximal with valuations at least $v(2)$, are isomorphic. We begin by
showing that isomorphism classes of lattices with generalized Arf invariant in
$I_{v(2)}$ can be described using yet another invariant, which is finer than the
generalized Arf invariant.

\begin{prop}
$(i)$ The set $S=\{t^{2}-2t|t \in R,2v(t)>v(2)\}$ is a subgroup of $I_{v(2)}$
which contains $I_{2v(2)}$. $(ii)$ If $M_{\alpha,\beta} \cong M_{\gamma,\delta}$
with $\beta$ and $\delta$ maximal and the (common) generalized Arf invariant
is contained in $I_{v(2)}$, then $\alpha\beta$ and $\gamma\delta$ coincide
modulo $S$. $(iii)$ There exists a well-defined map from elements of the
quotient $I_{v(2)}/S$ containing a representative of maximal valuation onto the
set of generalized Arf invariants with valuation larger than $v(2)$.
\label{finArf}
\end{prop}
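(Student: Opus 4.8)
The plan is to handle the three parts in turn, using the structural description of $S$ obtained in part $(i)$ to drive parts $(ii)$ and $(iii)$.

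For part $(i)$ the key step is the reformulation $S=\{s\in I_{v(2)}\mid 1+s\in(R^{*})^{2}\}$. Writing $s=2t+t^{2}=(1+t)^{2}-1$ shows every element of $S$ has this form, and the inclusion $S\subseteq I_{v(2)}$ follows from the valuation computation $v(2t+t^{2})>v(2)$ whenever $2v(t)>v(2)$; conversely, if $s\in I_{v(2)}$ and $1+s=y^{2}$ with $y\in R^{*}$, then $y^{2}\equiv1$ modulo $I_{0}$ forces $y\equiv1$ (the residue field has characteristic $2$), so $y=1+t$ and $s=2t+t^{2}$, the inequality $v(s)>v(2)$ being equivalent to $2v(t)>v(2)$. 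Granting this, $1+S=(1+I_{v(2)})\cap(R^{*})^{2}$ is an intersection of two multiplicative subgroups of $R^{*}$, hence a group $G$. To convert the multiplicative statement into the additive one I would invoke Lemma \ref{quadeq}: its first assertion gives $1+I_{2v(2)}\subseteq(R^{*})^{2}$, hence $1+I_{2v(2)}\subseteq G$, i.e. $I_{2v(2)}\subseteq S$. Since $s_{1}s_{2}\in I_{2v(2)}$ for $s_{1},s_{2}\in I_{v(2)}$, the map $\lambda\colon(I_{v(2)},+)\to(1+I_{v(2)})/(1+I_{2v(2)})$ sending $s$ to $(1+s)(1+I_{2v(2)})$ is a surjective homomorphism with kernel $I_{2v(2)}$. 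Under the induced isomorphism $I_{v(2)}/I_{2v(2)}\cong(1+I_{v(2)})/(1+I_{2v(2)})$ the subset $S/I_{2v(2)}$ corresponds to the subgroup $G/(1+I_{2v(2)})$, so $S$ is a subgroup of $I_{v(2)}$.

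For part $(ii)$ I would rerun the discriminant computation from the proof of Proposition \ref{Arf}. The isomorphism yields $c\in R^{*}$ with $1-\alpha\beta=c^{2}(1-\gamma\delta)$ and $c^{2}\equiv1$ modulo $I_{0}$; as in part $(i)$ this gives $c=1+t_{0}$ with $t_{0}\in I_{0}$, and rearranging produces
$$\gamma\delta-\alpha\beta=(c^{2}-1)(1-\gamma\delta)=(2t_{0}+t_{0}^{2})(1-\gamma\delta).$$
The hypothesis that the common Arf invariant lies in $I_{v(2)}$ means $v(\alpha\beta)=v(\gamma\delta)>v(2)$, so $v(\gamma\delta-\alpha\beta)>v(2)$; since $v(c^{2}-1)=v(t_{0})+v(2+t_{0})$, this forces $2v(t_{0})>v(2)$, whence $2t_{0}+t_{0}^{2}\in S$ by part $(i)$. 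Finally $(2t_{0}+t_{0}^{2})(1-\gamma\delta)=(2t_{0}+t_{0}^{2})-(2t_{0}+t_{0}^{2})\gamma\delta$, and the second summand has valuation exceeding $2v(2)$, hence lies in $I_{2v(2)}\subseteq S$; as $S$ is a group, $\gamma\delta-\alpha\beta\in S$.

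For part $(iii)$ the map sends a coset $u+S$, read as $u=\alpha\beta$, to the generalized Arf invariant determined by $v(u)$: the class $u+I_{v(u)}$ when $v(u)$ is odd, the class of $u$ modulo approximate squares when $v(u)$ is even and below $2v(2)$, the image of $u$ in $4R/4R_{AS}$ when $v(u)=2v(2)$, and the vanishing invariant when $v(u)>2v(2)$ (equivalently $u\in I_{2v(2)}\subseteq S$). The heart of the argument is a valuation-stratum analysis of $S$: for $s=2t+t^{2}$ with $v(t)<v(2)$ one has $v(s)=2v(t)$ and $s\equiv t^{2}$ modulo $I_{v(s)}$, so at an even valuation in $(v(2),2v(2))$ the elements of $S$ are approximate squares and realize every approximate-square class; for $v(t)=v(2)$, writing $t=2w$ gives $s=4(w^{2}+w)=4\rho(-w)\in 4R_{AS}$, identifying the valuation-$2v(2)$ part of $S$ with $4R_{AS}$; and $S\supseteq I_{2v(2)}$ absorbs all higher valuations. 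These are exactly the subgroups modded out in the definition of the invariant, which is what makes the map well defined, while surjectivity follows by exhibiting for each prescribed valuation and class an explicit $u$, using Propositions \ref{v<v4} and \ref{v=v4} to ensure the corresponding $M_{\alpha,\beta}$ has $\beta$ maximal.

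The step I expect to be the main obstacle is the well-definedness in part $(iii)$: adding an $s\in S$ of valuation strictly below $v(u)$ lowers the valuation of the representative, so one must argue that the representative attached to a genuine lattice—where $\beta$ is maximal—is precisely the representative of \emph{maximal} valuation in its coset, and that this valuation together with the residual class is left unchanged by $S$. Pinning down the exact valuation strata of $S$, and verifying that it meets the even layers in the approximate squares and the layer $2v(2)$ in $4R_{AS}$ (and nothing more), is the calculation on which the whole statement turns.
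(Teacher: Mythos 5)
Your proposal is correct, and in part $(i)$ it takes a genuinely different route from the paper. The paper proves closure of $S$ under differences by brute force: it posits that $(t^{2}+2t)-(s^{2}+2s)$ can be written as $(t-s+h)^{2}+2(t-s+h)$, derives the quadratic $h^{2}+2(t-s+1)h+2s^{2}-2ts=0$, and applies Lemma \ref{quadeq} to produce $h$ with $2v(t-s+h)>v(2)$. Your reformulation $1+S=(1+I_{v(2)})\cap(R^{*})^{2}$ (correct: residue characteristic $2$ forces a square root of an element of $1+I_{v(2)}$ into $1+I_{0}$, and the case analysis on $v(t)$ versus $v(2)$ shows $v(2t+t^{2})>v(2)$ is equivalent to $2v(t)>v(2)$) replaces that computation by group theory: $1+S$ is an intersection of multiplicative subgroups of $R^{*}$, and the isomorphism $I_{v(2)}/I_{2v(2)}\cong(1+I_{v(2)})/(1+I_{2v(2)})$---available because $s_{1}s_{2}\in I_{2v(2)}$ and because $1+I_{2v(2)}\subseteq(R^{*})^{2}$ by Lemma \ref{quadeq}---transports the subgroup property back to the additive side. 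This is arguably cleaner, and it makes the inclusion $I_{2v(2)}\subseteq S$ explicit, whereas the paper uses it only implicitly in part $(ii)$. Parts $(ii)$ and $(iii)$ of your argument essentially coincide with the paper's: the same discriminant identity $\gamma\delta-\alpha\beta=(c^{2}-1)(1-\gamma\delta)$ with $c=1+t_{0}$ and $2v(t_{0})>v(2)$, and the same valuation stratification of $S$ (approximate squares at even valuations below $2v(2)$, the identity $S\cap4R=4R_{AS}$, and the absorption of everything above $2v(2)$).

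The well-definedness issue you flag at the end of part $(iii)$ is real, but the paper's own proof does not address it either: it only verifies that each quotient defining a generalized Arf invariant of a fixed valuation is coarser than the corresponding stratum of $S$, and then asserts that this ``yields the desired map.'' Your proposed repair---that the invariant should be read off from the representatives of maximal valuation in a coset, which are the ones of the form $\alpha\beta$ with $\beta$ maximal---is the right one, and your stratum computation is exactly what shows that a coset cannot contain valid representatives of two different types. So on this point your write-up is no less complete than the paper's.
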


\begin{proof}
$(i)$ The inclusion $S \subseteq I_{v(2)}$ is clear. For $y \in I_{2v(2)}$ we
have $\frac{y}{4} \in I_{0} \subseteq R_{AS}$, and if $\frac{y}{4}=\rho(x)$
then $y$ is obtained as $t^{2}-2t$ for $t=2x$. In order to show that $S$ is a
subgroup, we show that the difference between two elements $t^{2}-2t$ and
$s^{2}-2s$ of $S$ also lies in $S$, since it is of the form
$(t-s+h)^{2}-2(t-s+h)$ for some $h$ such that $2v(h)>v(2)$. Indeed, comparing
terms yields the equation $h^{2}+2(t-s-1)h+2s^{2}-2ts=0$ for $h$, where the
coefficients $A=1$, $B=2(t-s-1)$, and $C=2s^{2}-2ts$ satisfy $v(A)=0$,
$v(B)=v(2)$, and $v(C)>2v(2)$. To see the inequality concerning $v(C)$, observe
that $v(s^{2})>v(2)$ and $v(ts)>v(2)$ because $2v(t)+2v(s)>2v(2)$ by our
assumptions on $s$ and $t$. Lemma \ref{quadeq} thus yields a solution $h$ of
valuation $v\big(\frac{C}{B}\big)$, which is the same valuation as
$v(ts-s^{2})>v(2)$. Hence $2v(t-s+h)>v(2)$ and the difference is indeed an
element of $S$.

$(ii)$ Let now $M_{\alpha,\beta}$ and $M_{\gamma,\delta}$ be isomorphic lattices
such that the common valuation $u$ of $\alpha\beta$ and $\gamma\delta$ satisfies
$u>v(2)$. Then $1-\gamma\delta=c^{2}(1-\alpha\beta)$ for some $c \in R^{*}$, and
an argument similar to the proof of Proposition \ref{Arf} shows that $c$ must be
of the form $1-h$ with $2v(h) \geq u$. But then $\alpha\beta(h^{2}-2h) \in
I_{2v(2)} \subseteq S$ and $h^{2}-2h \in S$, so that $\gamma\delta-\alpha\beta
\in S$ as asserted.

$(iii)$ Let $\omega$ be a representative of a class of $I_{v(2)}/S$ of maximal
valuation, i.e., $v(\omega+t^{2}-2t) \leq v(\omega)$ for every $t$ with
$2v(t)>v(2)$. If $v(\omega)>2v(2)$ then $\omega \in S$ hence $\omega=0$.
Otherwise, the set of such representatives of the class $\omega+S$ is just
$\omega+(S\cap\omega R)$, and it contains no element of $I_{v(\omega)}$.
Considerations similar to those presented in the proof of Proposition \ref{Arf}
yield the following conclusions: If $v(\omega)<2v(2)$ and
is odd then $S\cap\omega R \subseteq I_{v(\omega)}$. If $v(\omega)<2v(2)$ but is
even, then the image of $S\cap\omega R$ modulo $I_{v(\omega)}$ coincides with
that of $J_{v(\omega)}$. Finally, if $v(t^{2}-2t)\geq2v(2)$ then $t \in 2R$, and
if $t=2r$ then $t^{2}-2t=4\rho(r)$. The latter observation implies the equality
$S\cap4R=4R_{AS}$ (and in particular we see that $I_{2v(2)} \subseteq S$ again).
The maximality of $v(\omega)$ in its class implies that $\omega$ must satisfy
the conditions for $\alpha\beta$ in Propositions \ref{v<v4} and \ref{v=v4}
(i.e., either $v(\omega)<2v(2)$ is odd, or $v(\omega)<2v(2)$ is even and
$\omega$ is not an approximate square, or $v(\omega)=2v(2)$ and $\omega$ not
lying in $4R_{AS}$). In particular every such $\omega$ defines a generalized Arf
invariant. Moreover, our arguments show that this generalized Arf invariant is
the image of $\omega$ in the quotient of $\omega$ modulo the group $(S\cap\omega
R)+I_{v(\omega)}$, which is coarser than the quotient modulo $(S\cap\omega R)$
and coincides with it if $v(\omega)=2v(2)$. Hence the map taking a class in
$I_{v(2)}/S$ containing an element $\omega$ of maximal valuation to the
generalized Arf invariant represented by $\omega$ is well-defined, as it just
take the image of $\omega$ in one quotient to the image of $\omega$ in a coarser
quotient. This proves the proposition.
\end{proof}

On the basis of these arguments, we make the following
\begin{defn}
A \emph{fine Arf invariant} is defined to be the set of representatives of
maximal valuation in a class in $I_{v(2)}/S$ containing such representatives.
The \emph{valuation of a fine Arf invariant} is the valuation of any such
representative. A fine Arf invariant is said to be
\begin{enumerate}
\item \emph{vanishing} if it comes from the trivial class with the
representative 0;
\item \emph{odd} if its valuation is smaller than $2v(2)$ and odd;
\item \emph{even} in case its valuation is smaller than $2v(2)$ and even; and
\item \emph{exact} in case its valuation equals precisely $2v(2)$.
\end{enumerate}
Given a lattice $M_{\alpha,\beta}$ with $\beta$ of maximal valuation as the norm
of a primitive element in this lattice and with generalized Arf invariant of
valuation larger than $v(2)$, we define the \emph{fine Arf invariant of
$M_{\alpha,\beta}$} to be the set of elements of valuation $v(\alpha\beta)$ in
the class $\alpha\beta$ modulo $S$. \label{FAIdef}
\end{defn}
Part $(ii)$ of Proposition \ref{finArf} shows that the fine Arf invariant of a
lattice $M_{\alpha,\beta}$ whose generalized Arf invariant has valuation larger
than $v(2)$ is an invariant of the isomorphism class of $M_{\alpha,\beta}$. The
type (vanishing, odd, even, or exact) of a fine Arf invariant from Definition
\ref{FAIdef} coincides with the type of the generalized Arf invariant to which
is it taken by the map from part $(iii)$ of Proposition \ref{finArf}. This map
also preserves the valuations of fine and generalized Arf invariants, and its
restriction to vanishing and exact fine Arf invariants (i.e., to valuations at
least $2v(2)$) gives a natural bijection between these fine and generalized Arf
invariants. It is clear that this map takes the fine Arf invariant of a lattice
$M_{\alpha,\beta}$ (for which the fine Arf invariant is defined) to the
generalized Arf invariant of this lattice.

\subsection{Classes of Minimal Norms \label{MinNorm}}

Lemma \ref{r2unimodcls} shows that if $v(\beta) \geq v(2)$ and $M_{\alpha,\beta}
\cong M_{\gamma,\delta}$ then $\gamma$ differs from some element in
$(R^{*})^{2}\alpha$ by an element in $2R$ (and even in $\frac{4}{\beta}R_{AS}$
if $v(\beta)=v(2)$). Lemma \ref{RASFAS} implies that the set
$\frac{4}{\beta}R_{AS}$ depends only on the image of $\beta$ modulo
$I_{v(\beta)}$, so that we can write it (at least heuristically at this point)
as $\frac{4\alpha}{\eta}R_{AS}$ using the generalized Arf invariant $\eta$ of
$M_{\alpha,\beta}$. Thus, if $v(\beta)=v(2)$ then $\gamma$ and $\alpha$ can be
described as related through the action of the multiplicative group
$(R^{*})^{2}\big(1+\frac{4}{\eta}R_{AS}\big)$. In order to put the relation for
$v(\beta)>v(2)$ on the same basis, we remark that (at least for the non-zero
classes with $v(\alpha)<v(2)$) the relation $\gamma\in(R^{*})^{2}(\alpha+2R)$
can also be phrased using the action of the group
$(R^{*})^{2}\big(1+\frac{2}{\xi}R\big)$ where $\xi \in R$ is any element of $R$
with $v(\xi)=v(\alpha)$. We therefore introduce the following
\begin{defn}
A \emph{coarse class of minimal norms} is an element of the set
$(R/2R)/(R^{*})^{2}$. Given a lattice $M_{\alpha,\beta}$ such that $v(\beta)$ is
maximal and satisfies $v(\beta)>v(2)$, we define the \emph{class of minimal
norms of $M_{\alpha,\beta}$} to be the image of $\alpha$ in the set of coarse
classes of minimal norms. Given a generalized Arf invariant $\eta$ with $v(\eta)
\leq 2v(2)$ (i.e., non-vanishing) we define a \emph{fine class of minimal norms
arising from $\eta$} to be the orbit of an element of $R$, of valuation $u$
satisfying $2u \leq v(\eta)$, under the action of the multiplicative group
$(R^{*})^{2}\big(1+\frac{4}{\tau}R_{AS}\big)$, where $\tau$ is some element of
$R$ whose class in the appropriate quotient is $\eta$. Let $M_{\alpha,\beta}$ be
a lattice with generalized Arf invariant $\eta$, and assume that $v(\beta)$ is
maximal and equals $v(2)$ (so that $v(\eta) \geq v(2)$). In this case we define
the \emph{class of minimal norms of $M_{\alpha,\beta}$} to be the image of
$\alpha$ in the set of fine classes of minimal norms arising from $\eta$.
\label{CMNdef}
\end{defn}
Note that the condition on $v(\beta)$ implies that if the generalized (or fine)
Arf invariant $\eta$ of $M_{\alpha,\beta}$ is not of vanishing type then the
class of minimal norms of $M_{\alpha,\beta}$ has valuation strictly smaller than
$v(\eta)-v(2)$ if $v(\beta)>v(2)$, and its valuation equals precisely
$v(\eta)-v(2)$ in case $v(\beta)=v(2)$.

The fact that the objects appearing in Definition \ref{CMNdef} are well-defined,
and the main classification result of this paper, are as follows:
\begin{thm}
The set of fine classes of minimal norms arising from each generalized Arf
invariant $\eta$ with $v(\eta) \leq 2v(2)$ is well-defined. Given a generalized
Arf invariant $\eta$ with $v(\eta)>v(2)$, the isomorphism classes of lattices
$M_{\alpha,\beta}$ with generalized Arf invariant $\eta$ and with primitive
elements with norm in $2R$ (of maximal valuation) are characterized precisely by
their fine Arf invariant and their classes of minimal norms. \label{v>v2cls}
\end{thm}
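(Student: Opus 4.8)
The plan is to handle the two assertions of the theorem separately: first the well-definedness of the classes of minimal norms, then the characterization of isomorphism classes, the latter splitting into an easy ``necessity'' direction and a delicate ``sufficiency'' direction. The two workhorses throughout are Lemma \ref{r2unimodcls}, which reduces the isomorphism problem for $M_{\alpha,\beta}$ and $M_{\gamma,\delta}$ to explicit conditions on $\gamma$ and $\delta$, and Proposition \ref{finArf}, which records the arithmetic of the subgroup $S$ in each valuation stratum, most importantly the identity $S\cap4R=4R_{AS}$, the inclusion $I_{2v(2)}\subseteq S$, and the description of $S$ modulo $I_V$ by approximate squares in the even case. I will also use Lemma \ref{RASFAS} repeatedly to replace $R_{AS}$ by the preimage of $\mathbb{F}_{AS}$ under reduction.

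For well-definedness I must show that the multiplicative group $(R^*)^2\bigl(1+\tfrac4\tau R_{AS}\bigr)$ is a group and is independent of the representative $\tau$ of $\eta$. The group property follows from a computation in the style of Proposition \ref{finArf}$(i)$: a product of two elements $1+\tfrac4\tau a$ and $1+\tfrac4\tau b$ with $a,b\in R_{AS}$ differs from an element of the same shape by a factor in $(R^*)^2$, the cross term $\tfrac{16}{\tau^2}ab$ being controlled by $v\bigl(\tfrac4\tau\bigr)=2v(2)-v(\tau)\ge0$. For independence, I use that $R_{AS}$ is the full preimage of $\mathbb{F}_{AS}$ (Lemma \ref{RASFAS}), so that $\tfrac4\tau R_{AS}$ depends on $\tau$ only through $v(\tau)$ and a residue class; two representatives $\tau,\tau'$ of the same $\eta$ differ exactly by the ambiguity of the relevant quotient (an element of $I_V$ in the odd case, an approximate square modulo $I_V$ in the even case, an element of $4R_{AS}$ in the exact case), and in each case this discrepancy is absorbed into $(R^*)^2$ together with $\tfrac4\tau R_{AS}$, yielding the same group.

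The necessity direction is immediate from the results in place: if $M_{\alpha,\beta}\cong M_{\gamma,\delta}$ then Lemma \ref{r2unimodcls} gives $\gamma=c^2(\alpha+2r)$ with the stated restriction on $r$, which says precisely that $\alpha$ and $\gamma$ lie in the same orbit of the group defining the class of minimal norms (the coarse class when $v(\beta)>v(2)$, the $\eta$-class when $v(\beta)=v(2)$), while Proposition \ref{finArf}$(ii)$ gives $\alpha\beta\equiv\gamma\delta\pmod S$, i.e.\ equality of fine Arf invariants. The isotropic sub-case $v(\eta)>2v(2)$, where $\beta=\delta=0$ and the fine Arf invariant vanishes, is covered directly by Proposition \ref{r2rep0}, which classifies such lattices exactly by their coarse class of minimal norms. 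For sufficiency I start from equal invariants and build an isomorphism via Lemma \ref{r2unimodcls}. Equality of the classes of minimal norms yields $c\in R^*$ and an admissible $r$ with $\gamma=c^2(\alpha+2r)$, so it remains to realize $\delta$ by the formula of that lemma for a suitable $a$, i.e.\ to solve the quadratic $\tfrac{\alpha+2r}{1-\alpha\beta}a^2-2a+\bigl(c^2(1+2\beta r)\delta-\beta\bigr)=0$. Equality of fine Arf invariants, after substituting $\gamma=c^2(\alpha+2r)$, translates into $\alpha(c^2\delta-\beta)\in S$ up to a correction from $r$ that lands in $I_{2v(2)}\subseteq S$. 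Writing $m=v\bigl(\delta-\tfrac\beta{c^2}\bigr)$, the graded description of $S$ forces either $v(\alpha)+m\ge2v(2)$ or $v(\alpha)+m$ even and $<2v(2)$; in the first alternative Lemma \ref{quadeq} solves the quadratic directly when the inequality is strict (the first clause of Lemma \ref{r2unimodcls}), and the boundary $v(\alpha)+m=2v(2)$ is the limit case, where $\alpha(c^2\delta-\beta)\in S\cap4R=4R_{AS}$ supplies exactly the membership $\tfrac\alpha4(\beta-c^2\delta)\in R_{AS}$ needed to solve it via Lemma \ref{RASFAS}.

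The main obstacle is the remaining alternative, $v(\alpha)+m$ even and strictly below $2v(2)$, which the two ``in particular'' clauses do not reach in one step. Here the approximate-square content of Proposition \ref{finArf}$(iii)$ is decisive: the leading term of $\alpha(c^2\delta-\beta)$ is then an approximate square of valuation $v(\alpha)+m$, and an explicit $a$-move (the automorphism realized by the quadratic parametrization, matching the term $-\tfrac{\alpha+2r}{1-\alpha\beta}a^2$ against this leading term) strictly raises $m$ while preserving both invariants. Iterating---transfinitely if necessary, exactly as in the pseudo-convergence argument of Proposition \ref{maxv}, with termination guaranteed by the maximality of $\delta$---drives $m$ into the range $v(\alpha)+m\ge2v(2)$ already settled. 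The delicate heart of the proof is verifying that this matching never fails, i.e.\ that the image $\bigl\{2a-\tfrac{\alpha+2r}{1-\alpha\beta}a^2\bigr\}$ exhausts, after multiplication by $\alpha$, precisely the coset of $S$ singled out by the fine Arf invariant; carrying the bookkeeping of the units $\tfrac1{1-\alpha\beta}\equiv1\pmod{I_0}$, of the factor $1+2\beta r$, and of the admissible $r\in\tfrac2\beta R_{AS}$ in the case $v(\beta)=v(2)$ is where the real work lies, and it is exactly this case analysis that confirms no further obstruction beyond the fine Arf invariant and the class of minimal norms survives.
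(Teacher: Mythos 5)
Your skeleton matches the paper's: well-definedness of the groups defining the classes of minimal norms, necessity via Lemma \ref{r2unimodcls} and Proposition \ref{finArf}$(ii)$, and sufficiency by producing the element $a$ required by Lemma \ref{r2unimodcls}. The necessity direction is fine. But there are two genuine problems.

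First, in the sufficiency direction your treatment of the case ``$v(\alpha)+m$ even and $<2v(2)$'' by a transfinite iteration modelled on Proposition \ref{maxv} does not go through. At limit stages that argument needs a pseudo-limit of the pseudo-convergent sequence you build, and the existence of such a pseudo-limit is exactly what the hypothesis of Proposition \ref{maxv} (no quadratic immediate extension of $\mathbb{K}$) buys; Theorem \ref{v>v2cls} carries no such hypothesis, so your iteration may fail to reach the range $v(\alpha)+m\ge 2v(2)$ over a general $2$-Henselian valuation ring. The iteration is also unnecessary: the paper first uses equality of the classes of minimal norms to reduce to $M_{\gamma,\delta}\cong M_{\alpha,\mu}$, and then equality of fine Arf invariants says $\alpha\beta-\alpha\mu\in S$, where $S$ is \emph{by definition} the set of values $t^{2}+2t$ with $2v(t)>v(2)$. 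So a single $t$ with $\alpha\beta-\alpha\mu=t^{2}+2t$ is handed to you; one checks $2v(t)\ge v(\eta)$, writes $t=-\alpha b$ to get $\mu=\beta+2b-\alpha b^{2}$, and a single application of Lemma \ref{quadeq} (solving $Ah^{2}+Bh+C=0$ for the correction $h=a-b$) converts this into the exact form $\mu=\beta+2a-\frac{\alpha a^{2}}{1-\alpha\beta}$ demanded by Lemma \ref{r2unimodcls}. No stratified case analysis on $m$ and no limit process is needed.

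Second, the well-definedness part is asserted rather than proved where it is actually hard. Saying the discrepancy between two representatives $\tau,\tau'$ is ``absorbed into $(R^{*})^{2}$ together with $\frac{4}{\tau}R_{AS}$'' is precisely the claim to be verified, and in the even and exact cases it rests on nontrivial Artin--Schreier identities such as $(1+\varepsilon b^{2})\rho(r)+\varepsilon(1+\varepsilon b^{2})r^{2}b^{2}=\rho\big((1+\varepsilon b^{2})r\big)$, together with a case split on whether the relevant unit $1\pm rt$ lies in $R^{*}$ or in $I_{0}$. Only the odd case reduces to the observation that $\frac{4}{\tau}R_{AS}$ is insensitive to changing $\tau$ by something of larger valuation. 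As written, the central computations of this half of the theorem are missing.
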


\begin{proof}
The set in question is defined as orbits under the action of the group
$(R^{*})^{2}\big(1+\frac{4}{\tau}R_{AS}\big)$, where $\tau$ represents the
generalized Arf invariant $\eta$. We must thus show that taking another
representative for $\eta$ yields the same group. Now, $\eta$ can be either odd,
even, or exact, so that we have to consider the effect of adding to $\tau$ an
element from $I_{v(\eta)}$, $J_{v(\eta)}$, or $4R_{AS}$ respectively.

Now, adding an element from $I_{v(\eta)}$ to $\tau$ is the same as dividing it
by something from $1+I_{0}$. The effect on $\frac{4}{\tau}R_{AS}$ is
multiplication of $R_{AS}$ by $1+I_{0}$, which leaves it invariant by Lemma
\ref{RASFAS}. In particular, the assertion for odd $\eta$ follows. For even
$\eta$ it remains to consider the effect of adding $\lambda^{2}$ to $\tau$,
where $2v(\lambda)=v(\eta)$. The group $1+\frac{4}{\tau+\lambda^{2}}R_{AS}$
will in general be different from $1+\frac{4}{\tau}R_{AS}$, but we claim that
it is contained in $(R^{*})^{2}\big(1+\frac{4}{\tau}R_{AS}\big)$. Indeed,
Proposition \ref{v<v4} shows that $\frac{\lambda^{2}}{\tau}\not\in1+I_{0}$, so
that $1+\frac{\lambda^{2}}{\tau} \in R^{*}$ and every element of
$1+\frac{4}{\tau+\lambda^{2}}R_{AS}$ may be written as
$1+\frac{4}{\tau+\lambda^{2}}\rho\big[\big(1+\frac{\lambda^{2}}{\tau}\big)r\big]
$ for some $r \in R$. But we may expand
\[\rho\bigg[\bigg(1+\frac{\lambda^{2}}{\tau}\bigg)r\bigg]=\bigg(1+\frac{\lambda^
{2}}{\tau}\bigg)\rho(r)+\frac{\lambda^{2}}{\tau}\bigg(1+\frac{\lambda^{2}}{\tau}
\bigg)r^{2}\] (a direct calculcation using the definition of $\rho$), so that
our element becomes
$1+\frac{4}{\tau}\rho(r)+\frac{4\lambda^{2}r^{2}}{\tau^{2}}$. As
$2v(\lambda)=v(\tau)=v(\eta)<2v(2)$, we find that $\frac{2\lambda r}{\tau} \in
I_{0}$ and $v\big(\frac{4\lambda
r}{\tau}\big)>v\big(\frac{4\lambda^{2}r^{2}}{\tau^{2}}\big)$, so that dividing
this element by $\big(1+\frac{2\lambda r}{\tau}\big)^{2}$ yields an element of
$1+\frac{4}{\tau}\rho(r)+I_{2v(2)-v(\tau)}$. But Lemma \ref{RASFAS} implies
that the latter expression takes the form $1+\frac{4}{\tau}\rho(s)$ for some $s
\in R$, so that our original expression equals
$\big(1+\frac{2\lambda r}{\tau}\big)^{2}\big(1+\frac{4}{\tau}\rho(s)\big)$ and
lies in $(R^{*})^{2}\big(1+\frac{4}{\tau}R_{AS}\big)$ as desired. Interchanging
the roles of $\tau$ and $\tau+\lambda^{2}$ now show that the groups arising
from both numbers coincide, which proves that sets of fine classes of minimal
norms arising from even $\eta$ are also well-defined.

When the generalized Arf invariant $\eta$ is exact, we need to verify that
adding an element from $4R_{AS}$ to $\tau$ leaves the group
$(R^{*})^{2}\big(1+\frac{4}{\tau}R_{AS}\big)$ invariant. Hence we consider an
element of the group $1+\frac{4}{\tau+4\rho(h)}R_{AS}$ for some $h \in R$. Note
that we may always replace $h$ by $1-h$, since they have the same $\rho$-image.
The number $\frac{4\rho(h)}{\tau}$ cannot be in $1+I_{0}$ by Proposition
\ref{v=v4}, so that $1+\frac{4\rho(h)}{\tau} \in R^{*}$ and we write an element
of our group as
$1+\frac{4}{\tau+4\rho(h)}\rho\big[\big(1+\frac{4\rho(h)}{\tau}\big)r\big]$.
Using a similar expansion, the latter expression equals
$1+\frac{4}{\tau}\rho(r)+\frac{16\rho(h)r^{2}}{\tau^{2}}$. Writing the two
$\rho$-images explicitly, we get
\[1+\frac{4}{\tau}(r^{2}-r)+\frac{16r^{2}}{\tau^{2}}(h^{2}-h)=1+\frac{16r^{2}h^{
2}}{\tau^{2}}+\frac{4r^{2}}{\tau}-\frac{4r}{\tau}\bigg(1+\frac{4rh}{\tau}
\bigg).\] We may assume, by replacing $h$ by $1-h$ if necessary, that
$1+\frac{4rh}{\tau} \in R^{*}$. When we divide this expression by
$\big(1+\frac{4rh}{\tau}\big)^{2}$, the sum of the first two terms give an
element of $1+I_{0}$ (which equals $1+\frac{4}{\tau}I_{0}$ since
$\frac{4}{\tau} \in R^{*}$), and the remaining three terms become just
$\frac{4}{\tau}\rho\big(\frac{r}{1+4rh/\tau}\big)$. Invoking Lemma \ref{RASFAS}
again yields an element $s \in R$ such that the whole sum is just
$1+\frac{4}{\tau}\rho(s)$, so that our original element equals
$\big(1+\frac{4rh}{\tau}\big)^{2}\big(1+\frac{4}{\tau}\rho(s)\big)$ and lies in
$(R^{*})^{2}\big(1+\frac{4}{\tau}R_{AS}\big)$. The symmetry between $\tau$ and
$\tau+4\rho(h)$ now establishes the invariance of the group
$(R^{*})^{2}\big(1+\frac{4}{\tau}R_{AS}\big)$ under this operation, so that the
sets of fine classes of minimal norms are well-defined also for exact $\eta$.
This proves the first assertion.

Now, Lemma \ref{r2unimodcls} shows that isomorphic lattices of the form
$M_{\alpha,\beta}$ with $v(\beta) \geq v(2)$ and $v(\alpha\beta)>v(2)$ have the
same class of minimal norms, and that no finer invariant for the minimal norm
exists. Moreover, part $(ii)$ of Proposition \ref{finArf} shows that the fine
Arf invariant is also an invariant of isomorphism classes of such lattices.
Conversely, assume that $M_{\alpha,\beta}$ and $M_{\gamma,\delta}$ have
the same fine Arf invariant and the same class of minimal norms. In particular,
the difference between the valuation of the (common) generalized Arf invariant
$\eta$ of these two lattices and $v(\alpha)$ coincides with its difference from
$v(\gamma)$, so that we consider either the coarse classes of minimal norms in
both lattices or the fine classes of minimal norms arising from $\eta$. Since
the (appropriate) classes of $\alpha$ and $\gamma$ coincide, Lemma
\ref{r2unimodcls} shows that $M_{\gamma,\delta}$ is isomorphic to
$M_{\alpha,\mu}$ for some $\mu \in R$. Moreover, by part $(ii)$ of Proposition
\ref{finArf} the lattice $M_{\alpha,\mu}$ has the same fine Arf invariant as
$M_{\gamma,\delta}$ and $M_{\alpha,\beta}$, meaning that $\alpha\beta-\alpha\mu
\in S$. If the fine (or generalized) Arf invariant vanishes then $\mu=\beta=0$
and we are done. Otherwise $v(\eta)\leq2v(2)$, and we write the difference
$\alpha\beta-\alpha\mu$ as $t^{2}-2t$. We claim that $2v(t) \geq v(\eta)$.
Indeed, otherwise $v(t)<v(2)$ and all the elements $\alpha\mu$, $\alpha\beta$,
and $2t$ have valuations larger than $v(t^{2})$, so that the equality
$\alpha\beta-\alpha\mu=t^{2}-2t$ cannot hold. Now, since
$v(\eta)=v(\alpha\beta)\geq2v(\alpha)$ we can write $t=\alpha b$ for $b \in R$
with $2v(b) \geq v\big(\frac{\beta}{\alpha}\big)$ and obtain the equality
$\mu=\beta+2b-\alpha b^{2}$. We claim that this equality implies an equality of
the form $\mu=\beta+2a-\frac{\alpha a^{2}}{1-\alpha\beta}$ for some $a \in R$
with $2v(a) \geq v\big(\frac{\beta}{\alpha}\big)$. Indeed, write $a=b+h$ in the
desired equality, and using the given relation between $\mu$, $\beta$, and $b$
we obtain the equation $Ah^{2}+Bh+C=0$ with $A=-\frac{\alpha}{1-\alpha\beta}$
(of valuation $v(\alpha)$), $B=2\big(1-\frac{\alpha b}{1-\alpha\beta}\big)$ (of
valuation $v(2)$---recall that $t=\alpha b$ and $\alpha\beta$ are both in
$I_{0}$), and $C=-\frac{\alpha^{2}b^{2}\beta}{1-\alpha\beta}$ (of valuation at
least $v(\alpha)+2v(\beta)$ by the condition on $b$). The inequalities
$v(\alpha\beta)=v(\eta)>v(2)$ and $v(\beta) \geq v(2)$ allow us to apply Lemma
\ref{quadeq}, and show that the valuation $v\big(\frac{C}{B}\big)$ of the
solution $h$ is larger than $v(2)$. This proves the existence of an appropriate
$a$, and Lemma \ref{r2unimodcls} completes the proof of the theorem.
\end{proof}

Since the proof of part $(iii)$ of Proposition \ref{finArf} shows that over an
exact or vanishing generalized Arf invariant there exists only one fine Arf
invariant, the result of Theorem \ref{v>v2cls} in these cases can be phrased as
in the following

\begin{cor}
$(i)$ Assume that $v(\beta)>v(2)$ and $v(\alpha)+v(\beta)\geq2v(2)$. Then the
class of $\alpha$ in $(R/2R)/(R^{*})^{2}$ and the generalized Arf invariant
$\eta$ (the image of $\alpha\beta$ modulo $4R_{AS}$) characterize the the
isomorphism class of $M_{\alpha,\beta}$, where the vanishing of the former
invariant implies the vanishing of the latter. $(ii)$ If
$v(\alpha)=v(\beta)=v(2)$ then the isomorphism classes of such lattices are
characterized by the generalized Arf invariant $\eta$, and in case $\eta\neq0$
(i.e., $\eta$ is exact), also by the class of elements of
$2R/(R^{*})^{2}\big(1+\frac{4}{\tau}R_{AS}\big)$ (of valuation 2), where
$\tau\in4R$ is such that $\eta=\tau+4R_{AS}$, containing all the norms of
primitive elements in a lattice in this isomorphism class. \label{clsv>v4}
\end{cor}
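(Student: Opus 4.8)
The plan is to derive both parts directly from Theorem \ref{v>v2cls} by identifying, in each case, what the invariants ``fine Arf invariant'' and ``class of minimal norms'' specialize to. The key observation driving both parts is supplied by part $(iii)$ of Proposition \ref{finArf}: over an \emph{exact} or \emph{vanishing} generalized Arf invariant there is precisely one fine Arf invariant. This is exactly the regime isolated by the two hypotheses, since $v(\alpha)+v(\beta)\geq2v(2)$ forces $v(\alpha\beta)\geq2v(2)$, placing $\eta$ among the exact and vanishing invariants (where $S\cap4R=4R_{AS}$ and $4R/(S\cap4R)=4(R/R_{AS})$, as established in that proof).

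For part $(i)$, I would argue that if at least one inequality is strict then $v(\beta)>v(2)$ must hold: the combination $v(\alpha)\leq v(\beta)$ with $v(\alpha)+v(\beta)\geq2v(2)$ and strictness somewhere rules out $v(\alpha)=v(\beta)=v(2)$. Hence by the definition of the class of minimal norms preceding the theorem, we are in the case $v(\beta)>v(2)$ where the class of minimal norms is simply the image of $\alpha$ in the coarse classes $(R/2R)/(R^{*})^{2}$. Since the generalized Arf invariant is exact or vanishing, the fine Arf invariant carries no information beyond $\eta$ itself, and $\eta$ is the image of $\alpha\beta$ modulo $4R_{AS}$ by the definition at the end of Section \ref{Uni2}. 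Theorem \ref{v>v2cls} then says that these two data—the coarse class of $\alpha$ and $\eta$—characterize the isomorphism class, which is precisely the claim.

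For part $(ii)$, both inequalities are equalities, so $v(\beta)=v(2)$ and $v(\alpha\beta)=2v(2)$; thus $\eta$ is exact and, again by Proposition \ref{finArf}$(iii)$, the fine Arf invariant is redundant. Here the definition of the class of minimal norms uses the $v(\beta)=v(2)$ branch: it is the image of $\alpha$ under the action of $(R^{*})^{2}\big(1+\frac{4}{\tau}R_{AS}\big)$ with $\tau\in4R$ representing $\eta=\tau+4R_{AS}$. I would note that since $v(\alpha)=v(2)$ the minimal norms all have valuation $v(2)$, but the relevant differences $\gamma-\alpha$ lie in $2R$ by Lemma \ref{r2unimodcls} (refined to $\frac{4}{\beta}R_{AS}$ here, which matches $\frac{4}{\tau}R_{AS}$ up to the identification via $\eta$ furnished by Lemma \ref{RASFAS}); invoking Theorem \ref{v>v2cls} then gives the stated characterization by $\eta$ together with the class in $2R/(R^{*})^{2}\big(1+\frac{4}{\tau}R_{AS}\big)$.

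The main obstacle I anticipate is bookkeeping rather than a genuine difficulty: correctly matching the abstract ``class of minimal norms corresponding to $\eta$'' from the theorem with the concrete quotient $2R/(R^{*})^{2}\big(1+\frac{4}{\tau}R_{AS}\big)$ appearing in part $(ii)$, and verifying that the parenthetical ``of valuation $2$'' is consistent with $v(\alpha)=v(2)$ and with $v(2)$ being the common minimal norm valuation. This amounts to checking that the representative $\tau\in4R$ with $\eta=\tau+4R_{AS}$ yields the same acting group as the $\frac{4}{\beta}R_{AS}$ prescribed by Lemma \ref{r2unimodcls}, which is guaranteed by the well-definedness portion of Theorem \ref{v>v2cls} (the exact-invariant computation showing invariance under replacing $\tau$ by an element of $4R_{AS}$). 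Once these identifications are made explicit, both parts are immediate restatements of Theorem \ref{v>v2cls}.
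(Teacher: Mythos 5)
Your proposal is correct and follows essentially the same route as the paper: the paper's own proof is just two sentences, deriving part $(i)$ directly from Theorem \ref{v>v2cls} and part $(ii)$ from Theorem \ref{v>v2cls} combined with Lemma \ref{norm2}, with the observation about uniqueness of the fine Arf invariant over exact or vanishing generalized Arf invariants stated in the paragraph immediately preceding the corollary. The one small point you gloss over is the final descriptive clause of part $(ii)$ --- that the class in $2R/(R^{*})^{2}\big(1+\frac{4}{\tau}R_{AS}\big)$ is precisely the set of \emph{all} norms of primitive elements of a lattice in the isomorphism class --- which is exactly what the second half of part $(ii)$ of Lemma \ref{norm2} supplies (for $v(\beta)=v(2)=v(\alpha)$ the set $T=(R^{*})^{2}\big(\alpha+\frac{4}{\beta}R_{AS}\big)$ consists of all norms of primitive elements); citing that lemma explicitly closes the only gap.
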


\begin{proof}
Part $(i)$ follows directly from Theorem \ref{v>v2cls}. Part $(ii)$ is obtained
by combining Theorem \ref{v>v2cls} and Lemma \ref{norm2}.
\end{proof}

The case of a vanishing generalized Arf invariant (namely, isotropic lattices)
of Theorem \ref{v>v2cls} and Corollary \ref{clsv>v4} reproduces the result of
Proposition \ref{r2rep0}, though the latter holds over any valuation ring while
Theorem \ref{v>v2cls} and Corollary \ref{clsv>v4} require the $2$-Henselian
property.

\smallskip

Once again, the lattices with classes of minimal norms of valuation 0 are
decomposable. Such a lattice $M_{\alpha,\beta}$, with $\tau \in R$ representing
the fine Arf invariant of $M_{\alpha,\beta}$, is isomorphic to the lattice
$H_{\alpha,\tau}$ spanned by two orthogonal elements $u$ and $w$ of norms
$\alpha$ and $\alpha(\tau-1)$: Indeed, by taking $\tau=\alpha\beta$ we find that
$x=u$ and $y=\frac{u+w}{\alpha}$ form a basis for $H_{\alpha,\tau}$ as an
isomorphic image of $M_{\alpha,\beta}$. Therefore Theorem \ref{v>v2cls} implies
also the following

\begin{cor}
Let $\alpha$ and $\gamma$ be elements in $R^{*}$ and let $\tau$ and $\lambda$
be elements in $I_{v(2)}$ representing generalized Arf invariants. The lattices
$H_{\alpha,\tau}$ and $H_{\gamma,\lambda}$ are isomorphic if and only if
$\tau-\lambda \in S$ and $\alpha$ and $\gamma$ are in the same coarse class of
minimal norms in $(R/2R)/(R^{*})^{2}$. \label{hypv>v2}
\end{cor}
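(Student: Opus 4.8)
The plan is to reduce the statement to Theorem \ref{v>v2cls} through the identification, established in the paragraph immediately preceding the corollary, that $H_{\alpha,\tau} \cong M_{\alpha,\beta}$ with $\beta=\tau/\alpha$ (so that $\alpha\beta=\tau$), and likewise $H_{\gamma,\lambda} \cong M_{\gamma,\delta}$ with $\delta=\lambda/\gamma$. First I would record the valuation data: since $\alpha,\gamma \in R^{*}$, the minimal norms $\alpha$ and $\gamma$ have valuation $0$, while $\tau,\lambda \in I_{v(2)}$ force $v(\beta)=v(\tau)>v(2)$ and $v(\delta)=v(\lambda)>v(2)$. In particular $\beta,\delta \in 2R$, these are the norms of the primitive elements of maximal valuation, and we are squarely in the regime $v(\beta)>v(2)$ treated by Theorem \ref{v>v2cls}, where the relevant invariant is the \emph{coarse} class of minimal norms in $(R/2R)/(R^{*})^{2}$.

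Next I would read off the two invariants. The fine Arf invariant of $M_{\alpha,\beta}$ is the class of $\alpha\beta=\tau$ in $I_{v(2)}/S$, and that of $M_{\gamma,\delta}$ is the class of $\gamma\delta=\lambda$; hence these agree precisely when $\tau-\lambda \in S$. Because the fine Arf invariant projects onto the generalized Arf invariant (part $(iii)$ of Proposition \ref{finArf}), the condition $\tau-\lambda \in S$ also guarantees that both lattices share a common generalized Arf invariant $\eta$ with $v(\eta)>v(2)$, which is exactly what is needed to apply Theorem \ref{v>v2cls}. Similarly, the coarse class of minimal norms of $M_{\alpha,\beta}$ is the image of $\alpha$ in $(R/2R)/(R^{*})^{2}$ and that of $M_{\gamma,\delta}$ is the image of $\gamma$, so these coincide exactly when $\alpha$ and $\gamma$ lie in the same coarse class of minimal norms.

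With both invariants translated, Theorem \ref{v>v2cls} delivers the equivalence at once: $M_{\alpha,\beta} \cong M_{\gamma,\delta}$ if and only if they have equal fine Arf invariants and equal classes of minimal norms, that is, if and only if $\tau-\lambda \in S$ and $\alpha,\gamma$ represent the same coarse class. Transporting this back along $H_{\alpha,\tau} \cong M_{\alpha,\beta}$ and $H_{\gamma,\lambda} \cong M_{\gamma,\delta}$ yields the corollary.

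The step I expect to demand the most care is verifying that the hypotheses of Theorem \ref{v>v2cls} genuinely hold for these decomposable lattices---in particular that $\beta=\tau/\alpha$ is the norm of a primitive element of maximal valuation, so that the fine and generalized Arf invariants of $M_{\alpha,\beta}$ are well-defined. This is precisely where the assumption that $\tau$ and $\lambda$ ``represent generalized Arf invariants'' is used: it is the condition securing maximality and placing $v(\eta)>v(2)$. Beyond this bookkeeping, the argument is a straightforward dictionary translation requiring no fresh computation.
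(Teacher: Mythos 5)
Your proposal is correct and follows exactly the route the paper intends: the paper proves this corollary by the identification $H_{\alpha,\tau}\cong M_{\alpha,\beta}$ with $\beta=\tau/\alpha$ (via the basis $x=\alpha u$, $y=u+w$, giving $\alpha\beta=\tau$) stated in the paragraph immediately preceding the corollary, followed by an application of Theorem \ref{v>v2cls}. Your additional care about the maximality of $\beta$ and the passage from fine to generalized Arf invariants via Proposition \ref{finArf}$(iii)$ is exactly the right bookkeeping, which the paper leaves implicit.
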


As above, the case $\tau=\lambda=0$ in Corollary \ref{hypv>v2} yields Corollary
\ref{hypiso} (under the 2-Henselianity assumption).

\smallskip

Note that Theorem \ref{v>v2cls} and Corollary \ref{hypv>v2} deal only with
lattices whose generalized Arf invariants come from $I_{v(2)}$. Next we show
that these results extend to generalized Arf invariants with valuation
precisely $v(2)$ under some conditions on the 2-Henselian valuation ring $R$.
\begin{prop}
Assume that $R$ satisfies either $(i)$ $v(2)$ is odd, $(ii)$ $\mathbb{F}$ is
perfect, or $(iii)$ $\rho$ is surjective. Then all the lattices
$M_{\alpha,\beta}$ with $\beta\in2R$ having maximal valuation are classified by
their fine Arf invariant and the appropriate class of minimal norms.
\label{v=v2ext}
\end{prop}

\begin{proof}
Define $S^{+}=\{t^{2}-2t|t \in R,2v(t) \geq v(2)\}$. If we can show that
$S^{+}$ is a subgroup of $2R$, then Definition \ref{FAIdef} may be extended to
introduce fine Arf invariants with valuation precisely $v(2)$, using classes in
$2R/S^{+}$. Apart from this extension, there are two additional places in the
proofs of Proposition \ref{finArf} and Theorem \ref{v>v2cls} where we have used
strict inequalities for our arguments. One is where for a generalized Arf
invariant coming from $\alpha\beta \in I_{v(2)}$ and an element $h$ with
$2v(h)>v(2)$, the expression $\alpha\beta(h^{2}-2h)$ lies in $I_{2v(2)}
\subseteq S$ (part $(ii)$ of Proposition \ref{finArf}). The second place
appears at the end of the proof of Theorem \ref{v>v2cls}, where we concluded
that if $\alpha$ and $\beta$ are as above and $b$ satisfies $2v(b) \geq
v\big(\frac{\beta}{\alpha}\big)$, then the sum of the valuations of
$A=-\frac{\alpha}{1-\alpha\beta}$ and of
$C=-\frac{\alpha^{2}b^{2}\beta}{1-\alpha\beta}$ is larger than $2v(2)$, so that
a solution to the equation for $a$ exists by Lemma \ref{quadeq}. Note that the
latter sum $v(AC)$ is just $v(\alpha^{3}b^{2}\beta)$.

Now, in case $(i)$ the conditions $2v(t) \geq v(2)$ and $2v(h) \geq v(2)$
become strict inequalities since they compare the odd valuation $v(2)$ with an
even valuation. Hence $S^{+}=S$ is a group, and classes from $2R/S$ define fine
Arf invariants which are invariants of isomorphism classes of lattices (and map
to generalized Arf invariants as before). As for $v(\alpha^{3}b^{2}\beta)$, it
is at least $v(\alpha^{2}\beta^{2})$ since $2v(b) \geq
v\big(\frac{\beta}{\alpha}\big)$, hence $AC\in4R$. But if $v(\alpha\beta)>v(2)$
then we already have $AC \in I_{2v(2)}$, while in the case of equality
$v\big(\frac{\beta}{\alpha}\big)$ is odd, the equality with $v(b)$ is strict,
and again $AC \in I_{2v(2)}$. This proves for case $(i)$. Hence we consider
cases $(ii)$ and $(iii)$ under the additional assumption that $v(2)$ is even.

Case $(ii)$ with even $v(2)$ is simple: As there are no generalized Arf
invariants of even valuation $v(2)$ by Corollary \ref{v<v4perf}, there is no
need for any extension of the definitions, and all inequalities involving
$v(\alpha\beta)$ remain strict. In case $(iii)$ we consider again the equation
$h^{2}+2(t-s-1)h+2s^{2}-2ts=0$ for $h$ in the proof of part $(i)$ of Proposition
\ref{finArf}, where now $s$ and $t$ give rise to elements from $S^{+}$. By
writing $h=2(1-t+s)g$ this equation becomes
$g^{2}-g+\frac{s^{2}-st}{2(1-t+s)^{2}}=0$ (recall that $s^{2}$ and $st$ are in
$2R$ as above), and this Artin--Schreier equation has a solution by our
assumption on $\rho$ and Lemma \ref{RASFAS}. Moreover, as $S \cap 4R=4R_{AS}$
is the full ideal $4R$ in this case, the conditions $v(\alpha\beta) \geq v(2)$
and $2v(h) \geq v(2)$ are sufficient for $\alpha\beta(h^{2}-2h)$ to be in $4R
\subseteq S$. Hence $S^{+}$ is a group, fine Arf invariants of valuation $v(2)$
are well-defined, and they are preserved under isomorphism of lattices.
Finally, the equation for $a$ can be transformed by similar means to an
Artin--Schreier equation looking for a pre-image of
$\frac{\alpha^{3}b^{2}\beta}{4(1-\alpha\beta-\alpha b)}$, which again exists
under our assumption. This completes the proof of the proposition.
\end{proof}
We remark that extending Corollary \ref{hypv>v2} to the cases $(i)$ or $(iii)$
in Proposition \ref{v=v2ext} requires using the fine classes of minimal norms
arising from the common generalized Arf invariant arising from $\tau$ and
$\lambda$ in case $v(\lambda)=v(\tau)=v(2)$, rather than the coarse classes of
minimal norms appearing in that Corollary. The fact that there are no even
generalized Arf invariants in case $(ii)$ of Proposition \ref{v=v2ext} agrees,
for the case of $R$ is a (complete) discrete valuation ring, with the parity
condition on the weight and norm ideals in Section 93 of \cite{[O]}.

On the other hand, if none of the conditions of Proposition \ref{v=v2ext} is
satisfied, then $S^{+}$ is no longer a group, but generalized Arf invariants of
valuation $v(2)$ exist. Therefore, an appropriate definition of fine Arf
invariants of valuation $v(2)$, which will be preserved under isomorphisms,
requires much more care and will probably be more involved. Attempts to extend
to generalized Arf invariants of valuation smaller than $v(2)$ encounter more
severe difficulties (as no group structure is expected there), and will be
left for future research. However, we point out one fact that arises from the
proof of Lemma \ref{r2unimodcls} in this more general case: If $\delta-\beta$
lies in $I_{2v(2)-v(\alpha)}$ then $M_{\alpha,\beta} \cong M_{\alpha,\delta}$.
Indeed, putting $s=0$ and $c=1$ there (i.e., $z=x$) shows that the vector $w$
has norm $\beta+2a-\frac{\alpha a^{2}}{1-\alpha\beta}$ (note that the
maximality of $\beta$ shows that $1-\alpha\beta$ cannot be in $I_{0}$, by
Proposition \ref{v<v4}). As comparing this value to $\delta$ yields a quadratic
equation in which the sums of the valuations of
$A=-\frac{\alpha}{1-\alpha\beta}$ and $C=\beta-\delta$ is larger than twice the
valuation of $B=2$, Lemma \ref{quadeq} yields a solution $a$ to this equation,
which proves the assertion. Moreover, the assumption that $v(\beta) \geq
v(\alpha)$ was not used in this argument, so that we also deduce
$M_{\alpha,\beta} \cong M_{\gamma,\beta}$ if $\gamma-\alpha \in
I_{2v(2)-v(\beta)}$. This fact will be useful in completing the classification
for lattices over $\mathbb{Z}_{2}[\sqrt{2}]$ in Subsection \ref{Ex} below.

\subsection{Relations to Quadratic Forms and Examples \label{Ex}}

A notion closely related to (symmetric) bilinear forms, which has not appeared
in this paper yet, is the notion of quadratic forms. Recall that a
\emph{quadratic form} on an $R$-module $M$ is a map $q:M \to R$ which satisfies
$q(rx)=r^{2}x^{2}$ for all $r \in R$ and $x \in M$, and such that the map
taking $x$ and $y$ in $M$ to $q(x+y)-q(x)-q(y)$ is a bilinear form on $M$ (this
is the \emph{bilinear form coming from $q$}). We denote this bilinear form
$\varphi(q)$, so that we have a map $\varphi=\varphi_{M}$ from the set of
quadratic forms on $M$ to the set of bilinear forms on $M$. In case $2 \in
R^{*}$, every bilinear form comes from a unique quadratic form, namely
$q(x)=\frac{x^{2}}{2}$. Hence $\varphi$ is a canonical bijection. If $2 \notin
R^{*}$ but is not a zero-divisor in $R$ (i.e., $2\neq0$ and $R$ is an integral
domain), then $\varphi$ is injective, but may not be surjective. This is so,
since we can localize by 2 (making $\varphi$ bijective again), but some
bilinear forms which are $R$-valued will require the quadratic form to take
values in the localization. Those lattices in which the bilinear form comes from
a quadratic form via $\varphi$ are the lattices called \emph{even} in the
terminology of \cite{[Z]} and others. But other lattices exist: E.g.,
$M_{\alpha,\beta}$ is even precisely when $\alpha$ and $\beta$ are both in $2R$,
i.e., the generalized (or equivalently fine) Arf invariant is exact or
vanishing and the class of minimal norms comes from $2R$. In case $2=0$,
however, this map $\varphi$ is in general neither injective nor surjective (this
is the map considered in \cite{[A]} for $R$ a field of characteristic 2). Hence
a quadratic form on a module over an integral domain yields more information
than the one obtained using the bilinear form arising as its $\varphi$-image
alone only if the integral domain in question has a fraction field of
characteristic 2.

The last assertion of Corollary \ref{clsv>v4} shows the relation of exact and
vanishing generalized Arf invariants to the classical Arf invariants: Any field
$\mathbb{F}$ of characteristic 2 is the quotient field of a 2-Henselian
valuation ring $R$ whose fraction field $\mathbb{K}$ has characteristic 0 (for
an example of a such a ring in which the valuation is complete and discrete,
take $R$ to be the ring $W(\mathbb{F})$ of Witt vectors over
$\mathbb{F}$---see, e.g., Section II.6 of \cite{[Se2]} for more details on this
construction). Any non-degenerate (or \emph{fully regular} in the terminology of
\cite{[A]}) quadratic form of dimension 2 over $\mathbb{F}$ is isomorphic to a
form $q:(r,s)\mapsto\lambda r^{2}+rs+\mu s^{2}$ for some $\lambda$ and $\mu$ in
$\mathbb{F}$ (see Theorem 2 of \cite{[A]}---note that by normalizing one of the
basis elements we can make the product 1, i.e., we can take $b_{i}=1$ for all
$i$). Let $\alpha$ and $\beta$ be elements of $2R$ such that
$\frac{\alpha}{2}+I_{0}=\lambda$ and $\frac{\beta}{2}+I_{0}=\mu$. Then the
quadratic form $q$ can be seen as the reduction modulo $I_{0}$ of the map
$z\mapsto\frac{z^{2}}{2}$ for $z=rx+by \in M_{\alpha,\beta}$. If
$M_{\alpha,\beta} \cong M_{\gamma,\delta}$ for some $\gamma$ and $\delta$ in
$2R$ then $q$ is isomorphic over $\mathbb{F}$ to the quadratic form
$Q:(r,s)\mapsto\varphi r^{2}+rs+\psi s^{2}$ for $\varphi=\frac{\gamma}{2}+I_{0}$
and $\psi=\frac{\delta}{2}+I_{0}$ (by reducing the isomorphism modulo $I_{0}$).
On the other hand, Corollary \ref{clsv>v4} implies that the isomorphism class of
$M_{\alpha,\beta}$ is independent of the choice of $\alpha\in2\lambda+I_{v(2)}$
and $\beta\in2\mu+I_{v(2)}$. This implies that if $q$ and $Q$ above are
isomorphic over $\mathbb{F}$ then $M_{\alpha,\beta} \cong M_{\gamma,\delta}$:
Indeed, lifting the isomorphism over $\mathbb{F}$ to any map over $R$ yields an
isomorphism between $M_{\alpha,\beta}$ and $M_{\kappa,\nu}$ for
$\kappa\in2\varphi+I_{v(2)}$ and $\nu\in2\psi+I_{v(2)}$, and the previous
assertion implies $M_{\kappa,\nu} \cong M_{\gamma,\delta}$. Thus, isomorphism
classes of fully regular quadratic forms of rank 2 over $\mathbb{F}$ correspond
to isomorphism classes of lattices $M_{\alpha,\beta}$ over $R$, where $\alpha$
and $\beta$ are in $2R$ (i.e., of even lattices $M_{\alpha,\beta}$). Now, exact
or vanishing generalized Arf invariants are ``4 times'' the Arf invariant
$\Delta$ defined in \cite{[A]} (this means $4\Delta\in4R/4R_{AS}$ for
$\Delta\in\mathbb{F}/\mathbb{F}_{AS}$), and the set of numbers
$\frac{z^{2}}{2}+I_{0}$ obtained from primitive $z \in M_{\alpha,\beta}$ is
precisely the set of squares of non-zero elements of the odd part of the
Clifford algebra of $q$ over $\mathbb{F}$. Since Lemma \ref{norm2} implies that
this set is either $\mathbb{F}$ or an orbit in
$\mathbb{F}/(\mathbb{F}^{*})^{2}\big(1+\frac{4}{\tau}\mathbb{F}_{AS}\big)$
(after division by 2 and dividing modulo $I_{0}$), and the remainder of the
structure of the Clifford algebra is determined by the condition that the two
basis elements $x$ and $y$ are chosen such that $(x,y)=1$, Corollary
\ref{clsv>v4} implies Theorem 3 of \cite{[A]} (In fact, this normalization
shows that applying $\rho$ to the element $xy$ of the Clifford algebra yields
the Arf invariant $\Delta$, the condition about $\Delta$ in that Theorem is
redundant).

The results of Section \ref{Uni2} as well as this Section thus generalize the
classical assertions from \cite{[A]} to many lattices in which the bilinear
form does not necessarily come from a quadratic form. It seems likely that
a similar argument can treat binary quadratic forms over some valuation rings
in which $2=0$ and which are not fields---note that the results of this Section
are contained in Proposition \ref{r2rep0} in case $2=0$ since we assume here
$\beta\in2R$ throughout. We leave this question for future research.

\medskip

We now use our results in order to classify the unimodular rank 2 lattices over
two rings. Recall that every such lattice is isomorphic to some lattice
$M_{\alpha,\beta}$ with $v(\beta) \geq v(\alpha)$, and this lattice is
decomposable if and only if $\alpha$ is invertible, a case in which the lattice
is isomorphic to some lattice $H_{\alpha,\tau}$ (see the paragraph preceding
Corollary \ref{hypv>v2}). We start with $R=\mathbb{Z}_{2}$, the ring of 2-adic
integers. As $\rho(\mathbb{F}_{2})=0$, we have $R_{AS}=I_{0}=2R$. All the
lattices of the form $M_{\alpha,\beta}$ admit primitive vectors with norms of
maximal valuations, by Proposition \ref{maxv} or the finiteness of valuations
smaller than $v(2)$. There are three possible generalized Arf invariants: 0
(vanishing), $4+8R$ (exact), and $2+4R$ (odd). Moreover, the set
$(R/2R)/(R^{*})^{2}$ of coarse classes of minimal norms consists of two
elements, represented by 0 and 1, the latter having valuation 0. The condition
$2v(t)>v(2)$ in the definition of $S$ implies $2|t$ hence $S=8R$. Moreover, both
conditions $(i)$ and $(ii)$ of Proposition \ref{v=v2ext} are satisfied, so that
we can classify all the $\mathbb{Z}_{2}$-lattices $M_{\alpha,\beta}$ using our
method. The isotropic lattices are $M_{0,0}$ (the hyperbolic plane) and
$M_{1,0}$ (which is isomorphic to $H_{1,0}$, hence generated by 2 orthogonal
vectors having opposite norms). Over the exact generalized Arf invariant lies
the fine Arf invariant $4+8R$. A lattice having this fine Arf invariant with
$v(\beta)>v(2)$ must have 1 in its coarse class of minimal norms. Such a lattice
must therefore be isomorphic to $M_{1,4}$, which can be generated by orthogonal
elements of norms 1 and 3 as its isomorph $H_{1,4}$. The multiplicative group
corresponding to this generalized Arf invariant is just
$1+\frac{4}{4}R_{AS}=1+2R=\mathbb{Z}_{2}^{*}$. There is thus only one fine class
of minimal norms of valuation 1 arising from this generalized Arf invariant,
yielding the lattice $M_{2,2}$. The remaining fine Arf invariants are $2+8R$ and
$-2+8R$, lying over the odd generalized Arf invariant $2+4R$. Both have
valuation $v(2)=1$, so that we have to consider the fine classes of minimal
norms of valuation 0 corresponding to $2+4R$. The group by which we divide is
$1+\frac{4}{2}R_{AS}=1+4R$, so that there are two such classes, represented by 1
and $-1$. The invariants 1 and $2+8R$ yield a lattice isomorphic to $M_{1,2}$,
which is generated by two orthogonal elements of norm 1 like $H_{1,2}$. With the
invariants $-1$ and $2+8R$ comes the lattice $M_{-1,-2}$, an orthogonal basis of
which can be taken with both norms $-1$ (consider $H_{-1,-2}$). Taking now the
class with 1 and fine Arf invariant $-2+8R$ yields the lattice $M_{1,-2}$, a
basis of its isomorph $H_{1,-2}$ has norms 1 and $-3$. The last lattice, with
invariants $-1$ and $-2+8R$, must be isomorphic to $M_{-1,2}$, which, being
isomorphic to $H_{-1,2}$, has an orthogonal basis with elements of norms $-1$
and 3. One can easily verify that these results reproduce the results of
\cite{[J2]} for rank 2 unimodular 2-adic lattices, since $-3\equiv5(\mathrm{mod\
}8)$, $-1\equiv7(\mathrm{mod\ }8)$, and the lattice $M_{-1,-2}$ can be written
as $M_{3,-2}$ (since $-1\equiv3(\mathrm{mod\ }4)$) and the isomorphic lattice
$H_{3,-2}$ has an orthonormal basis consisting of two norm 3 vectors.

We now turn to present the explicit picture our results yield for the ring
$R=\mathbb{Z}_{2}[\sqrt{2}]$. The fine Arf invariants we obtain have valuation
larger than $v(2)$ (Corollary \ref{v<v4perf} or condition $(ii)$ of Proposition
\ref{v=v2ext}), and once again $R_{AS}=I_{0}$, which here equals $\sqrt{2}R$.
Proposition \ref{maxv} (or the fact that only finitely many positive elements of
$\Gamma$ are smaller than $v(2)=2$) shows again that in every lattice
$M_{\alpha,\beta}$ we can take $\beta$ to have maximal valuation. The elements
in $1+4\sqrt{2}R$ are squares, and $(R^{*})^{2}/(1+4\sqrt{2}R)$ consists of one
additional non-trivial class, which is represented by
$(1+\sqrt{2})^{2}=3+2\sqrt{2}$. A generalized Arf invariant of a lattice can be
0 (vanishing), $4+4\sqrt{2}R$ (exact), $2\sqrt{2}+4R$ (odd), or $\sqrt{2}+2R$
(odd). Fine Arf invariants are defined only above the first three generalized
Arf invariants. The group $S$ is based on elements satisfying $2v(t)>v(2)$,
which means $2|t$ hence $S=4\sqrt{2}R$. As the action of
$(R^{*})^{2}\subseteq1+2R$ on $R/2R$ is trivial, there are 4 coarse classes of
minimal norms, represented by 0, $\sqrt{2}$, 1, and $1+\sqrt{2}$, with
valuations $\infty$, 1, 0, and 0 respectively. Hence there are 4 isomorphism
classes of isotropic $R$-lattices, in which the generalized and fine Arf
invariants are vanishing, namely $M_{0,0}$, $M_{\sqrt{2},0}$, $M_{1,0}$, and
$M_{1+\sqrt{2},0}$. Considering the exact generalized and fine Arf invariant
$4+4\sqrt{2}R$, as all the non-zero coarse classes of minimal norms have
valuation smaller than $v(2)=2$, we obtain 3 isomorphism classes of non-even
lattices having this fine Arf invariant, which are represented by
$M_{\sqrt{2},2\sqrt{2}}$, $M_{1,4}$, and $M_{1+\sqrt{2},4}$ (recall that
$\frac{4}{1+\sqrt{2}}=4\sqrt{2}-4$ is congruent to 4 modulo $4\sqrt{2}R$). When
considering fine classes of minimal norms arising from this generalized Arf
invariant, the acting group is just $1+\sqrt{2}R=R^{*}$, so that there is only
one element of valuation 2 in this set, giving rise to the lattice $M_{2,2}$.
The generalized Arf invariant $2\sqrt{2}+4R$ appears as the image of two fine
Arf invariants, one being $2\sqrt{2}+4\sqrt{2}R$, and the other one is
$2\sqrt{2}+4+4\sqrt{2}R$. The isomorphism classes of lattices
$M_{\alpha,\beta}$ with $v(\beta)\geq3$ having these fine Arf invariants are
represented by the coarse classes of minimal norms 1 and $1+\sqrt{2}$. The
corresponding lattices are (up to isomorphism) $M_{1,2\sqrt{2}}$ and
$M_{1+\sqrt{2},2\sqrt{2}+4}$ with the fine Arf invariant
$2\sqrt{2}+4\sqrt{2}R$, while $M_{1,2\sqrt{2}+4}$ and
$M_{1+\sqrt{2},2\sqrt{2}}$ have the fine Arf invariant $2\sqrt{2}+4+4\sqrt{2}R$.
The fine classes of minimal norms arising from the generalized Arf invariant
$2\sqrt{2}+4R$ are obtained modulo the action of the group $1+2R$ (which
already contains $(R^{*})^{2}$). As the classes of valuation 1 are represented
by $\sqrt{2}$ and $2+\sqrt{2}$, we obtain the two additional lattices
$M_{\sqrt{2},2}$ and $M_{2+\sqrt{2},2+2\sqrt{2}}$ with the fine Arf invariant
$2\sqrt{2}+4\sqrt{2}R$, together with the lattices $M_{\sqrt{2},2+2\sqrt{2}}$
and $M_{2+\sqrt{2},2}$ having the fine Arf invariant $2\sqrt{2}+4+4\sqrt{2}R$.
Theorem \ref{v>v2cls} shows that these 16 isomorphism classes of $R$-lattices
are distinct, and every unimodular $R$-lattice admitting a primitive vector of
norm in $2R$ belongs to one of these isomorphism classes.

We end this section by completing the classification of those unimodular rank 2
lattices over $R=\mathbb{Z}_{2}[\sqrt{2}]$ to which Theorem \ref{v>v2cls} does
not apply. These lattices all have generalized Arf invariant $\sqrt{2}+2R$, and
they are all decomposable. By the remark at the end of Subsection \ref{MinNorm}
they take the form $M_{\alpha,\beta}$ where $\alpha$ can be taken from any set
of representatives for $R^{*}/(1+4R)$ (there are 8 such classes) and for $\beta$
one may use any set of representatives for the classes in $\sqrt{2}R/4\sqrt{2}R$
having valuation 1 (again 8 such classes). We identify, for the moment, these
sets of representatives with the corresponding classes, so that the two
operations we introduce below on these classes may be considered to be
normalized to always take representatives to representatives. There are 64
pairs in $R^{*}/(1+4R)\times\sqrt{2}R^{*}/(1+4R)$, and there are three
operations on these pairs such that two pairs are connected through these
operations if and only if they yield isomorphic lattices. The first operation
takes $\alpha$ and $\beta$ to $r^{2}\alpha$ and $\frac{\beta}{r^{2}}$ for $r \in
R^{*}$. This operation has exponent 2 here since $(R^{*})^{2}/(1+4R)$ has order
2. In addition, we may replace $\alpha$ by $\alpha+2s+\beta s^{2}$ for $s \in
R$, and $\beta$ by $\frac{\beta}{(1+\beta s)^{2}}$. The valuations of $\alpha$
and $\beta$ and the fact that we consider elements modulo multiplication from
$1+4R$ shows that this operation depends only on the class of $s$ in
$\mathbb{F}=\mathbb{F}_{2}$, yielding another operation of order 2. In
addition, we can take $\beta$ to $\beta+2t+\alpha t^{2}$ and $\alpha$ to
$\frac{\alpha}{(1+\alpha t)^{2}}$ for $t \in R$ with $v(t)>0$. By letting
$\gamma^{2}$ represent the non-trivial class in $(R^{*})^{2}/(1+4R)$ and
choosing $s=-\alpha$ and $t=-\beta$ to represent the non-trivial choices of the
two latter operations, we find that our operations, which we denote $\zeta$,
$\sigma$, and $\tau$, send the pair $(\alpha,\beta)$ to
$(\alpha\gamma^{2},\beta\gamma^{2})$,
$\big(\alpha(\alpha\beta-1),\beta\gamma^{2}\big)$, and
$\big(\alpha\gamma^{2},\beta(\alpha\beta-1)\big)$ respectively. Working modulo
$1+4R$ one sees that $\zeta$ is central, all three have order 2, and the
commutator of $\sigma$ and $\tau$ is $\zeta$. Hence these three operations
generate a dihedral group of order 8, which operates on our set of 64 pairs
without fixed points. It follows that there are 8 orbits, and representatives
for these orbits can be taken to be $M_{1,\sqrt{2}}$, $M_{1,\sqrt{2}+4}$,
$M_{1,-\sqrt{2}}$, $M_{1,-\sqrt{2}+4}$, $M_{-1,\sqrt{2}}$, $M_{-1,\sqrt{2}+4}$,
$M_{-1,-\sqrt{2}}$, and $M_{-1,-\sqrt{2}+4}$ (i.e., and $R$-lattice with
generalized Arf invariant $\sqrt{2}+2R$ is isomorphic to precisely one of these
8 lattices). One can find ad-hoc invariants: The closest analogue of the fine
Arf invariant would be elements in $\sqrt{2}+2\sqrt{2}R$ modulo $4\sqrt{2}R$,
but not for every lattice $M_{\alpha,\beta}$ the product $\alpha\beta$ lies in
this set (sometimes one has to switch to an isomorphic lattice), and the choice
of the elements $\sqrt{2}+2\sqrt{2}R$ rather than $\sqrt{2}+2+2\sqrt{2}R$ does
not have an immediate extension to the general case. As for classes of minimal
norms (which depend on $\alpha\beta$), 1 and $1+\alpha\beta$ (and their images
after multiplying by squares) lie in one class while $-1$ and $-1+\alpha\beta$
(times squares) lie in another class, and this situation does not seem to have a
clear description of the sort of Definition \ref{CMNdef}. This illustrates how
the existence of norms in $2R$ simplifies our method substantially.

\section{Towards Canonical Forms in Residue Characteristic 2 \label{Ch2Can}}

In this section we derive some relations between lattices of 2-Henselian
valuation rings in which $v(2)>0$. The idea is to give canonical
representatives for isomorphism classes of such lattices. This goal remains far
out of reach, but we give some results toward it.

The Jordan decomposition of a lattice $M$, given in Proposition \ref{decom}, is,
in this case, not unique. However, the different Jordan decompositions yielding
isomorphic lattices do have some properties in common:

\begin{prop}
Let $M=\bigoplus_{k=1}^{t}M_{k}$ and $M=\bigoplus_{k=1}^{t}N_{k}$ be two Jordan
decompositons of the same lattice $M$, with $v(M_{k})=v(N_{k})$ for every $k$
(allowing empty components if necessary). Then the uni-valued lattices $N_{k}$
and $M_{k}$ have the same rank (in particular, no empty components are needed in
two such presentations), and one of them has a diagonal basis if and only if
the other one has such a basis. \label{JorInv}
\end{prop}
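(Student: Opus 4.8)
The plan is to attach to $M$ and to each valuation $v_{k}=v(M_{k})=v(N_{k})$ an $\mathbb{F}$-bilinear space that is manifestly intrinsic to $M$ --- built only from the form and from valuation-theoretic submodules, with no reference to any splitting --- and then to read off both assertions from it: its dimension will be the rank of the $k$-th Jordan constituent, and whether it is \emph{alternating} will decide whether that constituent is non-diagonal. Since the same space results whether one computes it through $\bigoplus_{j}M_{j}$ or through $\bigoplus_{j}N_{j}$, comparing the two computations will give $\mathrm{rank}\,M_{k}=\mathrm{rank}\,N_{k}$ (so no empty components are needed) and the equivalence of diagonalizability of $M_{k}$ and of $N_{k}$.

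For $\gamma$ in the value group set $M_{\geq\gamma}=\{x\in M\mid v(x,y)\geq\gamma\text{ for all }y\in M\}$, a submodule depending only on $M$. First I would record how it interacts with a Jordan splitting $M=\bigoplus_{j}M_{j}$: orthogonality gives $M_{\geq\gamma}=\bigoplus_{j}M_{j}^{\geq\gamma}$, and writing $M_{j}=L_{j}(\sigma_{j})$ with $L_{j}$ unimodular and $v(\sigma_{j})=v_{j}$, the unimodularity of $L_{j}$ (so that the image ideal $(x_{j},L_{j})$ is exactly the ideal generated by the coordinates of $x_{j}$) identifies $M_{j}^{\geq\gamma}$ with the set of $x_{j}$ whose coordinates all have valuation $\geq\gamma-v_{j}$. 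Taking $\gamma=v_{k}$ shows that inside $W:=M_{\geq v_{k}}$ the summand $M_{k}$ survives untouched, of valuation exactly $v_{k}$, while every other summand of $W$ has valuation strictly larger than $v_{k}$: for $j>k$ because $v_{j}>v_{k}$, and for $j<k$ because raising the coordinates by $v_{k}-v_{j}$ raises the valuation of the $j$-summand to $2v_{k}-v_{j}>v_{k}$. Thus $M_{k}$ is the unique Jordan piece of $W$ of minimal valuation $v_{k}$. Forming $W_{>v_{k}}=\{x\in W\mid v(x,y)>v_{k}\text{ for all }y\in W\}$, the same computation gives $W_{>v_{k}}=I_{0}M_{k}$ in the $k$-th slot and the whole of every other summand, so that $\bar V_{k}:=W/W_{>v_{k}}$ is canonically $L_{k}/I_{0}L_{k}$, of dimension $\mathrm{rank}\,M_{k}$. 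On $\bar V_{k}$ I define $b_{k}(\bar x,\bar y)=\overline{\sigma_{k}^{-1}(x,y)}$ (reduction modulo $I_{0}$); this is well defined because $x\in W$ forces $(x,M)\subseteq\{v\geq v_{k}\}$, it is independent of the splitting, and it is independent of the choice of $\sigma_{k}$ up to a nonzero scalar (which affects neither dimension nor the alternating property). Under the identification above $b_{k}$ is the reduction modulo $I_{0}$ of the unimodular form of $L_{k}$.

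Finally I would invoke Corollary \ref{diag}: $M_{k}$ is either diagonal or an orthogonal sum of rank $2$ blocks $\{x,y\}$ with $v(x^{2})$ and $v(y^{2})$ both strictly larger than $v(x,y)$. In the diagonal case the valuation $v_{k}$ is attained by the norm of some basis vector $e$, so $b_{k}(\bar e,\bar e)\neq0$ and $b_{k}$ is \emph{not} alternating. In the block case every $z\in M_{k}$ satisfies $v(z^{2})>v_{k}$: on each block the two pure terms are too large by hypothesis, and the cross term $2(x,y)$ is too large precisely because $v(2)>0$; hence $b_{k}(\bar z,\bar z)=0$ for all $z$ and $b_{k}$ is alternating. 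Therefore $M_{k}$ is diagonal if and only if $b_{k}$ is non-alternating. Running the identical construction through $\bigoplus_{j}N_{j}$ identifies the very same $(\bar V_{k},b_{k})$ with $L_{k}'/I_{0}L_{k}'$ for the other splitting, whence $\dim_{\mathbb{F}}\bar V_{k}=\mathrm{rank}\,N_{k}$ and $b_{k}$ is non-alternating iff $N_{k}$ is diagonal. Comparing the two readings of $(\bar V_{k},b_{k})$ proves the proposition, with empty components corresponding to $\bar V_{k}=0$.

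I expect the bulk of the routine work to be the bookkeeping that isolates $M_{k}$ as the unique minimal-valuation constituent of $W=M_{\geq v_{k}}$ and the verification that $W/W_{>v_{k}}$ is canonically $L_{k}/I_{0}L_{k}$ with the stated form. The conceptual crux, and the main obstacle, is the last step: the hypothesis $v(2)>0$ is exactly what pushes every cross term $2(x,y)$ to valuation strictly above $v_{k}$, forcing the rank $2$ blocks to reduce to alternating $\mathbb{F}$-forms while diagonal constituents reduce to non-alternating ones. This single dichotomy is what allows the intrinsic invariant $b_{k}$ to separate the diagonal behaviour from the genuinely indecomposable rank $2$ behaviour, and it is the feature that would fail if $2$ were a unit.
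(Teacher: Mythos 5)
Your proposal is correct and follows essentially the same route as the paper: both arguments build the intrinsic submodule $M^{v}=\{x\in M\mid v(x,y)\geq v\ \forall y\in M\}$, split it along either Jordan decomposition using the unimodularity of the rescaled components, and pass to an $\mathbb{F}$-bilinear space whose dimension recovers $\mathrm{rank}\,M_{k}$ and whose possession of a vector of nonzero norm (equivalently, your non-alternating criterion, via Corollary \ref{diag} and $v(2)>0$) detects diagonalizability. Your quotient $W/W_{>v_{k}}$ is just a cleaner packaging of the paper's ``maximal non-degenerate subspace of $M_{v}\big(\frac{1}{a}\big)\otimes\mathbb{F}$,'' so the two proofs are the same in substance.
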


\begin{proof}
We use the same method as in Section 93 of \cite{[O]}. Take some $0 \leq
v\in\Gamma$, and consider the subset $M_{v}$ of all elements $x \in M$ such
that $v(x,y) \geq v$ for every $y \in M$. We claim that this is a sub-lattice of
$M$. Note that since $R$ is not necessarily Noetherian (because the valuation is
not discrete), the assertion does not follow from the fact that $M^{v}$ is a
submodule of $M$: Indeed, the condition $v(x,y) \geq v$ can be interpreted as
$(x,y)$ being in the principal ideal $J$ of elements with valuation at least
$v$, and if we apply this condition for a non-principal ideal $J$ then the
resulting subset is not a finitely generated submodule of $M$. Now, if $M$
decomposes as $L \oplus N$ then $M_{v}$ decomposes as $L_{v} \oplus N_{v}$, so
that in particular $M_{v}$ decomposes either as $\bigoplus_{k=1}^{t}M_{k,v}$ or
as $\bigoplus_{k=1}^{t}N_{k,v}$. Let $a \in R$ with $v(a)=v$. If $N$ is
uni-valued, say $N=L(\sigma)$ with $L$ unimodular and $v(\sigma)=v(N)$, then
$N_{v}=N$ if $v(N) \geq v$ and $N_{v}=\frac{a}{\sigma}N$ if $v(N) \leq v$: The
first assertion is obvious, and the second assertion holds because any primitive
element $x \in N$ satisfies $\{(x,y)|y \in N\}=\sigma R$ since $N$ is
uni-valued. It follows that $M_{k,v}$ (or $N_{k,v}$) are lattices for every $k$,
and $M_{v}$ is a sub-lattice of $M$.

The lattice $M_{v}$ has valuation at least $v$. Moreover, its decompositions as
$\bigoplus_{k=1}^{t}M_{k,v}$ or $\bigoplus_{k=1}^{t}N_{k,v}$ are decompositions
to uni-valued lattices (but not necessarily in increasing valuation orders). To
see this, we examine $N_{v}$ for the uni-valued lattice $N=L(\sigma)$ again. If
$v(N) \geq v$ then $N_{v}=N=L(\sigma)$ is uni-valued with valuation
$v(N)=v(\sigma)$, while if $v(N) \leq v$ then $N_{v}=\frac{a}{\sigma}N$ is
isomorphic to $L\big(\frac{a^{2}}{\sigma}\big)$ and has valuation $2v-v(N)$. In
particular, $v(N_{v})>v$ unless $v(N)=v$. Consider now
$M_{v}\big(\frac{1}{a}\big)$. The inequality $v(M_{v}) \geq v=v(a)$ shows that
it is still a lattice, and we take the tensor product of this lattice with
$\mathbb{F}$. The images of all the components $M_{k,v}$ or $N_{k,v}$ with
$v(M_{k})=v(N_{k}) \neq v$ become degenerate in this $\mathbb{F}$-valued
bilinear form, and a maximal non-degenerate subspace of this $\mathbb{F}$-vector
space arises from $M_{k}$ or $N_{k}$ in case $v(M_{k})=v(N_{k})=v$. In
particular the ranks of $M_{k}$ and $N_{k}$ coincide for each $k$, and Corollary
\ref{diag} shows that each of them has an orthogonal basis if and only if some
element of $M_{v}\big(\frac{1}{a}\big)$ has a norm not in $I_{0}$. This proves
the proposition.
\end{proof}

A more detailed examination of the proof of Proposition \ref{JorInv} allows one
to derive a stronger assertion. Define, for each $2 \leq k<t$, the element
$u_{k}$ of $\Gamma$ to be $\min\{v(M_{k})-v(M_{k-1}),v(M_{k+1})-v(M_{k})\}>0$,
and for the extremal values $k=1$ and $k=t$ let $u_{1}=v(M_{2})-v(M_{1})$ and
$u_{t}=v(M_{t})-v(M_{t-1})$. Replacing $\mathbb{F}=R/I_{0}$ by $R/b_{k}R$ with
$b_{k} \in I_{0}$ having valuation $u_{k}$ shows that the images of
$M_{k}\big(\frac{1}{a}\big)$ and $N_{k}\big(\frac{1}{a}\big)$ modulo $b_{k}R$
are isomorphic. This implies

\begin{cor}
$(i)$ The two sets $\big\{x^{2}+ab_{k}R\big|x \in M_{k}\big\}$ and
$\big\{y^{2}+ab_{k}R\big|y \in N_{k}\big\}$ are the same subset of $R/ab_{k}R$.
$(ii)$ If $u_{k}>2v(2)$ then $M_{k} \cong N_{k}$. \label{minnormcomp}
\end{cor}

\begin{proof}
Part $(i)$ follows directly from the isomorphism $M_{k}\big(\frac{1}{a}\big)
\cong N_{k}\big(\frac{1}{a}\big)$ modulo $b_{k}R$ (alternatively, this set is
just the images of all norms from $M_{v}$ modulo $ab_{k}R$, and it is contained
in $aR/ab_{k}R$). Part $(ii)$ is a consequence of this isomorphism and Theorem
\ref{approxiso}.
\end{proof}

In fact, if $2 \in R^{*}$ then the condition $u_{k}>2v(2)$ is satisfied for any
$k$. Thus, Corollary \ref{minnormcomp} yields another proof of Theorem
\ref{isocomp}.

\smallskip

We shall define an order on the set of Jordan decompositions of lattices,
together with explicit forms of the components, in which one such Jordan
decomposition (with additional data) is larger than another one if it is more
canonical in the sense explained below. The idea is to define certain unimodular
components to be more canonical than others, and certain forms of such a
component as more canonical than other forms of the same component. After fixing
$\sigma_{v}$ for every $v$, the more canonical uni-valued components of
valuation $v$ are the more canonical unimodular ones with the bilinear form
multiplied by $\sigma_{v}$. For a general lattice $M$, we say that the Jordan
decomposition $\bigoplus_{k=1}^{t}M_{k}$ of $M$ is more canonical than
$\bigoplus_{k=1}^{t}N_{k}$ if there exists some $1 \leq l \leq t$ such that
$M_{k}=N_{k}$ for all $k<l$ and $M_{l}$ is more canonical than $N_{l}$. A
canonical form of a lattice $M$ would be a Jordan decomposition of $M$, with the
components given in a specific form, which is more canonical than any other
Jordan decomposition of $M$ or any other expression for the compoenents.

Before we give the details, we present the case $R=\mathbb{Z}_{2}$ considered in
\cite{[J2]}. In this case the unimodular components have symbols resembling
those arising from lattices over $\mathbb{Z}_{p}$ for odd $p$. The symbols take
the form $1^{\varepsilon n}_{t}$ or $1^{\varepsilon n}_{II}$, where $n$ is again
the rank and $\varepsilon$ is a Legendre symbol related to the discriminant of
the lattice. The additional index $t$ denotes an \emph{odd} (or \emph{properly
primitive} in the terminology of \cite{[J2]} and others) component, admitting an
orthonormal basis in which $t\in\mathbb{Z}/8\mathbb{Z}$ is the image of the sum
of the norms of the elements of such a basis. On the other hand, an index $II$
means that the component is \emph{even} (or \emph{improperly primitive}), i.e.,
admitting no orthonormal basis (see Corollary \ref{diag}). The parameter
$\lambda$ of \cite{[J2]} equals
$\varepsilon\cdot(-1)^{\frac{(n-t-4)(n-t-6)}{8}}$ in this
notation. Uni-valued lattices have symbols $(2^{k})^{\varepsilon
n}_{t}$ or $(2^{k})^{\varepsilon n}_{II}$ (standing for the unimodular lattice
$1^{\varepsilon n}_{t}$ or $1^{\varepsilon n}_{II}$ with the bilinear form
multiplied by $2^{k}$), and a Jordan decomposition of a general 2-adic lattice
is a product of such expressions (like for $p$-adic lattices for odd $p$),
yielding again a symbol for the lattice (with the chosen Jordan decomposition).
However, in this case different symbols can give rise to isomorphic lattices
(or equivalently, two different Jordan decompositions of the same lattice may
yield different symbols for the same lattice). Now, \cite{[J2]} considers a rank
1 lattice over $R=\mathbb{Z}_{2}$ representing $(R^{*})^{2}=1+8\mathbb{Z}_{2}$
to be more canonical than the other unimodular rank 1 lattices, and $M_{0,0}$
(whose generalized Arf invariant is 0 hence is vanishing) to be more canonical
than $M_{2,2}$ (with exact generalized Arf invariant $4+8\mathbb{Z}_{2}$).
Moreover, here $\Gamma=\mathbb{Z}$, and for $0 \leq v\in\Gamma$ we take
$\sigma_{v}=2^{v}$. \cite{[J2]} shows how to define the order of being more
canonical on all possible Jordan decompositions, and the canonical form of a
lattice $M$ appearing in Theorem 1 of \cite{[J2]} is the Jordan decomposition of
$M$ which is the most canonical one in this order. We remark that the order
depends on the (arbitrary) choice, which of $3+8\mathbb{Z}_{2}$ and
$7+8\mathbb{Z}_{2}$ is more canonical, a choice which is harder to generalize in
the arguments below (a choice of similar type appears also in the classification
of unimodular rank 2 lattices over $R=\mathbb{Z}_{2}[\sqrt{2}]$ having
generalized Arf invariant $\sqrt{2}+2R$ at the end of Subsection \ref{Ex}).

We now define when one form of a unimodular lattice is more canonical than
another form, for lattices over a 2-Henselian valuation ring $R$ in which
$v(2)>0$. First, a canonical form is based either on an orthogonal basis (if it
exists) or of a direct sum of lattices of the form $M_{\alpha,\beta}$ with
$\alpha$ and $\beta$ in $I_{0}$ (the proof of Corollary \ref{diag} shows that
such a form always exists for a unimodular lattice). In order to define the
further relations in the order, we say that an element $f \in R^{*}$ is
\emph{closer to 1} than $g \in R^{*}$ if $v(f-1)>v(g-1)$. Now, one form of a
lattice is more canonical than another form of the same lattice (or from a form
of a different lattice) if it has discriminant closer to 1. If the lattice
admits an orthongonal basis, then one orthongonal basis is more canonical than
another if it contains more elements of norms in $1+I_{0}$. If two bases have
the same number of elements with norms in  $1+I_{0}$, we order the base such
that the elements whose norms are closer to 1 come first. Then the basis
$x_{j}$, $1 \leq j \leq n$ is more canonical then $y_{j}$, $1 \leq j \leq n$ if
there exists some $1 \leq k \leq n$ such that $x_{k}^{2}$ is closer to 1 than
$y_{k}^{2}$ and $v(x_{j}^{2}-1)=v(y_{j}^{2}-1)$ for all $j<k$. In particular,
an orthonormal basis containing a maximal set of elements of norm precisely 1
will be more canonical than a basis not having this property. For a lattice of
the form $\bigoplus_{j}M_{\alpha_{j},\beta_{j}}$, we choose the order such that
the valuations of generalized Arf invariants are decreasing. Then one form is
more canonical than another using a condition similar to the orthogonal base
case, with ``$x^{2}$ being closer to 1'' replaced by ``the generalized Arf
invariant having higher valuation''. For two unimodular lattices given in a
certain form, we call one of them more canonical than the other according to the
same rules.

Let $L$ be a unimodular lattice over $R$ (a 2-Henselian valuation ring with
$v(2)>0$) having an orthonormal basis. Using the argument of Section
\ref{Jordan}, we find that the reduction of $L$ modulo $I_{0}$ decomposes as the
orthogonal direct sum of elements of norms $1+I_{0}$, and an
$\mathbb{F}$-lattice in which no norm equals $1+I_{0}$. Lifting this basis to a
basis of $L$ and altering by elements of $I_{0}$, we obtain an orthogonal basis
$x_{j}$, $1 \leq j \leq n$ for $M$ in which $x_{j}^{2}\in1+I_{0}$ for $j \leq
l$, and no combination of $x_{j}$ with $l+1<j \leq n$ has a norm in $1+I_{0}$.
If $\mathbb{F}$ is perfect than any non-zero norm is a square times an element
of $1+I_{0}$, so that the reduction can always be taken orthonormal and $l=n$.
If $M$ is a lattice with Jordan decomposition $M=\bigoplus_{k=1}^{t}M_{k}$ such
that $M_{1}=L$, then mixing with the components $M_{k}$ of higher valuation
cannot yield norms from $\bigoplus_{j=l+1}^{n}Rx_{j}\oplus\bigoplus_{k>1}M_{k}$
which are in $1+I_{0}$, hence cannot render our form of $L=M_{1}$ more
canonical.

We now turn to unimodular rank one components generated by an element with a
norm in $1+I_{0}$. Recall that a more canonical form for such a lattice will be
based on a generator $x$  whose norm is such that $v(x^{2}-1)$ is large. Now, if
we can have a most canonical form for such a lattice (i.e., $x^{2}=1+r$ with the
maximal possible $v(r)$, which is thus positive) then either $r=0$, $v(r)<2v(2)$
and is odd, $v(r)<2v(2)$ is even and $r$ is not in $\sigma^{2}R^{2}+I_{v(r)}$
for $\sigma \in R$ with $2v(\sigma)=v(r)$, or $v(r)=2v(2)$ and $r$ is not in
$4R_{AS}$. Indeed, if $v(r)>2v(2)$ then $x^{2}$ is a square by Lemma
\ref{quadeq}. Otherwise, we compare $1+r$ to $c^{2}(1+r)$, $c$ has to be $1+h$
for $h \in I_{0}$ with $2v(h) \geq v(r)$, and considerations like those
presented in Section \ref{Uni2} prove the assertion.

The first step towards a canonical form is provided by the following

\begin{prop}
Let $M$ be a unimodular lattice generated by two orthogonal elements $x$ and
$y$, whose norms are norms $1+r$ and $1+s$ respectively. Assume that $v(s) \geq
v(r)>0$, $v(s)$ is maximal, and $r$ is such that $v(r)$ is maximal among the
norms of primitive generators of $(Ry)^{\perp}$. If $v(r)$ is smaller than both
$v(s)$ and $v(2)$ then the generalized Arf invariant which $r$ represents is an
invariant of the lattice. If $v(r)+v(s)>2v(2)$ then $M$ is isomorphic to a
lattice spanned by two orthogonal elements of norms 1 and $(1+r)(1+s)$
respectively, hence $s=0$ by maximality. If $s\neq0$ but $v(s)$ is maximal (and
$v(s)>0$) then $r$ and $s$ satisfy the conditions of Propositions \ref{v<v4} or
\ref{v=v4} with $\alpha=r$ and $\beta=s$. \label{rep1}
\end{prop}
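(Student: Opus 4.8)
The plan is to analyze the lattice $M = Rx \oplus Ry$ with $x^2 = 1+r$, $y^2 = 1+s$, $(x,y)=0$, by passing to the associated form $M_{\alpha,\beta}$ so that the earlier machinery of Section \ref{Uni2} applies. The key observation is that $M$ is a unimodular lattice whose basis $\{x,y\}$ is \emph{orthogonal}, so its discriminant is $(1+r)(1+s)$, and this quantity (modulo $(R^*)^2$) is the natural carrier of the generalized Arf invariant in the orthogonal picture. I will treat the three claimed conclusions in order, since they correspond exactly to the trichotomy $v(r)+v(s)<2v(2)$ (odd or even), $v(r)+v(s)>2v(2)$, and $v(r)+v(s)=2v(2)$ already organized in Propositions \ref{v<v4} and \ref{v=v4}.

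\textbf{First conclusion (invariance of the generalized Arf invariant when $v(r)<v(s),v(2)$).}
First I would show that when $v(r)$ is strictly smaller than both $v(s)$ and $v(2)$, the element $x$ is a primitive vector of minimal norm valuation, and $y$ is a primitive vector whose norm has maximal valuation among such vectors (this is where the maximality hypotheses on $v(r)$ and $v(s)$ enter). The point is that the discriminant $(1+r)(1+s)$ equals $1 + (r+s+rs)$, and since $v(r)<v(s)$ and $v(r)<v(2)$ the dominant deviation from $1$ is $r$ itself, of valuation $v(r)<2v(2)$. Writing $M$ in the form $M_{\alpha,\beta}$ via a change of basis, the product $\alpha\beta$ will have valuation governed by $v(r)$, and I would invoke the construction preceding Proposition \ref{Arf} to read off the generalized Arf invariant. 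Its well-definedness (independence of the presentation) is then exactly Proposition \ref{Arf}, so the invariance claim follows once I verify that the valuation data match.

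\textbf{Second conclusion ($v(r)+v(s)>2v(2)$).}
Here I would argue that $M$ becomes isotropic-adjacent: the hypothesis $v(r)+v(s)>2v(2)$ forces, via Lemma \ref{quadeq} applied to the quadratic $f(t)=(1+r)t^2 + 2\cdot 0 \cdot t + \ldots$ — more precisely to the equation expressing when a vector $ax+by$ has norm exactly $1$ — the existence of a primitive vector of norm $1$. Concretely, I want $u \in M$ with $u^2=1$; setting $u = ax+by$ gives $a^2(1+r)+b^2(1+s)=1$, and the size condition $v(r)+v(s)>2v(2)$ lets me solve this by a Hensel/Lemma \ref{quadeq} argument after normalizing. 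Once a norm-$1$ primitive vector $u$ exists, Lemma \ref{sublat} splits $M = Ru \oplus (Ru)^\perp$, and since $M$ is unimodular of rank $2$ the orthogonal complement is rank $1$ with norm equal to $\det M = (1+r)(1+s)$ up to $(R^*)^2$. This yields the asserted presentation by orthogonal elements of norms $1$ and $(1+r)(1+s)$; maximality of $v(s)$ then forces $s=0$ because otherwise $(1+r)(1+s)$ would be a unit congruent to $1$ of some positive even-or-larger valuation deviation, contradicting that $y$ already realized the maximal valuation.

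\textbf{Third conclusion and the main obstacle.}
For the boundary case, I would transcribe $M$ into $M_{\alpha,\beta}$ form and verify directly that $v(\alpha)+v(\beta)$ equals either something $<2v(2)$ or $=2v(2)$, so that the hypotheses of Proposition \ref{v<v4} or Proposition \ref{v=v4} are met; the content is checking that ``$s\neq0$ maximal'' translates into ``$\beta$ maximal'' in the transcribed basis, after which those propositions apply verbatim. \emph{The hard part will be the change of basis itself}: an orthogonal pair $\{x,y\}$ with both norms in $1+I_0$ does not have the normalized cross term $(x,y)=1$ of the $M_{\alpha,\beta}$ convention, so I must produce an explicit unimodular transformation carrying $\{x,y\}$ to a basis $\{x',y'\}$ with $(x',y')=1$ while tracking precisely how $v(x'^2)$, $v(y'^2)$, and their product behave. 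The delicate bookkeeping is ensuring that maximality of $v(s)$ is preserved (not accidentally improved) under this transformation and that no exceptional degeneration occurs when $v(r)+v(s)$ sits exactly at $2v(2)$; this is where I expect to spend the most care, using Lemma \ref{vnorm} to control the norm valuations of the transformed basis vectors.
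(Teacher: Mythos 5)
Your overall strategy --- convert the orthogonal presentation into the normalized form $M_{\alpha,\beta}$ with unit pairing and then quote Propositions \ref{v<v4}, \ref{v=v4} and \ref{Arf} --- is genuinely different from the paper's proof, which never leaves the orthogonal picture: it parametrizes the orthogonal bases of $M$ by the one-parameter family $x+(1+r)ty$, $y-(1+s)tx$ divided by $1+t$, computes the resulting norms $1+u$ and $1+w$ explicitly, and reads all three conclusions off those two rational expressions in $t$. Your route could in principle work (with $x'=x$, $y'=(x+y)/(1+r)$ one gets $\alpha=1+r \in R^{*}$, $\beta=(2+r+s)/(1+r)^{2}$, and $\alpha\beta \equiv r$ modulo $I_{v(r)}$ when $v(r)<\min\{v(s),v(2)\}$), but as written it has two genuine gaps.

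First, what you defer to ``the hard part'' is in fact the entire content of the statement: you must show that maximality of $v(r)$ and $v(s)$ in the orthogonal sense (over all orthogonal presentations of $M$) translates into maximality of $v(\beta)$ in the sense required by Propositions \ref{v<v4}, \ref{v=v4} and \ref{Arf} (valuations of norms of primitive elements). This is not a formality: note for instance that $v(\beta)=\min\{v(2),v(r)\}$, not $v(s)$, so the two maximality notions do not match up naively. Relatedly, your assertion that $x$ is ``a primitive vector of minimal norm valuation'' conflates $v(x^{2})$ (which is $0$ here, as for a generic primitive vector) with $v(x^{2}-1)$; the quantity being maximized is the latter.

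Second, in the case $v(r)+v(s)>2v(2)$ your plan to solve $a^{2}(1+r)+b^{2}(1+s)=1$ ``by a Hensel/Lemma \ref{quadeq} argument after normalizing'' does not go through as stated. The obvious normalizations ($a=1$ or $b=1$) lead to conditions like $b^{2}=-r/(1+s)$, which require $r$ or $s$ to be approximate squares and fail when, say, $v(s)$ is odd; Lemma \ref{quadeq} handles one-variable quadratics, and the whole difficulty is finding the right one-parameter family. The paper's choice $u=\bigl(y-(1+s)tx\bigr)/(1+t)$ turns $u^{2}=1$ into $t^{2}\bigl[(1+r)(1+s)^{2}-1\bigr]-2t+s=0$, whose coefficients $A$, $B=-2$, $C=s$ satisfy $v(AC)\geq v(r)+v(s)>2v(2)=v(B^{2})$, so that Lemma \ref{quadeq} applies and yields $t$ of valuation $v(s/2)>0$. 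Without some such device your second conclusion is not established.
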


\begin{proof}
For any $t \in R$, the elements $x+(1+r)ty$ and $y-(1+s)tx$ are orthogonal and
have norms $(1+r)[1+t^{2}(1+r)(1+s)]$ and $(1+s)[1+t^{2}(1+r)(1+s)]$
respectively. We take only $t$ which is not in $1+I_{0}$, so that these vectors
are primitive and generate $M$. We can thus divide these elements by $1+t$, and
obtain generators of $M$ having norms $1+u$ and $1+w$ with
\[u=\frac{t^{2}(1+s)(r^{2}+2r)+r-2t+st^{2}}{(1+t)^{2}},\quad
w=\frac{t^{2}(1+r)(s^{2}+2s)+s-2t+rt^{2}}{(1+t)^{2}}.\] Every presentation of
$M$ with an orthogonal basis is obtained in this way, up to multiplying $1+u$
and $1+w$ by elements from $(R^{*})^{2}$: This follows directly from primitivity
and orthogonality. We are looking for isomorphic presentations of $M$ with $v(w)
\geq v(s)$. Hence if $v(r)<v(s)$ we take $t$ either with $2v(t) \geq
v\big(\frac{s}{r}\big)>0$ or $v(t) \geq v\big(\frac{2}{r}\big)>0$. It follows
that $u$ represents the same generalized Arf invariant as $r$, and by the usual
maximality argument this generalized Arf invariant is an invariant of the
isomorphism class of this lattice. If $v(r)+v(s)>2v(2)$ (hence $v(s)>v(2)$) then
the equation $w=0$ is quadratic in $t$, with $A$ of valuation at least $v(r)$
and with $B=-2$ and $C=s$. Lemma \ref{quadeq} gives a solution $t$ of valuation
$v\big(\frac{s}{2}\big)>0$ to this equation, so that $w=0$ can indeed be
obtained. The evaluation of $1+u$ as $(1+r)(1+s)$ is carried out either using
the equation for $t$ or using discriminant considerations. Now assume that
$s\neq0$ and $v(s)$ is maximal (so that no element of $M$ has norm precisely 1).
The maximality of $s$ implies that $v(s-2t+rt^{2}) \leq v(s)$ for every $t \in
R\setminus(1+I_{0})$, since the denominator in the expression for $w$ is in
$R^{*}$ and the other term in the numerator has valuation larger than $v(s)$.
Arguments similar to those of Section \ref{Uni2} now complete the proof of the
proposition.
\end{proof}

A slight modification of the proof of Proposition \ref{rep1} shows that if a
lattice $L$ admits an orthonormal basis with norms in $1+I_{0}$ such that one
norm is $1+r$ with $v(r)<v(2)$ and all the other norms are closer to 1 that
$1+r$ then $r$ defines a generalized Arf invariant which is an invariant of $L$.

The effect of combining a lattice $M_{\alpha,\beta}$ (with $\alpha$ and $\beta$
in $I_{0}$) with a unimodular rank 1 lattice spanned by a vector $z$ with
$z^{2}=u \in R^{*}$ is already considered in the proof of Corollary \ref{diag}.
The norms of the three basis elements given there in this case are $u+\alpha
t^{2}$, $t^{2}u+u^{2}\beta$, and $-(u+\alpha
t^{2})(t^{2}+u\beta)(1-\alpha\beta)$ respectively. Modulo $I_{0}$, these norms
are $u+I_{0}$, $t^{2}u+I_{0}$, and $-t^{2}u+I_{0}$, which are all equivalent to
$u+I_{0}$ modulo $(R^{*})^{2}$ since $\mathbb{F}$ has characteristic 2.

\smallskip

We now examine the effect of adding a lattice of positive valuation to a
unimodular lattice.

\begin{prop}
Let $M$ be a unimodular lattice whose discriminant in a given basis is in
$1+I_{0}$, and let $L$ be a lattice with $v(L)>0$. Write the discriminant of
$M$ in this basis as $1+r$, and assume that there exist primitive elements $x
\in M$ and $y \in L$, with norms $a$ and $b$ respectively, such that $t^{2}ab
\in r+I_{v(r)}$ for some $t \in R$. Then, the lattice $M \oplus L$ is isomorphic
to $N \oplus K$ with $N$ having the same reduction modulo $I_{0}$ as $M$ and has
discriminant $1+s$ with $v(s)>v(r)$, and $K$ is a lattice with the same
valuation as $L$ and whose discriminant is the discriminant of $L$ multiplied
by $1+t^{2}ab$. \label{umod+I0}
\end{prop}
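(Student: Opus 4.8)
The plan is to exhibit an explicit new \emph{orthogonal} decomposition $M\oplus L=N\oplus K$ obtained by a determinant-one ``rotation'' that mixes the distinguished vectors $x\in M$ and $y\in L$, thereby transferring a controlled amount of norm from the unimodular side to the positive-valuation side. Since $x$ is a primitive vector of unit norm $a=x^{2}\in R^{*}$ (the situation here, where $M$ is generated so as to have such an element), Lemma \ref{sublat} applies to the rank-one sublattice $Rx$ and gives an orthogonal splitting $M=Rx\oplus M''$ with $M''=(Rx)^{\perp}_{M}$; hence $M\oplus L=M''\perp(Rx\oplus L)$, with $M''$ orthogonal both to $Rx$ and to $L$. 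First I would set $y'=y+tb\,x$ and $x'=\tfrac{1}{1+t^{2}ab}\,x-\tfrac{ta}{1+t^{2}ab}\,y$, noting $1+t^{2}ab\in 1+I_{0}\subseteq R^{*}$ (as $t^{2}ab\in r+I_{v(r)}\subseteq I_{0}$), so both vectors lie in $Rx\oplus L$ and $x'$ is primitive. A short computation gives $y'^{2}=b(1+t^{2}ab)$, $x'^{2}=\tfrac{a}{1+t^{2}ab}$, and $(x',y')=0$, while the transition matrix from $\{x,y\}$ to $\{x',y'\}$ has determinant $1$.

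The key structural step is then to split twice by Lemma \ref{sublat}. Since $x'^{2}=\tfrac{a}{1+t^{2}ab}\in R^{*}$, that lemma splits $Rx\oplus L=Rx'\oplus K$ with $K=(Rx')^{\perp}_{Rx\oplus L}$. Setting $N=Rx'\oplus M''$, I would check that $x'\perp M''$ (which holds because $x\perp M''$ and $y\in L\perp M''$), so that $N$ is a genuine orthogonal sum, and conclude $M\oplus L=M''\perp(Rx'\oplus K)=N\oplus K$ orthogonally. The discriminant assertions then follow by multiplicativity of the discriminant over orthogonal sums (up to $(R^{*})^{2}$): writing $\mathrm{disc}(M)=x^{2}\det\mathrm{Gram}(M'')=(1+r)$, one gets $\mathrm{disc}(N)=x'^{2}\det\mathrm{Gram}(M'')=\tfrac{1+r}{1+t^{2}ab}$, which I would rewrite as $1+s$ with $s=-\tfrac{\rho}{1+r+\rho}$ where $t^{2}ab=r+\rho$, $v(\rho)>v(r)$, so $v(s)=v(\rho)>v(r)$ as required; likewise $\mathrm{disc}(K)=\tfrac{x^{2}}{x'^{2}}\,\mathrm{disc}(L)=(1+t^{2}ab)\,\mathrm{disc}(L)$. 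Moreover $x'^{2}=\tfrac{a}{1+t^{2}ab}\equiv a=x^{2}\pmod{I_{0}}$, and since $N$ and $M$ share the summand $M''$, the $\mathbb{F}$-reductions of $N$ and $M$ are isometric, giving the claim that $N$ has the same reduction modulo $I_{0}$ as $M$.

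The hard part will be verifying that $K$ genuinely has \emph{the same valuation} as $L$. For this I would describe $K$ concretely: solving $(w,x')=0$ for $w=cx+\ell$ ($\ell\in L$) forces $c=t(\ell,y)$, so $K=\{\ell+t(\ell,y)x:\ell\in L\}$, an isomorphic copy of $L$ as a module on which the transported form is $(\ell_{1},\ell_{2})_{K}=(\ell_{1},\ell_{2})_{L}+t^{2}a\,(\ell_{1},y)(\ell_{2},y)$. Taking $\ell=y$ recovers $y'^{2}=b(1+t^{2}ab)$, consistent with the rank-one factor of $K$. The delicate point is the valuation bookkeeping for the perturbation term: its valuation is at least $v(t^{2}a)+2v(L)$, and I would show this exceeds $v(L)$ by using $v(t^{2}ab)\geq v(r)>0$ together with the fact that $y$ realizes the minimal norm valuation $v(b)=v(L)$ of $L$; this makes the perturbation strictly coarser than the minimum $v(L)$ of the original form, so that minimum survives and $v(K)=v(L)$. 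Isolating exactly which minimality hypothesis on $y$ (and which inequality among $v(r)$, $v(b)$, $v(L)$) is needed to guarantee $v(K)=v(L)$ is the main obstacle; all remaining verifications are routine substitutions into the two applications of Lemma \ref{sublat}.
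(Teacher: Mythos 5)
There is a genuine gap: your entire construction hinges on the assumption, stated in your first paragraph, that $a=x^{2}\in R^{*}$. That is not a hypothesis of the proposition. The statement only requires $x$ to be a \emph{primitive} element of the unimodular lattice $M$, and the intended applications (listed right after the proposition) include taking $x$ inside an indecomposable component $M_{\alpha,\beta}$ with $\alpha$ and $\beta$ in $I_{0}$, so that $a=\alpha\in I_{0}$; the hypothesis $t^{2}ab \in r+I_{v(r)}$ is perfectly consistent with $v(a)>0$ since $v(b)\geq v(L)>0$ forces $v(r)>0$ anyway. When $a\notin R^{*}$, Lemma \ref{sublat} does \emph{not} give $M=Rx\oplus(Rx)^{\perp}$ (unimodularity of $M$ produces $z$ with $(z,x)\in R^{*}$, so $a$ fails to divide $(z,x)$), your second splitting $Rx\oplus L=Rx'\oplus K$ fails for the same reason, and your explicit description of $K$ as $\{\ell+t(\ell,y)x\}$ comes from cancelling $a$ in the equation $ca=ta(\ell,y)$, which is not legitimate if $a=0$. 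So the argument only proves a special case. The paper avoids this entirely: it never splits off $Rx$, but instead perturbs a full basis, setting $z_{i}=x_{i}+t(x_{i},x_{n})y_{m}$ and $w_{j}=y_{j}-t(y_{j},y_{m})x_{n}$ (with $x_{n}=x$, $y_{m}=y$); orthogonality $z_{i}\perp w_{j}$ is a direct computation needing no unit, and the new discriminant $(1+t^{2}ab)(1+r)$ comes from the Matrix Determinant Lemma applied to the rank-one update $D+t^{2}b\,d_{n}d_{n}^{t}$ of the Gram matrix. Where your rotation does apply ($a\in R^{*}$), it is a correct and slightly cleaner variant, producing the same $K$ up to the sign of $t$ and a discriminant representative $\frac{1+r}{1+t^{2}ab}$ that differs from the paper's by the square $(1+t^{2}ab)^{2}$.

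The issue you single out as ``the hard part'' --- that $v(K)=v(L)$ --- is actually immediate and needs no minimality hypothesis on $y$: the perturbation term $t^{2}a(\ell_{1},y)(\ell_{2},y)$ has valuation at least $2v(L)$, which is strictly larger than $v(L)$ simply because $v(L)>0$, so the minimum of the form on $K$ is attained by the unperturbed part and equals $v(L)$. You should redirect the effort you planned to spend there toward removing the unit-norm assumption, for instance by adopting the paper's basis-level perturbation.
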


\begin{proof}
Let $x_{i}$, $1 \leq i \leq n$ be a basis for $M$ giving the discriminant $1+r$
and in which $x_{n}^{2}=a$, and let $y_{j}$, $1 \leq j \leq m$ be a basis for
$L$ with $y_{m}^{2}=b$. Consider the elements $z_{i}=x_{i}+t(x_{i},x_{n})y_{m}$
and $w_{j}=y_{j}-t(y_{j},y_{m})x_{n}$ of $M \oplus L$. One verifies that $z_{i}
\perp w_{j}$ for all $i$ and $j$, and
$(z_{i},z_{j})\equiv(x_{i},x_{j})(\mathrm{mod\ }I_{0})$ since $y_{m}^{2}=b \in
I_{0}$ as $v(L)>0$. We take $N$ to be the lattice spanned by $z_{i}$, $1 \leq i
\leq n$ and $K$ to be the lattice generated by $w_{j}$, $1 \leq j \leq m$. If
$D$ is the matrix defined by $d_{ij}=(x_{i},x_{j})$ (with determinant $1+r$ by
assumption) then the matrix giving the discriminant of $N$ is
$D+t^{2}bd_{n}d_{n}^{t}$, where $d_{n}$ is the last column of $D$. By the Matrix
Determinant Lemma, the determinant of this matrix is $(1+t^{2}b \cdot
d_{n}^{t}D^{-1}d_{n})\det D$, and since $D^{-1}d_{n}$ is the $n$th standard
basis vector and $d_{nn}=x_{n}^{2}=a$, this expression reduces to
$(1+t^{2}ab)(1+r)$. Writing the latter expression as $1+s$ with
$s=r+t^{2}ab+rt^{2}ab$ and observing that $r$ and $t^{2}ab$ are in $I_{0}$,
$t^{2}ab \in r+I_{v(r)}$, and $\mathbb{F}$ has characteristic $2$, we find that
$v(s)>v(r)$. For elements of the lattice $K$ the expression $(w_{j},w_{k})$
differs from $(y_{j},y_{k})$ by $at(y_{j},y_{m})(y_{k},y_{m})$ of valuation at
least $2v(L)$. Evaluating the discriminant of $K$ can be carried out in the same
way as the discriminant of $N$ (since $L$ is non-degenerate, the corresponding
matrix has non-zero determinant hence can be inverted over $\mathbb{K}$). This
completes the proof of the proposition.
\end{proof}

Proposition \ref{umod+I0} can be used in various manners in order to convert a
component in a Jordan decomposition of a lattice into a more canonical one,
while affecting only the Jordan  components with higher valuations. As two
possible examples, consider the following: A lattice element with norm $1+r$
(contained in a unimodular component) with $v(r)$ maximal can be taken to an
element of norm $1+s$ with $v(s)>v(r)$, and a lattice $M_{\alpha,\beta}$ can be
altered in this way to a lattice $M_{\gamma,\delta}$ with generalized Arf
invariant of higher valuation. In some cases, the class of minimal norms can
also change to a class with larger valuation. All the transformations
presented in \cite{[J2]} over $R=\mathbb{Z}_{2}$ are special cases of
Propositions \ref{rep1} and \ref{umod+I0}.

\noindent\textsc{Fachbereich Mathematik, AG 5, Technische Universit\"{a}t
Darmstadt, Schlossgartenstrasse 7, D-64289, Darmstadt, Germany}

\noindent E-mail address: zemel@mathematik.tu-darmstadt.de

\end{document}